\documentclass[a4paper,reqno,11pt]{amsart}
\usepackage{mathrsfs}
\usepackage{txfonts}
\usepackage{amssymb}
\usepackage{amsfonts,amsmath,amsthm,amssymb,stmaryrd,esint,extarrows,chemarrow,esint,color}
\usepackage[numbers,sort&compress]{natbib}
\usepackage{booktabs,tabularx}

\newtheorem{Theorem}{Theorem}[section]
\newtheorem{Lemma}{Lemma}[section]
\newtheorem{Proposition}{Proposition}[section]
\newtheorem{Remark}{Remark}[section]

\numberwithin{equation}{section}
\usepackage[left=1 in, right=1 in,top=1 in, bottom=1 in]{geometry}

\def\XXint#1#2#3{{\setbox0=\hbox{$#1{#2#3}{\int}$ }
\vcenter{\hbox{$#2#3$ }}\kern-.6\wd0}}

\DeclareMathOperator{\trace}{tr}
\DeclareMathOperator{\diver}{div}
\DeclareMathOperator{\cur}{curl}
\def\r3{\mathbb{R}^3}
\newcommand{\na}{\nabla}
\newcommand{\la}{\lambda}
\newcommand{\va}{\varepsilon}
\newcommand{\de}{\delta}
\newcommand{\De}{\Delta}
\newcommand{\al}{\alpha}
\newcommand{\pa}{\partial}
\newcommand{\ga}{\gamma}

\def\le{\leqslant}
\def\ge{\geqslant}

\let\sss= \scriptscriptstyle

\makeatletter
\@namedef{subjclassname@2020}{\textup{2020} Mathematics Subject Classification}
\makeatother

\allowdisplaybreaks
\begin{document}
\bibliographystyle{plain}

\title[Elastic Navier-Stokes-Poisson equations]{On the half-space or exterior problems of the $3D$ compressible elastic Navier-Stokes-Poisson equations}
\author{Wenpei Wu}
\address{Academy of Mathematics and Systems Science, Chinese Academy of Sciences, Beijing, 100190, China.}
\email[W.P. Wu]{wenpeiwu16@163.com}

\author{Yong Wang}
\address{South China Research Center for Applied Mathematics and Interdisciplinary Studies, School of Mathematical Sciences, South China Normal University, Guangzhou, 510631, China.}
\email[Y. Wang]{wangyongxmu@163.com}
\thanks{Corresponding author: Yong Wang.}

\begin{abstract}
We study the three-dimensional compressible elastic Navier-Stokes-Poisson equations induced by a new bipolar viscoelastic model derived here, which model the motion of the compressible electrically conducting fluids. The various boundary conditions for the electrostatic potential including the Dirichlet and Neumann boundary conditions are considered. By using a unified energy method, we obtain the unique global $H^2$ solution near a constant equilibrium state in the half-space or exterior of an obstacle. The elasticity plays a crucial role in establishing the $L^2$ estimate for the electrostatic field.
\end{abstract}
\keywords{Elastic Navier-Stokes-Poisson equations; Half-space problems; Exterior problems; Global solution.}
\subjclass[2020]{76A10; 35Q35; 35G31.}

\maketitle

\setcounter{tocdepth}{3}

\pagenumbering{Roman}
\tableofcontents

\pagenumbering{arabic}

\section{Introduction}

In this paper, we focus on a half-space or an exterior problem, in which the considered domain is occupied by a compressible electrically conducting fluid. The half-space or exterior problems in fluid dynamics have attracted a lot of attention in mathematics and physics. In particular, the exterior problems are involved with two kinds of important physical flows: the motion of a rigid body through a fluid and the flow past an obstacle, cf. \cite{Berdichevsky2009,Galdi2011}. The unboundedness could be seen as an idealization of large fluid domains in the real world. To raise issues that we care about, we propose a three-dimensional damped elastic Navier-Stokes-Poisson system derived in Appendix \ref{appendix}. The derived system (see \eqref{1.1} and \eqref{14-20220629}) incorporates four features: viscosity, elasticity, electrostaticity and friction. The fixed physical boundary provides a friction which induces a damping effect. So a friction-based damping term $\al\rho u$ appears in the motion equation as below.

To be precise, we will study the half-space or exterior problems of the compressible damped elastic Navier-Stokes-Poisson equations:
\begin{align}\label{1.1}
\begin{cases}
\rho_t +\diver (\rho u)=0,\\
(\rho u)_t+\diver (\rho u\otimes u)+\na P(\rho)=\mu \De u+(\mu+\lambda)\na \diver u+c^2\diver (\rho \mathbb{F}\mathbb{F}^{T})+\rho\na\phi-\al\rho u,\\
\mathbb{F}_t+u\cdot\na \mathbb{F}=\na u\mathbb{F},\\
\De\phi=\rho-\rho_{+},\quad\quad\quad\quad\quad\quad\quad\quad\quad\quad\quad
\quad\quad\quad\quad\quad\quad\quad\quad (x,t)\in\Omega\times \mathbb{R}^{+}.
\end{cases}
\end{align}
Here, $\Omega\subset\r3$ is a half-space $\mathbb{R}^3_+=\{x=(x_1,x_2,x_3)\in\mathbb{R}^3:x_3>0\}$ or an exterior domain $\mathbb{R}^3\backslash \overline{D}$ ($D\subset \mathbb{R}^3$ is a bounded domain with boundary $\pa D$ and $\overline{D}=D\cup\pa D$). Then we denote the boundary of $\Omega$ by
\begin{align*}
\pa\Omega=\{x_3=0\}
\end{align*}
if $\Omega=\mathbb{R}^3_+$ and
\begin{align*}
\pa\Omega=\pa D
\end{align*}
if $\Omega=\mathbb{R}^3\backslash \overline{D}$. We supplement the system \eqref{1.1} with the initial and boundary conditions
\begin{align}\label{1.2'}
(\rho, u, \mathbb{F})(x,t)\mid_{t=0}=(\rho_0, u_0, \mathbb{F}_0)(x), \quad x\in\Omega
\end{align}
and
\begin{align}\label{1.2}
\begin{cases}
\mbox{no-slip\ boundary\ condition:}\qquad\quad u\mid_{\pa\Omega}=0, & t>0;\\
\mbox{Dirichlet\ or\ Neumann\ condition:}\quad\phi\mid_{\pa\Omega}=0\quad
\mbox{or}\quad\na\phi\cdot\nu\mid_{\pa\Omega}=0, & t>0;\\
\mbox{far-field\ behaviors:}\quad(\rho,u,\mathbb{F},\phi)(x,t)\to (1,0,\mathbb{I},0) \quad as \quad x\to \infty, & t>0.
\end{cases}
\end{align}
where the symbol $\nu$ denotes the unit outward normal to $\pa\Omega$ and $\mathbb{I}$ is the identity matrix. The unknown variables $\rho=\rho(x,t)>0$, $u=u(x,t)\in\r3$, $\mathbb{F}=\mathbb{F}(x,t)\in \mathbb{M}^{3\times3}$ (the set of $3\times3$ matrices with positive determinants) denote the density, the velocity and the deformation gradient of viscoelastic electrically conducting fluids, respectively. The variable $\mathbb{F}$ is also called the deformation tensor or deformation matrix in some references. The electrostatic potential $\phi=\phi(x,t)$ is coupled with the density through the Poisson equation. The pressure $P=P(\rho)$ is a smooth function satisfying $P'(\rho)>0$ for $\rho>0$.  Two constant viscosity coefficients $\mu$ and $\lambda$ satisfy the usual physical constraints $\mu>0$ and $3\la+2\mu\ge0$. The constant $\al>0$ is the friction coefficient. In the motion of fluids, we use $\rho_{+}$ to model a positive background charge distribution. In this paper, we shall consider two cases:
\begin{enumerate}
  \item $\rho_{+}\equiv \bar{\rho}>0$;
  \item $\rho_{+}=e^{-\phi}$.
\end{enumerate}
The above case $(2)$ is known to be the Boltzmann distribution. For simplicity, we only deal with the case $(1)$ $\rho_{+}\equiv \bar{\rho}$ later. In fact, the case $(2)$ can be solved similarly and some comments or explanations about this will be given where appropriate. Without loss of generality, we assume $\bar\rho=1$.

From a PDE point of view, the system \eqref{1.1} is closely related to two systems: one is called the compressible viscoelastic system (also called elastic Navier-Stokes system)
\begin{align}\label{viscoelastic-system}
\begin{cases}
\rho_t +\diver (\rho u)=0,\\
(\rho u)_t+\diver (\rho u\otimes u)+\na P(\rho)=\mu \De u+(\mu+\lambda)\na \diver u+c^2\diver  (\rho \mathbb{F}\mathbb{F}^{T}),\\
\mathbb{F}_t+u\cdot\na \mathbb{F}=\na u\mathbb{F};
\end{cases}
\end{align}
the other is called the compressible Navier-Stokes-Poisson system
\begin{align}\label{NSP-system}
\begin{cases}
\rho_t +\diver (\rho u)=0,\\
(\rho u)_t+\diver (\rho u\otimes u)+\na P(\rho)=\mu\De u+(\mu+\lambda)\na \diver u+\rho\na\phi-\al\rho u,\\
\Delta \phi=\rho-\bar\rho.
\end{cases}
\end{align}
In the following, we will review the research status about the above two systems.

The viscoelastic system \eqref{viscoelastic-system} is used to describe the macroscopic dynamics of a kind of Oldroyd-B type non-Newtonian fluids. The incompressible version of \eqref{viscoelastic-system} was first introduced in \cite{Lin-Liu-Zhang2005} and then the compressible model \eqref{viscoelastic-system} was considered in \cite{Qian-Zhang2010}. Unlike other kinds of Oldroyd-B type models investigated in \cite{Oldroyd1950,Chemin-Masmoudi2001,Elgindi-Rousset2015,La2020,DiIorio-Marcati-Spirito2020,Elgindi-Masmoudi2020}, the characteristic of this model \eqref{viscoelastic-system} is that it uses the deformation gradient $\mathbb{F}$ to characterize the internal elasticity of fluids. Since then, a lot of works on the well-posedness or asymptotics of solutions to that model were made by many researchers. As for the compressible system \eqref{viscoelastic-system}, Hu and Wang \cite{Hu-Wang2010} proved the existence and uniqueness of the local-in-time large strong solution to the Cauchy problem. Later, Hu and Wu \cite{Hu-Wu2013} obtained the unique global-in-time small strong solution by constructing the suitable a priori estimates, meanwhile, they showed the optimal time-decay rates of the solution and its lower-order derivatives by using semigroup methods developed in \cite{Ponce1985,Hoff-Zumbrun1995}, where the initial data belong to $L^1(\mathbb{R}^3)$. Under the weaker assumption that the initial data lie in $\dot{B}^{-3/2}_{2,\infty}(\mathbb{R}^3)$ (noting that $L^1(\mathbb{R}^3)\subset\dot{B}^{-3/2}_{2,\infty}(\mathbb{R}^3)$), Wu et al. \cite{Wu-Gao-Tan2017} used a pure energy method firstly introduced in \cite{Guo-Wang2012} to show the optimal time-decay rates of arbitrary spatial derivatives but the highest-order. It should be noted that the optimal time-decay rate of the highest-order spatial derivatives can be obtained by using a time-weighted argument as in \cite{Gao-Li-Yao2021}. Recently, Hu and Zhao \cite{Hu-Zhao2020-3,Hu-Zhao2020-2} proved the global existence of classical solutions to the system \eqref{viscoelastic-system} with zero shear viscosity but positive volume viscosity (namely, $\mu=0$ and $\la>0$). For the initial-boundary value problem of the compressible system \eqref{viscoelastic-system}, Qian \cite{Qian2011} obtained the unique global-in-time small strong solution and then Chen and Wu \cite{Chen-Wu2018} showed the exponential decay rates. About more related results, readers can refer to \cite{Qian-Zhang2010,Li-Wei-Yao2016,
Jiang-Jiang2021,Jiang-Jiang-Zhan2020,Han-Zi2020} and the literature therein. As for the incompressible case of the system \eqref{viscoelastic-system}, we refer the readers to \cite{Lin-Liu-Zhang2005,Chen-Zhang2006,Lin-Zhang2008,Lei-Liu-Zhou2007,Lei-Zhou2005,
Lei2010,Hu-Lin2016,Zhang-Fang2012,Jiang-Wu-Zhong2016,Jiang-Jiang-Wu2017,Lei-Liu-Zhou2008,Cai-Lei-Lin-Masmoudi2019} and the references cited therein. As mentioned above, many research developments have been achieved, however, the global existence of the large strong (or smooth) solution even in two dimensions is still open whether for the incompressible or compressible case, see open problems listed in \cite{Hu-Lin-Liu2018}.

For the Navier-Stokes-Poisson system \eqref{NSP-system} with $\al=0$, there are a wealth of research results on the Cauchy problem, cf. \cite{Li-Matsumura-Zhang2010,Wang-Wu2010,Wang2012,Tan-Yang-Zhao-Zou2013,Wang-Wang2015,Bie-Wang-Yao2017} and the references therein. Compared with the Navier-Stokes equations \cite{Matsumura-Nishida1979}, it is proved that the electrostatic field plays a good role in the global well-posedness and large-time behaviors of solutions, see \cite{Wang2012}. However, for its initial-boundary value problem on a bounded domain, the situation is totally different since the Poisson term $\rho\na\phi$ brings essential difficulties when making energy estimates. The unmanageable boundary integrals will appear when integrating by parts. In fact, the energy estimates depend on the type of the boundary condition of the electrostatic potential, say Dirichlet, Neumann, or other else. Recently, based on the method introduced in \cite{Matsumura-Nishida1983} together with a new Stokes-type estimate, the Neumann problem on a bounded domain for the system \eqref{NSP-system} with $\al=0$ has been solved by Liu and Zhong \cite{Liu-Zhong2021}, while, the Dirichlet problem is still open. Mathematically, the main difficulty in the case of the Dirichlet boundary condition is the lack of a priori estimates on $\na\phi$, which in turn is due to the lack of the control for the boundary integral terms involving the electrostatic potential. When introducing the elasticity in system \eqref{NSP-system} with $\al=0$, things changed as stated in comments below.

{\it Comments on the elasticity}. The elasticity indeed brings a good effect. When the Navier-Stokes-Poisson system \eqref{NSP-system} with $\al=0$ is coupled with the transport equations for the deformation gradient $\mathbb{F}$, it becomes the elastic Navier-Stokes-Poisson system. Then the elasticity is proved to be helpful for solving the initial-boundary value problems on bounded domains regardless of that the electrostatic potential possesses Dirichlet-type, Neumann-type or mixed Dirichlet-Neumann boundary conditions. With the help of the elastic variable, that is, the deformation $\varphi=X(x,t)-x$ not the deformation gradient $\mathbb{F}$ itself, the authors \cite{Wang-Wu2021,Wang-Shen-Wu-Zhang2022} established the effective dissipation estimates of $\na\phi$ by making good use of a reduction for the original system and a relation between the electrostatic potential and the deformation. Hence, they showed the global well-posedness and exponential stability of the initial-boundary value problems on a $3D$ bounded domain. By the way, the stabilizing effect of the elasticity in the Rayleigh-Taylor instability was verified in \cite{Jiang-Jiang-Wu2017,Jiang-Wu-Zhong2016}.

{\it Comments on the friction-based damping term $\al\rho u$ in \eqref{1.1}}. From a modeling point of view, as derived in Appendix \ref{appendix}, the damping effect induced by the friction shall occur in fluid dynamics due to the relative motion to the fixed physical boundary. Thus the friction-based damping term $\al\rho u$ in \eqref{1.1} has its physical significance in the motion of a fluid across the surface of a body, cf. \cite{Persson2000}. From a technical point of view, the damping term together with the viscous terms provide a strong dissipation mechanism so that the global well-posedness of the system \eqref{1.1} is available in this paper. So, is it possible to remove the damping term? So far it is open even for the exterior or half-space problem of the three-dimensional Navier-Stokes-Poisson equations, which can be seen in \cite{Liu-Luo-Zhong2020}. The main obstacle arises from the electrostatic field $E=-\na\phi$. It is key to derive the $L^2$ dissipation estimate for $E$ or $\na\phi$. For this purpose, one needs to establish the $L^2$ dissipation estimate for $u_t$, which cannot be realized for the exterior or half-space problem without the damping term. Note that the damping term can be removed for the problem on a bounded domain, cf. \cite{Liu-Zhong2021,Wang-Wu2021,Wang-Shen-Wu-Zhang2022}. The point in the case of bounded domains is that the viscous terms additionally possess a damping effect with the help of Poincar\'{e} inequality, that is, $\|u\|_{L^2}\lesssim\|\na u\|_{L^2}$.

{\it Our main contributions in this paper}. We develop energetic variational approaches to derive a bipolar viscoelastic system in Appendix \ref{appendix} which induces the unipolar system \eqref{1.1} mainly considered in this paper. It is natural to see that the positive background charge distribution $\rho_{+}$ has two states: the constant distribution $\rho_{+}=\bar\rho$ and the Boltzmann distribution $\rho_{+}=e^{-\phi}$. We develop a unified energy method to solve the $3D$ initial-boundary value problem of the hyperbolic-parabolic-elliptic system (say \eqref{1.1}) on a half-space or an exterior domain under various boundary conditions including the Dirichlet-type, Neumann-type and the mixed type. Our methods used in this paper are clean and effective, which will shed light on the problems for complex systems of partial differential equations under a variety of physical boundary conditions.

In short, the electrostatic field together with the elasticity bring a good effect for the Cauchy problem (cf. \cite{Tan-Wang-Wu2020}) or the initial-boundary problem (cf. \cite{Wang-Wu2021,Wang-Shen-Wu-Zhang2022}) on bounded domains but not for the exterior or half-space problem. To overcome the bad effects mainly from the electrostatic field, the damping term $\al\rho u$ in the system \eqref{1.1} is needed for now. How to solve the exterior or half-space problem \eqref{1.1}--\eqref{1.2} with $\al=0$? It will be a challenging problem in mathematics. To clarify these related results mentioned above, we make a comparative analysis in the following Table $1$.

\begin{table}[htbp]
\caption{Comparative Analysis}
\centering
\begin{tabular}{p{60pt}p{85pt}p{125pt}p{125pt}}
\toprule
\small Equations & \small Cauchy problems & \small Problems on bounded domains & \small Problems on exterior domains\\
\midrule
\eqref{viscoelastic-system} & cf. \cite{Qian-Zhang2010,Hu-Wang2010,Hu-Wu2013} &  cf. \cite{Chen-Wu2018} & cf. \cite{Qian2011} \\
\eqref{NSP-system}($\al>0$) & -- & -- & cf. \cite{Liu-Luo-Zhong2020} \\
\eqref{NSP-system}($\al=0$) & cf. \cite{Li-Matsumura-Zhang2010} & cf. \cite{Liu-Zhong2021} & Unsolved\\
\eqref{1.1}($\al>0$) & -- & -- & Solved in this paper \\
\eqref{1.1}($\al=0$) & cf. \cite{Tan-Wang-Wu2020} & cf.  \cite{Wang-Wu2021,Wang-Shen-Wu-Zhang2022} & Unsolved\\
\bottomrule
\end{tabular}
\end{table}

Note that all the results for $\al=0$ in the above table still hold for $\al>0$.

In this paper, we shall study the initial-boundary value problem \eqref{1.1}--\eqref{1.2} with constraints
\begin{align}\label{compatible'}
\begin{cases}
\rho\det \mathbb{F}=1,\smallskip \\
\mathbb{F}^{lk}\na_l\mathbb{F}^{ij}=\mathbb{F}^{lj}\na_l\mathbb{F}^{ik}.
\end{cases}
\end{align}
These constrains in \eqref{compatible'} are very natural for viscoelastic fluid models.
In fact, the constraints in \eqref{compatible'} hold for all $t>0$ by Lemma \ref{A.3} if they are valid initially. Moreover, noting Lemma \ref{A.3'}, \eqref{compatible'} infers for all $t\ge0$,
\begin{align}\label{compatible''}
\diver  (\rho \mathbb{F}^T)=\na_{j}(\rho \mathbb{F}^{jk})=0.
\end{align}

In the sequel, we state the main results on the existence and uniqueness of the global solution. Note that the deformation gradient $\mathbb{F}$ and the deformation $\varphi$ satisfy the relation $\varphi=\De^{-1}\diver \mathbb{F}^{-1}$ as \eqref{'2.4}.
\begin{Theorem}\label{th1.2}
Let $\Omega\subset\r3$ be a half-space or an exterior domain with a compact boundary $\pa\Omega\in\mathcal{C}^3$. Assume that the initial data $(\rho_{0}-1, u_{0}, \mathbb{F}_{0}-\mathbb{I})\in H^2(\Omega)$ and $\varphi_0:=\De^{-1}\diver \mathbb{F}_0^{-1}\in L^2(\Omega)$  satisfying
\begin{align}\label{compatible}
\begin{cases}
\rho_{0}\det \mathbb{F}_{0}=1,\smallskip\\
\mathbb{F}_{0}^{lk}\na_l\mathbb{F}_{0}^{ij}=\mathbb{F}_{0}^{lj}\na_l\mathbb{F}_{0}^{ik},\smallskip \\
\mathbb{F}_0=(\mathbb{I}+\na\varphi_0)^{-1}
\end{cases}
\end{align}
and for some small constant $\de_{0}>0$,
\begin{align*}
\|(\rho_{0}-1, u_{0}, \mathbb{F}_{0}-\mathbb{I})\|_{H^2}+\|\varphi_0\|_{L^2}<\de_{0}.
\end{align*}

(i) Then the problem \eqref{1.1}--\eqref{1.2} with $\rho_{+}=1$ admits a unique global solution $(\rho, u, \mathbb{F}, \na\phi)$ satisfying
\begin{align}\label{regularity-1}
\begin{cases}
\rho\in \mathcal{C}([0,+\infty);H^2(\Omega)),\quad \rho_{t}\in \mathcal{C}([0,+\infty);H^1(\Omega)),\\
u\in \mathcal{C}([0,+\infty);H^2(\Omega)\cap H^1_0(\Omega))\cap L^2([0,+\infty);H^3(\Omega)),\\
u_{t}\in \mathcal{C}([0,+\infty);L^2(\Omega))\cap L^2([0,+\infty);H^1_0(\Omega)),\\
\mathbb{F}\in \mathcal{C}([0,+\infty);H^2(\Omega)),\quad \mathbb{F}_{t}\in \mathcal{C}([0,+\infty);H^1(\Omega)),
\end{cases}
\end{align}
and
\begin{align*}
\na\phi\in \mathcal{C}([0,+\infty);H^3(\Omega)),\quad \na\phi_{t}\in \mathcal{C}([0,+\infty);H^2(\Omega)).
\end{align*}
Moreover, it holds that for all $t\ge0$,
\begin{align*}
\|(\rho-1, u, \mathbb{F}-\mathbb{I})(t)\|_{H^2}+ \|\na\phi(t)\|_{H^3}+\|(\rho_{t},u_{t},\mathbb{F}_t,\na\phi_t)(t)\|_{L^2}
+\|\varphi(t)\|_{L^2}\le C_0.
\end{align*}

(ii) Then the problem \eqref{1.1}--\eqref{1.2} with $\rho_{+}=e^{-\phi}$ admits a unique global solution $(\rho, u, \mathbb{F}, \phi)$ satisfying the same regularity \eqref{regularity-1}
and
\begin{align*}
\phi\in \mathcal{C}([0,+\infty);H^4(\Omega)),\quad \phi_{t}\in \mathcal{C}([0,+\infty);H^3(\Omega)).
\end{align*}
Moreover, it holds that for all $t\ge0$,
\begin{align*}
\|(\rho-1, u, \mathbb{F}-\mathbb{I})(t)\|_{H^2}+\|\phi(t)\|_{H^4}
+\|(\rho_{t},u_{t},\mathbb{F}_t,\phi_t,\na\phi_t)(t)\|_{L^2}+\|\varphi(t)\|_{L^2}\le C_0.
\end{align*}

The above $C_0>0$ depends only on the initial data.
\end{Theorem}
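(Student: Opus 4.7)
The plan is to prove Theorem \ref{th1.2} by the standard local-existence plus a priori estimate continuation argument. First I would reformulate the system around the equilibrium state $(1,0,\mathbb{I},0)$ by setting $\sigma=\rho-1$ and $E=\mathbb{F}-\mathbb{I}$, linearizing $P(\rho)$, and treating the quadratic contributions as source terms. A local-in-time strong solution in the regularity class \eqref{regularity-1} would be produced by a standard iteration/fixed-point scheme for the hyperbolic-parabolic coupling $(\sigma,u,E)$, solving the Poisson problem for $\phi$ at each step with the prescribed Dirichlet or Neumann data, and using compatibility of the initial boundary conditions (implied by $u_0|_{\partial\Omega}=0$). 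The constraints \eqref{compatible} are preserved along the flow by Lemma \ref{A.3}, so \eqref{compatible''} is available throughout.

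The heart of the proof is a closed set of a priori estimates under the smallness ansatz $\sup_{[0,T]}\bigl(\|(\sigma,u,E)\|_{H^2}+\|\nabla\phi\|_{H^3}+\|\varphi\|_{L^2}\bigr)\le \delta$ for some small $\delta$. I would proceed in layers. Step (a): a basic energy identity obtained by multiplying the momentum equation by $u$, the continuity equation by $P'(1)\sigma$, the $\mathbb{F}$-equation by $c^2(\mathbb{F}-\mathbb{I})$, and adding; the $\rho\nabla\phi\cdot u$ term is absorbed using $\Delta\phi=\sigma$ and integration by parts, with boundary terms vanishing thanks to $u|_{\partial\Omega}=0$ together with either $\phi|_{\partial\Omega}=0$ or $\nabla\phi\cdot\nu|_{\partial\Omega}=0$. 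The damping $\alpha\rho u$ together with the viscous terms delivers a dissipation of $\|u\|_{H^1}^2$ without invoking Poincaré. Step (b): higher-order energy estimates for $\nabla$ and $\nabla^2$ obtained by tangential and normal differentiation, handling the boundary carefully on $\partial\Omega$; the structural identity \eqref{compatible''} is used to cancel problematic linear terms in the elasticity contribution $c^2\operatorname{div}(\rho\mathbb{F}\mathbb{F}^T)$. Step (c): the dissipation estimate for $\sigma$ is extracted by testing the momentum equation against $\nabla\sigma$ and using the standard effective-viscous-flux trick, combined with $\rho\det\mathbb{F}=1$ to couple $\sigma$ to $\operatorname{tr} E$. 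Step (d): to get $\|u_t\|_{L^2}^2$ dissipation I would differentiate the momentum equation in time, test with $u_t$, and exploit the damping $\alpha\rho u$ to control terms that cannot be absorbed on exterior/half-space via Poincaré.

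The hardest step is the $L^2$ dissipation estimate for $\nabla\phi$, because on an unbounded domain the Dirichlet or Neumann boundary data for $\phi$ do not directly give decay and integration by parts in $\rho\nabla\phi\cdot u$ is the usual stumbling block. The elasticity is the key unlock: the deformation $\varphi=\Delta^{-1}\operatorname{div}\mathbb{F}^{-1}$ introduced via \eqref{'2.4} satisfies, together with $\rho\det\mathbb{F}=1$, a relation that expresses $\sigma$ as (essentially) a divergence of a deformation quantity modulo quadratic errors. Since $\Delta\phi=\sigma=\rho-1$, this lets me represent $\nabla\phi$ through $\varphi$ and control $\|\nabla\phi\|_{L^2}$ by $\|\varphi\|_{L^2}+\|(\sigma,E)\|_{H^1}\cdot\delta$, so that the bad linear contribution on the boundary is avoided entirely. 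Propagating a transport equation for $\varphi$ (derived from the $\mathbb{F}$-equation) gives the closed $\|\varphi\|_{L^2}$ bound. Combining Steps (a)--(d) with this elasticity-based bound and using Sobolev/interpolation inequalities to absorb all nonlinear remainders into the dissipation yields the uniform-in-time estimate
\[
\|(\sigma,u,E)(t)\|_{H^2}^2+\|\nabla\phi(t)\|_{H^3}^2+\|\varphi(t)\|_{L^2}^2+\int_0^t\mathcal{D}(s)\,ds\le C_0,
\]
with $\mathcal{D}$ the full dissipation functional; higher regularity for $\phi$ then follows from elliptic regularity applied to $\Delta\phi=\sigma$.

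Finally, uniqueness follows from a standard energy estimate for the difference of two solutions, and the continuation argument promotes the local solution to a global one once $\delta_0$ is chosen so that the a priori bound improves the ansatz. For case (ii) with $\rho_+=e^{-\phi}$, the Poisson equation becomes $\Delta\phi=\rho-e^{-\phi}$, which has the same principal part plus a good sign from linearization $-\phi+\text{h.o.t.}$; this extra linear term actually improves the $L^2$ control on $\phi$ itself and the remaining analysis runs in parallel, yielding the additional regularity $\phi\in\mathcal{C}([0,\infty);H^4)$. I expect the estimate of $\nabla\phi$ via the elastic variable $\varphi$ to be the decisive technical point, since everything else is a delicate but essentially standard energy bookkeeping.
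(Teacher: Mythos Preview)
Your overall architecture (local existence, a priori estimates under a smallness bootstrap, continuation) matches the paper, and you correctly identify the $L^2$ control of $\nabla\phi$ as the crux and the deformation $\varphi$ as the unlock. But the mechanism you propose for that step is not what the paper does, and as stated it leaves a gap.

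You plan to keep $(\sigma,u,E)$ as primary variables and use $\varphi$ only to obtain the \emph{static} bound $\|\nabla\phi\|_{L^2}\lesssim\|\varphi\|_{L^2}$ from $\Delta\phi=\operatorname{div}\varphi+O(|\nabla\varphi|^2)$, then propagate $\|\varphi\|_{L^2}$ by its transport equation. That yields an energy bound on $\nabla\phi$, not a \emph{dissipation} estimate, and nowhere in your steps (a)--(d) do you produce time-integrated control of the full elastic gradient $E\sim-\nabla\varphi$: testing the momentum equation against $\nabla\sigma$ gives at best $\operatorname{div}\varphi$-type dissipation, not $\|\nabla\varphi\|_{L^2}^2$. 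Without $\int_0^t\|\nabla\varphi\|_{H^1}^2$ you cannot absorb the quadratic remainders (which in this problem are of the form $\delta\|\nabla\varphi\|_{H^1}^2$), so the bootstrap does not close. There is also a boundary issue: $\sigma$ carries no boundary condition, so testing against $\nabla\sigma$ on a domain with boundary generates uncontrolled boundary integrals.

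The paper instead \emph{reformulates the whole system} in the unknowns $(u,\varphi,\phi)$, exploiting that $u|_{\partial\Omega}=0$ forces $\varphi|_{\partial\Omega}=0$ via $\varphi_t+u\cdot\nabla\varphi+u=0$. The decisive estimate (Lemma~\ref{le3.3}) tests the reformulated momentum equation against $-\varphi$: the elastic term $\Delta\varphi+\nabla\operatorname{div}\varphi$ yields $\|\nabla\varphi\|_{L^2}^2+\|\operatorname{div}\varphi\|_{L^2}^2$ on the dissipation side, and the Poisson term $-\nabla\phi$ against $-\varphi$ produces $\|\nabla\phi\|_{L^2}^2$ after one integration by parts and $\Delta\phi=\operatorname{div}\varphi+O(|\nabla\varphi|^2)$, with \emph{no} boundary contribution under either the Dirichlet or Neumann condition on $\phi$. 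This single identity delivers simultaneous dissipation of $\nabla\varphi$ and $\nabla\phi$ and is what closes the lower-order loop. At higher order the paper does not use an effective-viscous-flux trick on $\sigma$; it applies Stokes estimates to the combination $u-\varphi/\mu$ (Lemmas~\ref{le3.10'} and \ref{le3.10}) together with a transport-type estimate for the material derivative $\tfrac{D\operatorname{div}\varphi}{Dt}$ (Lemmas~\ref{le3.8'} and \ref{le3.8}), again made possible by the zero boundary data on $u$ and $\varphi$. Your sketch would benefit from moving $\varphi$ from an auxiliary role to the primary elastic unknown and replacing the $\nabla\sigma$ test by the $-\varphi$ test; once you do that, the rest of your outline lines up with the paper.
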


We give some remarks in the following.

\begin{Remark}
We only prove $(i)$ of Theorem \ref{th1.2} in this paper. Then $(ii)$ of Theorem \ref{th1.2} can be obtained by making some obvious modifications in light of \eqref{phi-20220706}.
\end{Remark}

\begin{Remark}
Compared with the initial-boundary value problem \eqref{1.1}--\eqref{1.2} on a bounded domain \cite{Wang-Wu2021}, the case of the half-space or exterior domain is much more complicated. This is because we cannot directly use Poincar\'{e}'s inequality to get the dissipation estimate of the velocity field $u$ itself.
\end{Remark}

\begin{Remark}
Theorem \ref{th1.2} still holds if the boundary condition for the electrostatic potential $\phi$ in \eqref{1.2} is replaced by the Dirichlet-Neumann mixed boundary condition
\begin{align*}
\phi\mid_{S_1}=0,\quad \na\phi\cdot\nu\mid_{S_2}=0,\quad t>0,
\end{align*}
where $S_1\cup S_2=\pa\Omega$ and $S_1\cap S_2=\emptyset$.
Thus Theorem \ref{th1.2} still holds for an annulus with different conditions for $\phi$ on inner and outer boundaries with the help of Poincar\'{e}'s inequality. Moreover, the exterior domain $\Omega$ in Theorem \ref{th1.2} can be more general:
\begin{align*}
\Omega=\r3\backslash(\cup_{i=1}^N\overline{D_i}),\quad D_i\subset\r3,\quad i=1,2,\dots,N,
\end{align*}
where $D_i$ $(i=1,2,\dots,N)$ are bounded domains with $C^3$-smooth boundary $\pa D_i$, respectively. Note that $\pa\Omega=\cup_{i=1}^N\pa D_i$ consists of $N$ disjoint components.
\end{Remark}

\begin{Remark}
The viscoelastic two-fluid system \eqref{14-20220629} will be studied in a forthcoming paper.
\end{Remark}

\noindent{\textbf{Notation}}. In this paper, we use $a\lesssim b$ if $a\le Cb$ for a generic constant $C>0$. The relation $a\sim b$ represents that $a\lesssim b$ and $b\lesssim a$. We denote the gradient operator $\na=\pa_{x}=(\pa_{x_1},\pa_{x_2},\pa_{x_3})^T$ and $\na_j:=\pa_{x_j}$ $(j=1,2,3)$. We denote the Frobenius inner product of two matrices $\mathbb{A},\ \mathbb{B}\in \mathbb{R}^{3\times 3}$ by $\mathbb{A}:\mathbb{B}:=\sum_{i,j=1}^3\mathbb{A}^{ij}\mathbb{B}^{ij}$. Particularly, $|\mathbb{A}|^2=\mathbb{A}:\mathbb{A}$. The usual Lebesgue spaces are represented by $L^p$ ($1\le p\le\infty$) equipped with the norm $\|\cdot\|_{L^p}$. The usual Sobolev spaces are denoted by $W^{k,p}=\{u\in L_{\rm loc}^1: D^{\alpha}u\in L^p \ \mbox{for\ all}\ |\alpha|\le k\}$, with the norm $\|\cdot\|_{W^{k,p}}$. When $p=2$, we simply write $H^{k}=W^{k,2}\ (k=1,2,...)$ equipped with the norm $\|\cdot\|_{H^k}$. And we denote $\widehat{W}^{k,p}=\{u\in L_{\rm loc}^{1}: D^{\alpha}u\in L^p,\ |\alpha|=k\}$. The spaces involving time $L^{p}([0,T];Z)$ represent all the measurable functions $f:[0,T]\to Z$ with the norm $\|f\|_{L^{p}([0,T];Z)}:=(\int_{0}^{T}\|f(t)\|_{Z}^p\,dt)^{1/p}<\infty$ for $1\le p<\infty.$ The spaces involving time $\mathcal{C}([0,T];Z)$ represent all the continuous functions $f:[0,T]\to Z$ with the norm $\|f\|_{\mathcal{C}([0,T];Z)}:=\max\limits_{0\le t\le T}\|f(t)\|_{Z}<\infty.$

The outline of this paper is as follows. In Section 2, we will make a reformulation for the original problem \eqref{1.1}--\eqref{1.2} and list some auxiliary lemmas needed in the following sections. In Section 3, we will derive the unified lower-order energy estimates of solutions for the linearized system in the half-space and the exterior domain with a compact boundary. Then the higher-order energy estimates of solutions for the linearized system in the half-space and the exterior domain are established in Section 4 and Section 5, respectively. In Section 6, we use the energy estimates obtained in Sections 3-5 to establish the a priori estimates and then finish the proof of Theorem \ref{th1.2}. In Appendix \ref{appendix}, a new bipolar viscoelastic system, which describes the dynamics of two kinds of viscoelastic electrically conducting fluids, is derived by using an energetic variational approach, which further infers the unipolar system \eqref{1.1} considered in this paper.

\section{Preliminaries}\label{se2}

In the section, we first make a reformulation of the original problem and then list some auxiliary lemmas frequently used in the next sections.

\subsection{Reformulation}

\

For a given velocity field $u(x(X,t),t)$, the flow map $x(X,t)$ can be determined by the initial value problem:
\begin{align*}
\begin{cases}
\displaystyle\frac{d}{dt}x(X,t)=u(x(X,t),t),\quad t>0,\smallskip\\
x(X,0)=X,
\end{cases}
\end{align*}
where $x,X$ are the current spatial (Eulerian) coordinate and the material (Lagrangian) coordinate for fluid particles, respectively. The deformation gradient $\widetilde{\mathbb{F}}$ can be defined as
\begin{align*}
\widetilde{\mathbb{F}}(X,t)=\frac{\pa x}{\pa X}(X,t)
\end{align*}
in the Lagrangian coordinate, while in the Eulerian coordinate, the deformation gradient $\mathbb{F}(x,t)$ can be written as
\begin{align*}
\mathbb{F}(x(X,t),t)=\widetilde{\mathbb{F}}(X,t).
\end{align*}
Moreover, we can prove that $\mathbb{F}(x,t)$ satisfies the following transport equations:
\begin{align*}
\mathbb{F}_t+u\cdot\na \mathbb{F}=\na u\mathbb{F}
\end{align*}
by using the chain rule directly (see \cite{Renardy-Hrusa-Nohel1987} for instance).

In order to study its well-posedness effectively, we need to reformulate the system (\ref{1.1}). For this purpose, we introduce the inverse of $\mathbb{F}$ denoted by
\begin{align}\label{2.1}
\mathbb{E}:=\frac{\pa X}{\pa x}=\mathbb{F}^{-1},
\end{align}
where $X=X(x,t)$ is the inverse mapping of $x(X,t)$. We define the quantity
\begin{align}\label{2.2}
\mathbb{K}:=\mathbb{E}-\mathbb{I},
\end{align}
which was first proposed by Sideris and Thomases \cite{Sideris-Thomases2004}. Note that the matrix $\mathbb{K}=(\mathbb{K}^{ij})$ is curl free (cf. \cite{Lin-Zhang2008}), so there exists a vector valued function $\varphi=(\varphi^1,\varphi^2,\varphi^3)^T$ such that $(\mathbb{K}^{i1},\mathbb{K}^{i2},\mathbb{K}^{i3})^T=\na\varphi^i$ ($i=1,2,3$). In fact, the function $\varphi$ can be selected as $\varphi(x,t)=X(x,t)-x$, which implies
\begin{align}\label{2.3}
\varphi_t+u\cdot\na\varphi+u=0.
\end{align}
The benefit of introducing $\varphi$ lies in that we can easily deduce $\varphi\mid_{\pa\Omega}=0$ from $u\mid_{\pa\Omega}=0$. By \eqref{2.1}--\eqref{2.2} and the Taylor's expansion, we have
\begin{align}\label{'2.4}
\varphi=\De^{-1}\diver\mathbb{F}^{-1}
\end{align}
and
\begin{align}\label{2.4}
\mathbb{F}=(\mathbb{I}+\mathbb{K})^{-1}=\sum_{i=0}^\infty(-1)^i\mathbb{K}^i=\mathbb{I}-\mathbb{K}+O(|\mathbb{K}|^2)
=\mathbb{I}-\mathbb{\na\varphi}+O(|\mathbb{\na\varphi}|^2),
\end{align}
where the absolute convergence of the matrix series can be guaranteed according to the fact $\|\na\varphi\|_{H^2}\ll1$ in the a priori estimates. By \eqref{compatible''}, the term $\diver (\rho\mathbb{F}\mathbb{F}^T)$ can be decomposed as
\begin{align}\label{2.5}
\na_{j}(\rho \mathbb{F}^{ik}\mathbb{F}^{jk})&=\mathbb{F}^{ik}\na_{j}(\rho \mathbb{F}^{jk})+ \rho\mathbb{F}^{jk}\na_{j}\mathbb{F}^{ik}=\rho\mathbb{F}^{jk}\na_{j}\mathbb{F}^{ik}\nonumber\\
 &=\rho(\delta^{jk}-\mathbb{K}^{jk}+O(|\mathbb{K}|^2))\na_{j}(\delta^{ik}-\mathbb{K}^{ik}+O(|\mathbb{K}|^2))\nonumber\\
 &=-\rho\na_{j}\mathbb{K}^{ij}+\rho O(|\mathbb{K}|)\na O(|\mathbb{K}|)\nonumber\\
 &=-\rho\Delta\varphi^{i}+\rho O(|\na\varphi|)\na O(|\na\varphi|).
\end{align}
Next, combining the fact $\rho\det \mathbb{F}=1$ for all $t\ge0$ in Lemma \ref{A.3} and the determinant expansion theorem,
we have
\begin{align*}
\rho=\det \mathbb{F}^{-1}=\det (\mathbb{I}+\na\varphi)=1+\diver \varphi+\frac{1}{2}\{(\diver \varphi)^2-\trace[(\na\varphi)^2]\}+\det(\na\varphi),
\end{align*}
which implies
\begin{align}\label{2.6}
\rho-1=\diver \varphi+O(|\na\varphi|^2).
\end{align}
For the sake of simplicity, let us take $\al=P'(1)=1.$ Thus, together with \eqref{2.3} and \eqref{2.4}--\eqref{2.6}, we reduce \eqref{1.1}--\eqref{1.2} with $\rho_{+}\equiv1$ to the following problem
\begin{align}\label{2.8}
\begin{cases}
L_1:=u_{t}-\mu\Delta u-(\mu+\lambda)\na\diver u+\Delta\varphi+\na\diver \varphi-\na\phi+u=R_1,\\
L_2:=\varphi_t+u=R_2,\\
\Delta \phi=\diver \varphi+O(|\na\varphi|^2),
\end{cases}
\end{align}
which is subject to the initial and boundary conditions
\begin{align}\label{2.8'}
\begin{cases}
(u,\varphi,\na\phi)(x,t)\mid_{t=0}=(u_0,\varphi_0,\na\phi_0)(x), & x\in\Omega,\\
u\mid_{\pa\Omega}=\varphi\mid_{\pa\Omega}=0, & t>0,\\
\phi\mid_{\pa\Omega}=0\quad\mbox{or}\quad\na\phi\cdot\nu\mid_{\pa\Omega}=0,  & t>0,\\
(u,\varphi,\phi)\to(0,0,0) \quad \mbox{as} \quad x\to \infty.
\end{cases}
\end{align}
The above terms $R_1$ and $R_2$ are defined by
\begin{align}\label{R12}
\begin{cases}
R_1:=-u\cdot\na u-(1-\tfrac{1}{\rho})[\mu \Delta u+(\mu+\lambda) \na\diver u]\\
\qquad\ -(\tfrac{P'(\rho)}{\rho}-1)\na [\diver \varphi+ O(|\na\varphi|^2)]+O(|\na\varphi|)\na O(|\na\varphi|),\\
R_2:=-u\cdot\na \varphi.
\end{cases}
\end{align}
If $\rho_{+}=e^{-\phi}$, then we can replace Eq. $\eqref{2.8}_3$ with
\begin{align}\label{phi-20220706}
\De\phi-\phi=\diver\varphi+O(|\na\varphi|^2)+O(\phi^2),
\end{align}
where used Taylor's formula:
\begin{align*}
e^{-\phi}=1-\phi+O(\phi^2).
\end{align*}

\begin{Remark}
Since we assume that $\|\na\varphi\|_{H^2}\ll1$ in the following a priori estimates, by \eqref{2.6}, we have
$
\frac{1}{2}\le\rho\le\frac{3}{2}.
$
By the Taylor's expansion, we get
$
1-\frac{1}{\rho},\ \frac{P'(\rho)}{\rho}-1\sim \rho-1=\diver \varphi+O(|\na\varphi|^2).
$
\end{Remark}

\subsection{Auxiliary Lemmas}

\

In the following, we list some useful lemmas which are frequently used in later sections. First, we recall the classical Gagliardo-Nirenberg-Sobolev inequality in a half-space or an exterior domain with a compact boundary.

\begin{Lemma}\label{le2.1}
Let $m$ be a positive integer, $p,q,r\in[1,+\infty]$, and $\Omega\subset\mathbb{R}^n$ be a half-space or an exterior domain with a $C^m$ compact boundary $\pa\Omega$. If $u\in L^q(\Omega)\cap\widehat{W}^{m,p}(\Omega)$, then for any integer $k\in[0,m]$,
\begin{align*}
\|\na^ku\|_{L^r(\Omega)}\le C\|\na^mu\|_{L^p(\Omega)}^{\al}\|u\|_{L^q(\Omega)}^{1-\al},
\end{align*}
where
\begin{align*}
\frac{1}{r}-\frac{k}{n}=\al(\frac{1}{p}-\frac{m}{n})+(1-\al)\frac{1}{q},
\end{align*}
with
\begin{align*}
\begin{cases}
\al\in[\frac{k}{m}, 1), & \mbox{if}\ p\in(1,+\infty)\ \mbox{and}\ m-k-\frac{n}{p}\in \mathbb{N}\cup\{0\},\smallskip\\
\al\in[\frac{k}{m}, 1], & \mbox{otherwise}.
\end{cases}
\end{align*}
Moreover, when $k=0$ and $mp<n$, the additional condition is needed: $u(x)\to 0$, as $x\to\infty$ or $u\in L^{\ga}(\Omega)$ for some finite $\ga\ge1$. The above positive constant $C$ depends only on $n$, $m$, $k$, $p$, $q$, $\al$ and $\Omega$.

As a special case often used in this paper, it holds that for $n=3$, $k=0$, $m=1$, $\al=1$, $p=2$,
\begin{align*}
\|u\|_{L^{6}}\le C\|\na u\|_{L^2}.
\end{align*}
\end{Lemma}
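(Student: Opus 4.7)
The plan is to reduce the inequality on $\Omega$ to the classical Gagliardo-Nirenberg inequality on the whole space $\r3$ (or $\mathbb{R}^n$) by means of a bounded extension operator. First I would recall that on $\mathbb{R}^n$ the inequality
\[
\|\na^k u\|_{L^r(\mathbb{R}^n)} \le C\|\na^m u\|_{L^p(\mathbb{R}^n)}^{\al} \|u\|_{L^q(\mathbb{R}^n)}^{1-\al}
\]
under the stated scaling relation between $k,m,n,p,q,r,\al$ is the classical Nirenberg inequality, which can be proved by a standard interpolation argument combined with the Sobolev embedding. The task is therefore reduced to constructing a linear extension $E$ with $Eu\mid_{\Omega}=u$ and the two uniform bounds $\|\na^m Eu\|_{L^p(\mathbb{R}^n)}\lesssim \|\na^m u\|_{L^p(\Omega)}$ and $\|Eu\|_{L^q(\mathbb{R}^n)}\lesssim \|u\|_{L^q(\Omega)}$; the conclusion then follows from the $\mathbb{R}^n$ version applied to $Eu$ together with $\|\na^k u\|_{L^r(\Omega)}\le \|\na^k Eu\|_{L^r(\mathbb{R}^n)}$.

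In the half-space case $\Omega=\mathbb{R}^n_+$, I would construct $E$ by a Seeley-type higher-order reflection across the flat boundary $\{x_n=0\}$, namely $Eu(x',x_n)=\sum_{j=1}^{m+1} a_j\, u(x',-\beta_j x_n)$ for $x_n<0$ with coefficients $a_j,\beta_j$ chosen so that $E$ is bounded on $W^{\ell,p}$ for every $0\le\ell\le m$ and every $p\in[1,\infty]$; this simultaneously controls the top-order homogeneous norm and the $L^q$ norm. For the exterior domain $\Omega=\r3\setminus\overline{D}$ with $\pa\Omega\in\mathcal{C}^m$ compact, I would use a partition of unity $\{\eta_0,\eta_1\}$ with $\eta_0$ supported away from $\pa\Omega$ and $\eta_1$ supported in a bounded tubular neighborhood $U$ of $\pa\Omega$. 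Then $\eta_0 u$ is extended by zero to $\mathbb{R}^n$, while $\eta_1 u$ is extended by applying Stein's total extension theorem in finitely many local charts that flatten $\pa\Omega$; compactness of $\pa\Omega$ ensures the extension constant depends only on $\Omega$.

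The step I expect to be most delicate is making the extension simultaneously respect both the top-order homogeneous $\widehat{W}^{m,p}$ seminorm and the lower-order $L^q$ norm with a constant depending only on $\Omega$, since these two norms scale differently and a careless reflection or cutoff may produce commutator terms of the form $(\na^\ell\eta_i)(\na^{m-\ell}u)$ with $\ell\ge 1$ that are not controlled purely by $\|\na^m u\|_{L^p}$ on unbounded $\Omega$. This is precisely the role of the extra assumption in the borderline regime $k=0$, $mp<n$ that $u(x)\to 0$ at infinity or $u\in L^{\ga}(\Omega)$ for some finite $\ga\ge1$: it supplies the integrability at infinity needed to absorb such lower-order terms and to legitimize interpolating through the full-space inequality. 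Once this is handled, the specific case $n=3$, $k=0$, $m=1$, $\al=1$, $p=2$ yields $\|u\|_{L^6}\lesssim \|\na u\|_{L^2}$ directly from $\frac{1}{6}=\frac{1}{2}-\frac{1}{3}$, which is the form most often invoked in the later energy estimates.
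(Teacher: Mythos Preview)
Your proposal is a reasonable and essentially correct strategy, but it differs from the paper in a basic way: the paper does not prove this lemma at all. Its proof consists of the single sentence ``One refers to \cite{Nirenberg1959} for the half-space and to \cite{Francesca-Paolo2004} for the exterior domain with a compact boundary.'' So there is no argument in the paper to compare against; the authors simply invoke the classical result and the exterior-domain version due to Crispo--Maremonti.

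Your extension-operator sketch is the standard way to recover these inequalities from the $\mathbb{R}^n$ case, and the half-space reflection and the partition-of-unity plus Stein extension near the compact boundary are the right ingredients. One point in your write-up is slightly misdiagnosed, though: the additional hypothesis in the borderline regime $k=0$, $mp<n$ (that $u\to 0$ at infinity or $u\in L^{\ga}$ for some finite $\ga$) is not there to absorb the commutator terms $(\na^\ell\eta_i)(\na^{m-\ell}u)$ arising from the cutoff. Those commutators are supported in the compact transition region where $\na\eta_i\neq 0$, and on that bounded set the intermediate derivatives $\na^{m-\ell}u$ are controlled by $\|\na^m u\|_{L^p}$ and $\|u\|_{L^q}$ via local interpolation or Poincar\'e-type estimates, with no decay assumption needed. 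The extra hypothesis is instead required already at the level of the $\mathbb{R}^n$ inequality itself: when $k=0$ and $mp<n$, a nonzero constant has $\na^m u=0$ but $\|u\|_{L^r}=\infty$, so one must exclude such functions by demanding decay or some finite $L^{\ga}$ norm. Once $u$ satisfies this on $\Omega$, your extension $Eu$ inherits it (the part near $\pa\Omega$ is compactly supported and the zero-extended part agrees with $u$ at infinity), so the full-space inequality applies to $Eu$ and the argument closes. With that correction, your sketch is sound and in fact supplies what the paper omits.
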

\begin{proof}
One refers to \cite{Nirenberg1959} for the half-space and to \cite{Francesca-Paolo2004} for the exterior domain with a compact boundary.
\end{proof}

We can use Lemma \ref{le2.1} to prove the following commutator and product estimates in the case of a half-space or an exterior domain.

\begin{Lemma}\label{le2.2}
Let $l\ge 1$ be an integer and define the commutator
\begin{align*}
[\na^l,g]h=\na^l(gh)-g\na^lh.
\end{align*}
Then we have
\begin{align*}
\|[\na^l,g]h\|_{L^{p_0}} \lesssim\|\na g\|_{L^{p_1}}
\|\na^{l-1}h\|_{L^{p_2}}+\|\na^l g\|_{L^{p_3}}\|h\|_{L^{p_4}}.
\end{align*}
In addition, we have that for $l\ge0$,
\begin{align*}
\|\na^l(gh)\|_{L^{p_0}} \lesssim\|g\|_{L^{p_1}}
\|\na^{l}h\|_{L^{p_2}} +\|\na^l g\|_{L^{p_3}} \|h\|_{L^{p_4}}.
\end{align*}
In the above, $p_0,p_1,p_2,p_3,p_4\in[1,+\infty]$ such that
\begin{align*}
\frac{1}{p_0}=\frac{1}{p_1}+\frac{1}{p_2}=\frac{1}{p_3}+\frac{1}{p_4}.
\end{align*}
\end{Lemma}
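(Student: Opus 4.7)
My plan is to prove both inequalities by the classical Kato--Ponce/Moser scheme: expand via the Leibniz rule, apply H\"older, and then interpolate the intermediate derivatives with the Gagliardo--Nirenberg inequality of Lemma \ref{le2.1}. I would start from the Leibniz identity
\begin{align*}
\na^l(gh)=\sum_{k=0}^{l}\binom{l}{k}\na^k g\,\na^{l-k}h,
\end{align*}
so that
\begin{align*}
[\na^l,g]h=\sum_{k=1}^{l}\binom{l}{k}\na^k g\,\na^{l-k}h.
\end{align*}
For the commutator, the $k=l$ term is handled by the plain H\"older estimate $\|\na^l g\cdot h\|_{L^{p_0}}\le \|\na^l g\|_{L^{p_3}}\|h\|_{L^{p_4}}$, and the $k=1$ term by $\|\na g\cdot\na^{l-1}h\|_{L^{p_0}}\le \|\na g\|_{L^{p_1}}\|\na^{l-1}h\|_{L^{p_2}}$. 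The remaining intermediate indices $2\le k\le l-1$ will be treated in a uniform way described below.

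For intermediate $k$ I would pick auxiliary exponents $q_1,q_2$ by setting
\begin{align*}
\frac{1}{q_1}=\frac{k-1}{l-1}\cdot\frac{1}{p_3}+\frac{l-k}{l-1}\cdot\frac{1}{p_1},\qquad \frac{1}{q_2}=\frac{l-k}{l-1}\cdot\frac{1}{p_2}+\frac{k-1}{l-1}\cdot\frac{1}{p_4},
\end{align*}
so that $1/q_1+1/q_2=1/p_0$. Applying H\"older with $(q_1,q_2)$ and then Lemma \ref{le2.1} (treating $\na g$ and $h$ as the base functions and checking that the scaling condition collapses to exactly these choices of $q_1,q_2$) yields
\begin{align*}
\|\na^k g\|_{L^{q_1}}\lesssim \|\na^l g\|_{L^{p_3}}^{\frac{k-1}{l-1}}\|\na g\|_{L^{p_1}}^{\frac{l-k}{l-1}},\qquad \|\na^{l-k}h\|_{L^{q_2}}\lesssim \|\na^{l-1}h\|_{L^{p_2}}^{\frac{l-k}{l-1}}\|h\|_{L^{p_4}}^{\frac{k-1}{l-1}}.
\end{align*}
Multiplying these bounds and applying Young's inequality with exponents $\tfrac{l-1}{k-1}$ and $\tfrac{l-1}{l-k}$ gives the desired control of each summand by $\|\na g\|_{L^{p_1}}\|\na^{l-1}h\|_{L^{p_2}}+\|\na^l g\|_{L^{p_3}}\|h\|_{L^{p_4}}$. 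Summing over $k$ finishes the commutator inequality. For the product inequality the same argument covers $1\le k\le l-1$, while the endpoint terms $\|g\cdot\na^l h\|_{L^{p_0}}$ and $\|\na^l g\cdot h\|_{L^{p_0}}$ are estimated by direct H\"older; the case $l=0$ is trivial and $l=1$ is just H\"older.

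The main technical obstacle is the admissibility of the interpolation exponent $\alpha=(k-1)/(l-1)$ in Lemma \ref{le2.1}: when $p_3\in(1,\infty)$ and $(l-1)-(k-1)-n/p_3=l-k-n/p_3\in\mathbb{N}\cup\{0\}$, the Gagliardo--Nirenberg inequality is restricted to $\alpha\in[(k-1)/(l-1),1)$, which still includes the endpoint I need, and analogously for the factor involving $h$. In these borderline situations one must verify the interpolation directly or perturb the exponents slightly, which is a standard but bookkeeping-heavy issue that I would spell out in a short remark. Apart from this, the argument is a routine Leibniz--H\"older--Gagliardo--Nirenberg combination, and it carries over from the whole space to half-spaces and exterior domains with $C^m$ boundary precisely because Lemma \ref{le2.1} is available there.
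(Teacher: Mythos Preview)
Your proposal is correct and follows essentially the same approach as the paper: the paper does not give a self-contained argument but simply refers to \cite[Lemma 3.1]{Ju2004} and \cite[Lemma A.4]{Wang-Liu-Tan2016}, where the proof is precisely the Leibniz--H\"older--Gagliardo--Nirenberg scheme you outline. One small remark: your worry about the admissibility of the interpolation exponent is unfounded here, since in both applications you are using exactly $\alpha=k/m$ (the minimal endpoint), which is always allowed in Lemma~\ref{le2.1}.
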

\begin{proof}
The detailed proof is similar to \cite[Lemma 3.1]{Ju2004} or \cite[Lemma A.4]{Wang-Liu-Tan2016}.
\end{proof}

Next, we give some important time-invariant identities for the density $\rho$ and the deformation gradient $\mathbb{F}$ which can be seen in \cite{Qian-Zhang2010}.

\begin{Lemma}\label{A.3}
If the initial data $(\rho_{0},\mathbb{F}_{0})$ satisfy three constraints
\begin{align*}
\diver (\rho_{0}\mathbb{F}_{0}^{T})=0, \quad \mathbb{F}_{0}^{lk}\na_{l}\mathbb{F}_{0}^{ij}=\mathbb{F}_{0}^{lj}\na_{l}\mathbb{F}_{0}^{ik},
\quad \rho_{0}\det \mathbb{F}_{0}=1,
\end{align*}
then the solution $(\rho, \mathbb{F})$ of \eqref{1.1} satisfies for all $t>0$,
\begin{align*}
\diver (\rho \mathbb{F}^{T})=0, \quad \mathbb{F}^{lk}\na_{l}\mathbb{F}^{ij}=\mathbb{F}^{lj}\na_{l}\mathbb{F}^{ik},
\quad \rho \det \mathbb{F}=1.
\end{align*}
\end{Lemma}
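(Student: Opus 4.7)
My plan is to prove each of the three invariants by deriving a first-order homogeneous linear evolution equation for an appropriate tensor, after which the vanishing (or unit) initial data propagate along the flow. I begin with $\rho\det\mathbb{F}$: Jacobi's formula gives $\pa_t\det\mathbb{F} = \det\mathbb{F}\cdot\trace(\mathbb{F}^{-1}\mathbb{F}_t)$; combined with the material form $D_t\mathbb{F} = \na u\,\mathbb{F}$ of the third equation in \eqref{1.1} (with $D_t := \pa_t + u\cdot\na$), this reduces to $D_t\det\mathbb{F} = \det\mathbb{F}\cdot\diver u$. The continuity equation, written as $D_t\rho = -\rho\diver u$, then yields $D_t(\rho\det\mathbb{F}) = 0$: the quantity is transported along trajectories and hence equals $1$ throughout $\Omega$ for all $t > 0$ provided it does so initially.

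For the curl identity, set $A^{ijk} := \mathbb{F}^{lk}\na_l\mathbb{F}^{ij} - \mathbb{F}^{lj}\na_l\mathbb{F}^{ik}$. Using $D_t\mathbb{F}^{ij} = \na_m u^i\,\mathbb{F}^{mj}$ together with the commutator identity $[D_t,\na_l]f = -\na_l u^m\,\na_m f$, I would compute $D_t(\mathbb{F}^{lk}\na_l\mathbb{F}^{ij})$ and antisymmetrize in $(j,k)$. Three cancellations structure the result: first, the pure second-derivative-of-$u$ contribution assembles into $(\mathbb{F}^{lk}\mathbb{F}^{mj} - \mathbb{F}^{lj}\mathbb{F}^{mk})\na_l\na_m u^i$, which vanishes because $\na_l\na_m u^i$ is symmetric in $(l,m)$ while the coefficient is antisymmetric; second, the terms linear in $\na u$ and $\na\mathbb{F}$ that do not immediately recombine into $A$ cancel pairwise upon relabeling $l\leftrightarrow m$; third, the surviving terms collapse into $\na_m u^i\,A^{mjk}$. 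This yields $D_t A^{ijk} = \na_m u^i\,A^{mjk}$, a closed homogeneous linear equation, from which $A^{ijk}(\cdot,0) = 0$ forces $A^{ijk}(\cdot,t)\equiv 0$.

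For the divergence identity, set $B^k := \na_j(\rho\mathbb{F}^{jk})$ and combine the continuity and $\mathbb{F}$-equations to obtain $(\rho\mathbb{F}^{jk})_t = -\na_l(\rho u^l\mathbb{F}^{jk}) + \rho\,\na_l u^j\,\mathbb{F}^{lk}$. Taking $\na_j$, the two second-derivative contributions $\pm\rho\mathbb{F}^{lk}\na_l\diver u$ cancel, while the remaining cross-terms $\na_j(\rho\mathbb{F}^{lk})\,\na_l u^j$ and $\na_j u^l\,\na_l(\rho\mathbb{F}^{jk})$ coincide under $j\leftrightarrow l$ relabeling and subtract to zero. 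What survives is the conservation law
\[
\pa_t B^k + \diver(u B^k) = 0,
\]
which propagates $B^k\equiv 0$ from its initial value.

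The main technical obstacle is the second step: the second cancellation described there is the nontrivial one and requires careful tracking of four separate tensor contractions before the clean identity $D_t A^{ijk} = \na_m u^i\,A^{mjk}$ emerges. Steps one and three reduce to essentially mechanical bookkeeping once Jacobi's formula and continuity are combined with $D_t\mathbb{F} = \na u\,\mathbb{F}$. All manipulations are justified by the $H^2$-regularity of $(\rho, u, \mathbb{F})$ afforded by the framework surrounding Theorem~\ref{th1.2}, which in particular ensures the flow map is a $C^1$ diffeomorphism of $\Omega$ so that pointwise transport along characteristics is meaningful.
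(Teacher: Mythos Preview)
Your proposal is correct and follows precisely the standard transport-equation approach used in the cited reference \cite{Qian-Zhang2010}; the paper itself does not supply a proof but simply refers to that work. Each of your three derivations---$D_t(\rho\det\mathbb{F})=0$, $D_tA^{ijk}=\na_m u^i\,A^{mjk}$, and $\pa_tB^k+\diver(uB^k)=0$---is the classical computation, and your description of the cancellations in the second step is accurate.
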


In addition, we have the following relationships for the above three constraints.

\begin{Lemma}\label{A.3'}
Let $\Omega\subset\mathbb{R}^3$ be a half-space or an exterior domain with a compact boundary $\pa\Omega$. If $(\rho-1,\mathbb{F}-\mathbb{I})\in H^2(\Omega)$ with $(\rho,\mathbb{F})\to (1,\mathbb{I})$  as $x\to \infty$, then
\begin{align*}
\left.
\begin{array}{ll}
\diver  (\rho \mathbb{F}^{T})=0,\smallskip\\
\mathbb{F}^{lk}\na_{l}\mathbb{F}^{ij}=\mathbb{F}^{lj}\na_{l}\mathbb{F}^{ik}
\end{array}
\right\}\Rightarrow \rho \det \mathbb{F}=1
\end{align*}
and
\begin{align*}
\left.
\begin{array}{ll}
\rho \det \mathbb{F}=1,\smallskip\\
\mathbb{F}^{lk}\na_{l}\mathbb{F}^{ij}=\mathbb{F}^{lj}\na_{l}\mathbb{F}^{ik}
\end{array}
\right\}\Rightarrow \diver  (\rho \mathbb{F}^{T})=0.
\end{align*}
\end{Lemma}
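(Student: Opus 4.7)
The plan is to derive a single pointwise algebraic identity that, assuming the symmetry condition $\mathbb{F}^{lk}\na_l\mathbb{F}^{ij}=\mathbb{F}^{lj}\na_l\mathbb{F}^{ik}$, directly couples the two scalar quantities appearing in the lemma. Specifically, I aim to establish
\begin{equation*}
\mathbb{F}^{ki}\na_k(\rho\det\mathbb{F})=\det(\mathbb{F})\,\na_k(\rho\mathbb{F}^{ki}),
\end{equation*}
from which both implications follow in a single line each.

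To derive the identity, I would apply Jacobi's formula $\na_k\det\mathbb{F}=\det(\mathbb{F})\,(\mathbb{F}^{-1})^{ml}\na_k\mathbb{F}^{lm}$ to expand the left-hand side and then contract with $\mathbb{F}^{ki}$. The symmetry hypothesis, rewritten after relabeling of indices in the form $\mathbb{F}^{ki}\na_k\mathbb{F}^{lm}=\mathbb{F}^{km}\na_k\mathbb{F}^{li}$, combines with the elementary contraction $(\mathbb{F}^{-1})^{ml}\mathbb{F}^{km}=\delta^{kl}$ to collapse the cross term exactly into $\rho\,\na_k\mathbb{F}^{ki}$, completing the identity. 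The first implication is then immediate: if $\rho\det\mathbb{F}\equiv 1$, the left-hand side vanishes, and since $\det\mathbb{F}=1/\rho>0$ one concludes $\na_k(\rho\mathbb{F}^{ki})=0$, i.e.\ $\diver(\rho\mathbb{F}^T)=0$. For the converse, $\diver(\rho\mathbb{F}^T)=0$ kills the right-hand side and yields $\mathbb{F}^T\na(\rho\det\mathbb{F})=0$; invertibility of $\mathbb{F}$ then gives $\na(\rho\det\mathbb{F})=0$, and on the connected domain $\Omega$ the far-field condition $(\rho,\mathbb{F})\to(1,\mathbb{I})$ forces $\rho\det\mathbb{F}\equiv 1$.

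The only genuinely delicate point is ensuring pointwise invertibility of $\mathbb{F}$ in the second implication, where $\rho\det\mathbb{F}=1$ is not a priori available; however, the lemma is used in the perturbative framework of the theorem, so the Sobolev embedding $H^2(\Omega)\hookrightarrow L^\infty(\Omega)$ together with the smallness of $\mathbb{F}-\mathbb{I}$ in $H^2$ keeps $\mathbb{F}$ a uniformly small perturbation of the identity and no degeneracy occurs. Beyond that, the main work is purely bookkeeping of four-index tensor contractions, and the whole substantive content of the proof is the observation that the symmetry hypothesis is precisely what makes the cross term collapse into the form $\na_k(\rho\mathbb{F}^{ki})$.
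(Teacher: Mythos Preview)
Your argument is correct and, in fact, more explicit than what the paper provides: the paper simply writes ``Referring to \cite{Hu-Zhao2020-3,Hu-Zhao2020-2}, the details are omitted'' and gives no proof at all. The algebraic identity
\[
\mathbb{F}^{ki}\na_k(\rho\det\mathbb{F})=\det(\mathbb{F})\,\na_k(\rho\mathbb{F}^{ki})
\]
is exactly the mechanism behind the cited references, and your derivation via Jacobi's formula together with the symmetry relabeling $\mathbb{F}^{ki}\na_k\mathbb{F}^{lm}=\mathbb{F}^{km}\na_k\mathbb{F}^{li}$ and the contraction $\mathbb{F}^{km}(\mathbb{F}^{-1})^{ml}=\delta^{kl}$ is clean and correct.

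One small comment on your ``delicate point'': you resolve the invertibility of $\mathbb{F}$ in the first implication by appealing to the perturbative smallness in the theorem, but in the paper's standing conventions $\mathbb{F}$ is by definition an element of $\mathbb{M}^{3\times 3}$, the set of $3\times 3$ matrices with \emph{positive determinant}. So $\det\mathbb{F}>0$ (hence invertibility) is already built into the meaning of the symbol $\mathbb{F}$, and no recourse to smallness is needed. With that understood, your proof goes through verbatim at the level of generality stated in the lemma.
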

\begin{proof}
Referring to \cite{Hu-Zhao2020-3,Hu-Zhao2020-2}, the details are omitted.
\end{proof}
Finally, we will state some useful regularity estimates.

\begin{Lemma}\label{le2.3}
Let $k=0,1$. Let $\Omega\subset\mathbb{R}^n$ be a half-space or an exterior domain with a $C^3$ compact boundary $\pa\Omega$. For the Stokes problem
\begin{align*}
\begin{cases}
\diver  v=h,  &  \mbox{in}\ \Omega,\\
-\Delta v+\na q=g, & \mbox{in}\ \Omega,\\
v\mid_{\pa\Omega}=a,
\end{cases}
\end{align*}
it holds that
\begin{align*}
\|\na^{k+2}v\|_{L^2(\Omega)}+\|\na^{k+1}q\|_{L^2(\Omega)}\le C(\|h\|_{H^{k+1}(\Omega)}+\|g\|_{H^{k}(\Omega)}+\|a\|_{H^{k+\frac{3}{2}}(\pa\Omega)}+\|\na v\|_{L^2(\Omega)}).
\end{align*}
\end{Lemma}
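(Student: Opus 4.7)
The plan is to reduce the problem to the standard Cattabriga-type $L^2$ Stokes regularity estimate and then localize to handle the geometry of $\Omega$. The right-hand side term $\|\na v\|_{L^2}$ (rather than $\|v\|_{L^2}$) is essential because in a half-space or exterior domain one does not have a Poincar\'e inequality for $v$ itself, but $\na v\in L^2$ can be arranged from an energy estimate.

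First I would reduce to homogeneous boundary data. Using a bounded lifting operator from $H^{k+3/2}(\pa\Omega)$ into $H^{k+2}(\Omega)$ (a right inverse of the trace, controlling the lift by $\|a\|_{H^{k+3/2}(\pa\Omega)}$), I produce $v_1$ with $v_1|_{\pa\Omega}=a$ and transfer the boundary condition into modifications of $h$ and $g$ (adding a divergence and a Laplacian of $v_1$). Next I would homogenize the divergence constraint by solving $\diver w=h-\diver v_1$ with $w|_{\pa\Omega}=0$, using the Bogovski\u\i{} operator on a bounded collar near $\pa\Omega$ (available on $C^3$ domains) together with a standard construction in the unbounded part; this $w$ is controlled in $H^{k+2}$ by $\|h\|_{H^{k+1}}+\|a\|_{H^{k+3/2}(\pa\Omega)}$. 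After these two reductions the unknown $\tilde v:=v-v_1-w$ satisfies $\diver\tilde v=0$, $\tilde v|_{\pa\Omega}=0$, and $-\De\tilde v+\na q=\tilde g$ with $\|\tilde g\|_{H^k}$ bounded by the data.

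For $\tilde v$ I would then invoke the classical Stokes $H^{k+2}$ regularity. To accommodate both geometries I use a smooth partition of unity $\{\chi_i\}$ subordinate to a cover of $\overline\Omega$: one cutoff supported away from $\pa\Omega$ reduces to the whole-space Stokes system, which is handled by Fourier multipliers; finitely many cutoffs supported near $\pa\Omega$ are flattened via local $C^3$-diffeomorphisms to the half-space $\mathbb{R}^3_+$, where the Stokes system with no-slip boundary condition admits an $H^{k+2}$ estimate by tangential Fourier analysis and Agmon-Douglis-Nirenberg-type theory (or by explicit Green's tensor / reflection). Commuting $\chi_i$ through the Stokes operator produces lower-order error terms of the form $\|\na\tilde v\|_{L^2}$ and $\|q\|_{L^2}$ on overlapping patches; the pressure is only defined up to a constant but $\na q$ is well-defined, and the localization errors in $q$ can be controlled by $\|\na q\|_{H^{k-1}}$ and reabsorbed.

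The one technical nuisance is the pressure term $q$ itself appearing after cutting off (since $\chi_i(-\De\tilde v+\na q)$ does not preserve the structure). I would handle this by taking $\diver$ of the Stokes equation to get $\De q=\diver\tilde g$, giving $\na q$ in terms of Riesz transforms of $\tilde g$, then applying the standard pressure-gradient estimate $\|\na^{k+1}q\|_{L^2}\lesssim\|\tilde g\|_{H^k}+\|\na\tilde v\|_{L^2}$ on each patch; this is the step most sensitive to the unboundedness of $\Omega$, but it is precisely where the $\|\na v\|_{L^2}$ term on the right-hand side of the claimed estimate absorbs the low-frequency contribution. Collecting all patches and undoing the two reductions yields the stated bound. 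The only remaining subtlety is justifying that $v\in\hat W^{k+2,2}$ a priori, which on an exterior domain is done by approximating with large-ball truncations and passing to the limit using the uniform estimate just derived; this is exactly the route taken in Galdi's monograph, to which the authors defer.
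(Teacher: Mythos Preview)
Your sketch is a correct outline of the classical Cattabriga--Solonnikov/ADN route to Stokes regularity on unbounded domains, and the ingredients you list (trace lifting, Bogovski\u{\i}-type divergence correction, localization and flattening, pressure recovered from $\De q=\diver\tilde g$, the $\|\na v\|_{L^2}$ term absorbing the low-frequency leftovers) are the right ones. There is nothing to compare against, however: the paper does not supply a proof of this lemma at all but simply writes ``Refer to \cite{Temam1977,Matsumura-Nishida1983}.'' (Your closing remark that the authors defer to Galdi is slightly off; they cite Temam and Matsumura--Nishida instead, though Galdi's monograph is indeed where the exterior-domain version is treated most systematically.) So your proposal is not an alternative to the paper's argument---it \emph{is} an argument where the paper has none, and as such it is a reasonable expansion of the citation.
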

\begin{proof}
Refer to \cite{Temam1977,Matsumura-Nishida1983}.
\end{proof}

\begin{Lemma}\label{le2.4}
Let $k=0,1$. Let $\Omega\subset\mathbb{R}^n$ be a half-space or an exterior domain with a $C^3$ compact boundary $\pa\Omega$. The symbol $\nu$ is the unit outward normal to $\pa\Omega$. Given an $f\in H^k(\Omega)$, there exists a $\phi\in H^{k+2}(\Omega)$ satisfying
\begin{align*}
\begin{cases}
\De \phi=f,  &  \mbox{in}\ \Omega,\\
\phi\mid_{\pa\Omega}=0\quad \mbox{or}\quad \na \phi\cdot\nu\mid_{\pa\Omega}=0
\end{cases}
\end{align*}
and
\begin{align*}
\|\na^{k+2}\phi\|_{L^2}\le C (\|f\|_{H^k}+\|\na\phi\|_{L^2}).
\end{align*}
\end{Lemma}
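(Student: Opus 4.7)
The plan is to establish the $H^{k+2}$ a priori regularity estimate by a standard localization argument, and then derive existence by an exhaustion method leveraging this estimate.

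For the estimate I would cover $\overline{\Omega}$ by finitely many balls — some interior and some meeting $\pa\Omega$, the latter straightened into half-balls by $C^3$ diffeomorphisms — together with a subordinate partition of unity $\{\chi_i\}$. Each $\chi_i\phi$ satisfies $\De(\chi_i\phi) = \chi_i f + [\De,\chi_i]\phi$ with either trivial boundary conditions (interior cutoffs) or the inherited Dirichlet/Neumann condition on $\{x_n = 0\}$ (boundary cutoffs, after flattening). For interior cutoffs, extension by zero to $\mathbb{R}^n$ together with the Calder\'on-Zygmund estimate $\|\na^{k+2}w\|_{L^2(\mathbb{R}^n)} \lesssim \|\De w\|_{H^k(\mathbb{R}^n)}$ closes the bound. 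For boundary cutoffs, after flattening one is left with a Poisson-type equation on $\mathbb{R}^n_+$; for the pure Laplacian this reduces to the whole-space Calder\'on-Zygmund estimate via odd reflection across $\{x_n=0\}$ in the Dirichlet case and even reflection in the Neumann case, and the full variable-coefficient case follows by a perturbation argument, with $C^3$ boundary regularity providing exactly the coefficient smoothness needed when $k=1$. Summing the local bounds, the terms $\chi_i f$ aggregate to $\|f\|_{H^k}$, while the commutators $[\De, \chi_i]\phi$ involve only first derivatives of $\phi$ against smooth compactly supported cutoff derivatives and contribute $\|\na\phi\|_{L^2}$, producing the right-hand side of the lemma. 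For existence I would solve the problem on an exhausting sequence of bounded domains $\Omega_R = \Omega\cap B_R$ with an auxiliary homogeneous Dirichlet condition on $\pa B_R \cap \Omega$, and pass to the limit using the uniform local regularity bounds together with the natural energy identity $\int_{\Omega_R}|\na\phi_R|^2 = -\int_{\Omega_R} f\phi_R$ combined with the Hardy/Sobolev inequality $\|\phi_R/|x|\|_{L^2}\lesssim \|\na\phi_R\|_{L^2}$ available in $n=3$.

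The main obstacle is the absence of Poincar\'e's inequality on the unbounded domain $\Omega$: since $\|\phi\|_{L^2}$ is not controlled by $\|\na\phi\|_{L^2}$, the gradient term must remain on the right-hand side of the a priori estimate and cannot be traded for $\|f\|_{L^2}$. This also forces the natural solution class to be the homogeneous space $\dot H^{k+2}$ rather than $H^{k+2}$, which is why the lemma controls only the top-order norm $\|\na^{k+2}\phi\|_{L^2}$ on the left. A secondary subtlety is bookkeeping the regularity budget in the chart pullback for $k=1$: the transformed Laplacian has coefficients depending on the chart's first two derivatives, so the $C^3$ assumption on $\pa\Omega$ is sharp for the perturbation step.
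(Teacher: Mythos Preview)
The paper does not give its own proof of this lemma: it simply cites \cite{Bourguignon-Brezis1974,Matsumura-Nishida1983}. Your proposal therefore supplies far more detail than the paper does, and the overall strategy --- localization near the compact boundary, boundary flattening, reduction to the half-space via odd/even reflection, and exhaustion for existence --- is the standard route underlying those references and is essentially correct.

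Two small points are worth tightening. First, you write that the commutators $[\De,\chi_i]\phi$ ``involve only first derivatives of $\phi$'', but in fact $[\De,\chi_i]\phi = 2\na\chi_i\cdot\na\phi + (\De\chi_i)\phi$ has a zeroth-order term. This is harmless because $\na\chi_i$ and $\De\chi_i$ are supported in a fixed compact region near $\pa\Omega$, and on that region one can control $\|\phi\|_{L^2}$ by $\|\na\phi\|_{L^2}$ via a local Poincar\'e inequality (using $\phi|_{\pa\Omega}=0$ in the Dirichlet case, or a Hardy-type bound in the Neumann case); but the sentence as written is inaccurate. Second, the phrase ``cover $\overline{\Omega}$ by finitely many balls'' is imprecise since $\overline{\Omega}$ is unbounded; what you mean is to cover a neighborhood of the compact boundary by finitely many charts and treat the complementary far-field region with a single cutoff and the whole-space Calder\'on--Zygmund estimate. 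With these adjustments your argument goes through.
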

\begin{Remark}
For the case $\na \phi\cdot\nu\mid_{\pa\Omega}=0$, we additionally assume
\begin{align*}
\int_\Omega f\,dx=0.
\end{align*}
\end{Remark}

\begin{proof}
Refer to \cite{Bourguignon-Brezis1974,Matsumura-Nishida1983}.
\end{proof}

\section{Lower-order Energy Estimates}\label{se3}

Note that the relations \eqref{2.4} and \eqref{2.6}
\begin{align}\label{equivalent}
\begin{cases}
\mathbb{F}-\mathbb{I}=-\na\varphi+O(|\na\varphi|^2),\\
\rho-1=\diver \varphi+O(|\na\varphi|^2).
\end{cases}
\end{align}
It suffices to derive the energy estimates of the solution $(u,\varphi,\na\phi)$ to the linearized system \eqref{2.8}. Meanwhile, we assume that for some sufficiently small $\de>0$ and some $T>0$,
\begin{align*}
\sup_{0\le t\le T}\big[\|(\rho-1, u, \mathbb{F}-\mathbb{I})(t)\|_{H^2}+\|\De^{-1}\diver \mathbb{F}^{-1}(t)\|_{L^2}\big]<\de,
\end{align*}
which implies
\begin{align}\label{equivalent'}
\sup_{0\le t\le T}\big[\|u(t)\|_{H^2}+\|(\na\phi,\varphi)(t)\|_{H^3}\big]<\de.
\end{align}
In this section, we shall give the lower-order energy estimates of solution in the half-space or the exterior domain with a compact boundary. For the sake of convenience, we give some estimates for $R_1$ defined in \eqref{R12}. By the a priori assumptions, we have
\begin{align}\label{R-1}
R_1\approx u\na u+\na\varphi\na^2u+\na\varphi\na^2\varphi+|\na\varphi|^2\na^2u+|\na\varphi|^2\na^2\varphi,
\end{align}
which implies
\begin{align*}
\na R_1&\approx \na u\na u+u\na^2u+\na^2\varphi\na^2u+\na\varphi\na^3u+\na^2\varphi\na^2\varphi+\na\varphi\na^3\varphi\nonumber\\
&\quad+\na\varphi\na^2\varphi\na^2u+|\na\varphi|^2\na^3u+\na\varphi\na^2\varphi\na^2\varphi+|\na\varphi|^2\na^3\varphi.
\end{align*}
Then, we have
\begin{align}\label{R_1-L2}
\|R_1\|_{L^2}&\lesssim\|u\|_{L^6}\|\na u\|_{L^3}+\|\na\varphi\|_{L^\infty}\|\na^2 u\|_{L^2}+\|\na\varphi\|_{L^\infty}\|\na^2\varphi\|_{L^2}+\|\na\varphi\|_{L^\infty}^2\|\na^2u\|_{L^2}
+\|\na\varphi\|_{L^\infty}^2\|\na^2\varphi\|_{L^2}\nonumber\\
&\lesssim\de(\|\na u\|_{H^1}+\|\na^2\varphi\|_{L^2}).
\end{align}
Similarly, we have
\begin{align}\label{R_1-L6/5}
\|R_1\|_{L^{6/5}}\lesssim\de(\|\na u\|_{L^2}+\|\na\varphi\|_{H^1})
\end{align}
and
\begin{align}\label{na-R_1-L2}
\|\na R_1\|_{L^2}\lesssim\de(\|\na u\|_{H^2}+\|\na^2\varphi\|_{H^1}).
\end{align}

First, we shall construct the dissipation estimate for $u, \na u$.

\begin{Lemma}\label{le3.1}
Let $\Omega\subset\r3$ be a half-space or an exterior domain with a compact boundary $\pa\Omega\in\mathcal{C}^3$. It holds that
\begin{align}\label{3.1}
\frac{d}{dt}\|(u,\diver\varphi,\na\varphi,\na\phi)\|_{L^2}^2+\|(u,\na u)\|_{L^2}^2\lesssim \de\|\na\varphi\|_{H^1}^2.
\end{align}
\end{Lemma}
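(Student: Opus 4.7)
The plan is to test the momentum equation $\eqref{2.8}_1$ against $u$ and integrate over $\Omega$. The no-slip condition $u|_{\pa\Omega}=0$ annihilates every boundary contribution produced by integrating by parts against $-\mu\De u$, $-(\mu+\la)\na\diver u$, $\De\varphi$, $\na\diver\varphi$ and $-\na\phi$, yielding
\begin{equation*}
\tfrac{1}{2}\tfrac{d}{dt}\|u\|_{L^2}^2 + \mu\|\na u\|_{L^2}^2 + (\mu+\la)\|\diver u\|_{L^2}^2 + \|u\|_{L^2}^2 - \int_\Omega\na\varphi{:}\na u\,dx - \int_\Omega\diver\varphi\,\diver u\,dx + \int_\Omega\phi\,\diver u\,dx = \int_\Omega R_1\cdot u\,dx.
\end{equation*}
To turn the two $\varphi$-couplings into time derivatives I would insert $u = R_2-\varphi_t$ from $\eqref{2.8}_2$, producing $\tfrac{1}{2}\tfrac{d}{dt}(\|\na\varphi\|_{L^2}^2+\|\diver\varphi\|_{L^2}^2)$ together with the nonlinear remainders $-\int_\Omega\na\varphi{:}\na R_2\,dx$ and $-\int_\Omega\diver\varphi\,\diver R_2\,dx$.

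The delicate piece is the Poisson coupling $\int_\Omega\phi\,\diver u\,dx$. Using the same substitution gives $\diver u = -\pa_t\diver\varphi + \diver R_2$, and then $\eqref{2.8}_3$ rewrites $\pa_t\diver\varphi = \De\phi_t - \pa_tO(|\na\varphi|^2)$. Integration by parts on $-\int_\Omega\phi\,\De\phi_t\,dx$ produces $\tfrac{1}{2}\tfrac{d}{dt}\|\na\phi\|_{L^2}^2$; crucially, the boundary contribution $-\int_{\pa\Omega}\phi\,(\na\phi_t\cdot\nu)\,dS$ vanishes both under the Dirichlet condition $\phi|_{\pa\Omega}=0$ (which forces $\phi_t|_{\pa\Omega}=0$) and under the Neumann condition $\na\phi\cdot\nu|_{\pa\Omega}=0$ (which forces $\na\phi_t\cdot\nu|_{\pa\Omega}=0$), so the argument is uniform across the two cases in \eqref{1.2}. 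Collecting everything, the left-hand side is exactly the desired $\tfrac{1}{2}\tfrac{d}{dt}\|(u,\diver\varphi,\na\varphi,\na\phi)\|_{L^2}^2$ plus the full dissipation $\mu\|\na u\|_{L^2}^2+(\mu+\la)\|\diver u\|_{L^2}^2+\|u\|_{L^2}^2$.

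The remaining task is to bound every nonlinear residue by $\de(\|\na u\|_{L^2}^2+\|\na\varphi\|_{H^1}^2)$, so that the $\de\|\na u\|_{L^2}^2$ portion is absorbed into the dissipation. For $\int_\Omega R_1\cdot u\,dx$ I would use the H\"older pairing $\|R_1\|_{L^{6/5}}\|u\|_{L^6}$ together with $\|u\|_{L^6}\lesssim\|\na u\|_{L^2}$ from Lemma \ref{le2.1} and the estimate \eqref{R_1-L6/5}. The two $R_2$-remainders are quadratic in $u$ and $\na\varphi$; integrating $\int\na\varphi{:}\na R_2\,dx$ by parts (the boundary term vanishes since $R_2|_{\pa\Omega}=0$) and applying H\"older with $L^2\cdot L^6\cdot L^3$ suffices, and similarly for the $\int\na\phi\cdot R_2\,dx$ obtained from $\int\phi\,\diver R_2\,dx$. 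The genuinely delicate term is $\int_\Omega\phi\,\pa_tO(|\na\varphi|^2)\,dx\sim\int_\Omega\phi\,\na\varphi\,\na\varphi_t\,dx$: plugging in $\na\varphi_t=\na R_2-\na u$ and using \eqref{equivalent'} one obtains $\|\na\varphi_t\|_{L^2}\lesssim\|\na u\|_{L^2}+\de\|\na^2\varphi\|_{L^2}$, and then $\|\phi\|_{L^6}\lesssim\|\na\phi\|_{L^2}\lesssim\de$ combined with $\|\na\varphi\|_{L^3}\lesssim\|\na\varphi\|_{H^1}$ gives the required bound. The main obstacle I anticipate is precisely to avoid higher derivatives of $u$ in this last estimate, which is achieved by using $\|u\|_{L^\infty}\lesssim\|u\|_{H^2}<\de$ from the a priori assumption to control $\|\na R_2\|_{L^2}\lesssim\de(\|\na u\|_{L^2}+\|\na^2\varphi\|_{L^2})$ without invoking $\|\na^2 u\|_{L^2}$.
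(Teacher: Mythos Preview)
Your proposal is correct and follows essentially the same strategy as the paper: testing $\eqref{2.8}_1$ against $u$, using $\eqref{2.8}_2$ to convert the cross terms into $\tfrac{d}{dt}$-contributions, and handling the Poisson coupling via $\eqref{2.8}_3$ with the boundary condition on $\phi$ killing the boundary term uniformly in both cases. The only minor difference is in the treatment of $\int_\Omega\phi\,\pa_tO(|\na\varphi|^2)\,dx$: the paper invokes the continuity equation $\eqref{1.1}_1$ together with \eqref{2.6} to rewrite $O(|\na\varphi|^2)_t$ as a pure divergence (their identity \eqref{1-20-3}), which after one integration by parts produces the term $I_4=-\int_\Omega[\diver\varphi+O(|\na\varphi|^2)]\na\phi\cdot u\,dx$, whereas you bound it directly via $\|\phi\|_{L^6}\|\na\varphi\|_{L^3}\|\na\varphi_t\|_{L^2}$. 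Both routes give the same $\de(\|\na u\|_{L^2}^2+\|\na\varphi\|_{H^1}^2)$ bound, so this is a cosmetic difference rather than a substantive one.
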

\begin{proof}
From Eq. $\eqref{2.8}_{3}$, we have
\begin{align}\label{1-20-1}
\diver \varphi_t=\Delta \phi_t-O(|\na\varphi|^2)_t.
\end{align}
Taking $\diver $ to Eq. $\eqref{2.8}_{2}$, we have
\begin{align}\label{1-20-2}
\diver  u=-\diver \varphi_t-\diver (u\cdot\na\varphi),
\end{align}
together with $(\ref{1.1})_{1}$ and $\eqref{2.6}$, which infers
\begin{align}\label{1-20-3}
O(|\na\varphi|^2)_{t}=-\diver [(\diver \varphi+O(|\na\varphi|^2))u]+\diver (u\cdot\na\varphi).
\end{align}
Integrating the resulting identity $u\cdot(L_1-R_1)-\Delta\varphi\cdot(L_2-R_2)=0$ over $\Omega$ by parts and using the boundary conditions for $(u,\varphi,\phi)$ in \eqref{2.8'}, by \eqref{1-20-1}--\eqref{1-20-3}, we obtain
\begin{align}\label{3.2}
\frac{1}{2}&\frac{d}{dt}\int_{\Omega}(|u|^2+|\diver \varphi|^2+|\na\varphi|^2+|\na\phi|^2)\,dx
+\int_{\Omega}[\mu|\na u|^2+(\mu+\lambda)|\diver  u|^2+|u|^2]\,dx\nonumber\\
&=\int_{\Omega}[R_1\cdot u-R_2\cdot\De\varphi-\diver (u\cdot\na\varphi)\diver\varphi] \,dx-
\int_\Omega [\diver \varphi+O(|\na\varphi|^2)]\na\phi\cdot u\,dx\nonumber\\
&:=I_1+I_2+I_3+I_4,
\end{align}
where we have used the facts
\begin{align*}
\int_{\Omega}u\cdot\na\diver\varphi \,dx&=-\int_{\Omega}\diver u\diver\varphi \,dx=\int_{\Omega}[\diver \varphi_{t}+\diver (u\cdot\na\varphi)]\diver\varphi\,dx\nonumber\\
&=\frac{1}{2}\frac{d}{dt}\int_{\Omega}|\diver\varphi|^2\,dx+\int_{\Omega}\diver(u\cdot\na\varphi)\diver\varphi \,dx
\end{align*}
and
\begin{align*}
-\int_{\Omega}u\cdot\na\phi \,dx&=\int_{\Omega}\diver u\phi \,dx
=\int_{\Omega}[-\diver \varphi_{t}-\diver (u\cdot\na\varphi)]\phi \,dx\nonumber\\&
=-\int_{\Omega}\Delta\phi_t\phi \,dx+\int_{\Omega} [O(|\na\varphi|^2)_{t}\phi+(u\cdot\na\varphi)\cdot\na\phi] \,dx\nonumber\\
&=\frac{1}{2}\frac{d}{dt}\int_{\Omega}|\na\phi|^2\,dx+\int_{\Omega} [O(|\na\varphi|^2)_{t}\phi +(u\cdot\na\varphi)\cdot\na\phi] \,dx\nonumber\\
&=\frac{1}{2}\frac{d}{dt}\int_{\Omega}|\na\phi|^2\,dx+\int_{\Omega} [\diver \varphi+O(|\na\varphi|^2)]\na\phi\cdot u \,dx.
\end{align*}
Then, we need to estimate the right-hand side of \eqref{3.2}. By \eqref{equivalent'}, \eqref{R_1-L6/5}, H\"{o}lder's inequality and Lemma \ref{le2.1}, we have
\begin{align*}
I_1&=\int_{\Omega}R_1\cdot u \,dx\lesssim\|R_1\|_{L^{6/5}}\|u\|_{L^6}\lesssim\de(\|\na\varphi\|_{H^1}^2+\|\na u\|_{L^2}^2);\\
I_2&=\int_{\Omega}-R_2\cdot\Delta\varphi \,dx=\int_{\Omega}u\cdot\na\varphi\cdot\De\varphi \,dx\lesssim\|u\|_{L^6}\|\na\varphi\|_{L^3}\|\na^2\varphi\|_{L^2}\lesssim\de(\|\na\varphi\|_{H^1}^2+\|\na u\|_{L^2}^2);
\\
I_3&=\int_{\Omega}-\diver (u\cdot\na\varphi)\diver\varphi\,dx\lesssim\|\na u\|_{L^3}\|\na\varphi\|_{L^6}\|\na\varphi\|_{L^2}+\|u\|_{L^6}\|\na^2\varphi\|_{L^2}\|\na \varphi\|_{L^3}\lesssim\de\|\na\varphi\|_{H^1}^2;
\\
I_4&=-
\int_\Omega [\diver \varphi+O(|\na\varphi|^2)]\na\phi\cdot u\,dx\lesssim\|\na\varphi\|_{L^3}\|\na\phi\|_{L^2}\|u\|_{L^6}\lesssim\de(\|\na\varphi\|_{H^1}^2+\|\na u\|_{L^2}^2).
\end{align*}
Plugging the above four estimates $I_{1}-I_{4}$ into \eqref{3.2}, we can obtain \eqref{3.1}.
\end{proof}
Next, we construct the dissipation estimate for $u_t, \na u_t$.
\begin{Lemma}\label{le3.2}
Let $\Omega\subset\r3$ be a half-space or an exterior domain with a compact boundary $\pa\Omega\in\mathcal{C}^3$. It holds that
\begin{align}\label{3.9}
\frac{d}{dt}\|(u_{t},\diver\varphi_{t},\na\varphi_{t},\na\phi_{t})\|_{L^2}^2+\|(u_{t},\na u_{t})\|_{L^2}^2\lesssim \de\|\na(u,\varphi)\|_{H^1}^2.
\end{align}
\end{Lemma}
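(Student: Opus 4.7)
\medskip

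\noindent\textbf{Proof proposal.} The plan is to mimic the argument of Lemma~\ref{le3.1}, but applied to the time-differentiated system. First I would differentiate $\eqref{2.8}$ in $t$ to obtain
\begin{align*}
L_{1,t}&:=u_{tt}-\mu\Delta u_t-(\mu+\lambda)\na\diver u_t+\De\varphi_t+\na\diver\varphi_t-\na\phi_t+u_t=R_{1,t},\\
L_{2,t}&:=\varphi_{tt}+u_t=R_{2,t},\\
\De\phi_t&=\diver\varphi_t+O(|\na\varphi|^2)_t.
\end{align*}
The boundary conditions $u_t|_{\pa\Omega}=\varphi_t|_{\pa\Omega}=0$ and either $\phi_t|_{\pa\Omega}=0$ or $\na\phi_t\cdot\nu|_{\pa\Omega}=0$ follow from time-differentiating the conditions in \eqref{2.8'}, so all integrations by parts needed below are legitimate.

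Next I would integrate the identity $u_t\cdot(L_{1,t}-R_{1,t})-\De\varphi_t\cdot(L_{2,t}-R_{2,t})=0$ over $\Omega$. Exactly as in Lemma~\ref{le3.1}, the problematic coupling terms $\int u_t\cdot\De\varphi_t\,dx$ and $-\int \De\varphi_t\cdot u_t\,dx$ cancel identically. The term $\int u_t\cdot\na\diver\varphi_t\,dx$ is integrated by parts and then $\diver u_t$ is replaced using the divergence of $L_{2,t}$, i.e.
\begin{align*}
\diver u_t=-\diver\varphi_{tt}-\diver(u\cdot\na\varphi)_t,
\end{align*}
which produces $\tfrac12 \tfrac{d}{dt}\|\diver\varphi_t\|_{L^2}^2$ plus a commutator error. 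The electrostatic term $-\int u_t\cdot\na\phi_t\,dx$ is treated in the same manner as in Lemma~\ref{le3.1}: integrate by parts, substitute $\diver u_t$ from $L_{2,t}$, and use the time-differentiated Poisson equation to recognize $\tfrac12\tfrac{d}{dt}\|\na\phi_t\|_{L^2}^2$ plus error. Together with the viscous and damping terms, this yields the desired differential identity
\begin{align*}
\tfrac{1}{2}\tfrac{d}{dt}\|(u_t,\diver\varphi_t,\na\varphi_t,\na\phi_t)\|_{L^2}^2+\mu\|\na u_t\|_{L^2}^2+(\mu+\lambda)\|\diver u_t\|_{L^2}^2+\|u_t\|_{L^2}^2=\mathcal{E},
\end{align*}
where $\mathcal{E}$ collects the nonlinear error contributions.

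The main work is then to bound $\mathcal{E}$ by $\de\|\na(u,\varphi)\|_{H^1}^2$ plus a small multiple of $\|(u_t,\na u_t)\|_{L^2}^2$ which is absorbed into the left-hand side. The key observation is that every first time derivative can be traded for spatial derivatives through the equations themselves: from $L_{2,t}=R_{2,t}$ one has $\varphi_t=-u+R_2$, and from $L_{1,t}=R_{1,t}$ one may express $u_t$ algebraically in terms of $\na u,\na^2u,\na\varphi,\na^2\varphi,\na\phi$ and $R_1$; similarly $\phi_t$ is controlled through the Poisson equation after noting $O(|\na\varphi|^2)_t\sim \na\varphi\,\na\varphi_t$. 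Consequently $R_{1,t}$ and $R_{2,t}$ are schematically of the form $u_t\na u+u\na u_t+\na\varphi_t\na^2 u+\na\varphi\na^2 u_t+\cdots$, and by H\"older's inequality together with the Gagliardo-Nirenberg type embeddings of Lemma~\ref{le2.1} and the smallness hypothesis \eqref{equivalent'}, every such product can be dominated by $\de\|\na(u,\varphi)\|_{H^1}^2$ after using a Cauchy-Schwarz with small parameter to absorb $\na u_t$ contributions.

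The main obstacle I anticipate is the careful accounting of these nonlinear errors: one must verify that no term of the form $\|u_t\|_{L^2}^2$ or $\|\na u_t\|_{L^2}^2$ appears on the right without a small prefactor, and, more delicately, that the boundary integrals created when integrating by parts $\int u_t\cdot\na\diver\varphi_t\,dx$ and $\int u_t\cdot\na\phi_t\,dx$ really do vanish under both the Dirichlet and Neumann conditions on $\phi$. For the Neumann case in particular, the identity $\int u_t\cdot\na\phi_t\,dx=-\int\diver u_t\,\phi_t\,dx$ needs $u_t|_{\pa\Omega}=0$, which is available. Once these points are verified, combining all the estimates yields \eqref{3.9}.
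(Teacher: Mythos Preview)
Your proposal is correct and follows essentially the same approach as the paper: time-differentiate the system, test with $u_t$ and $-\Delta\varphi_t$, use the divergence of the $\varphi_t$-equation and the time-differentiated Poisson equation to generate the energy terms, and then bound the remainders by trading time derivatives of $\varphi,\phi$ for spatial derivatives via the equations (the paper records these as the auxiliary estimates \eqref{e1'}--\eqref{e1''}). One small slip: when you write ``from $L_{1,t}=R_{1,t}$ one may express $u_t$'' and ``from $L_{2,t}=R_{2,t}$ one has $\varphi_t=-u+R_2$'', you mean the \emph{undifferentiated} equations $L_1=R_1$ and $L_2=R_2$.
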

\begin{proof}
From \eqref{1-20-1}--\eqref{1-20-3}, we obtain
\begin{align}\label{1-20-4}
\begin{cases}
\diver \varphi_{tt}=\Delta \phi_{tt}-O(|\na\varphi|^2)_{tt},\\
\diver  u_t=-\diver \varphi_{tt}-\diver (u\cdot\na\varphi)_t,\\
O(|\na\varphi|^2)_{tt}=-\diver [(\diver \varphi+O(|\na\varphi|^2))u]_t+\diver (u\cdot\na\varphi)_t.
\end{cases}
\end{align}
From the boundary conditions for $(u,\varphi,\phi)$ in \eqref{2.8'}, we have
\begin{align}\label{1-20-5}
\begin{cases}
u_t\mid_{\pa\Omega}=\varphi_t\mid_{\pa\Omega}=0,\\
\phi_t\mid_{\pa\Omega}=0\quad\mbox{or}\quad\na\phi_{tt}\cdot\nu\mid_{\pa\Omega}=0.
\end{cases}
\end{align}
By \eqref{1-20-4}--\eqref{1-20-5}, we can integrate the identity $u_{t}\cdot(\pa_tL_{1}-\pa_tR_{1})-\Delta\varphi_{t}\cdot(\pa_tL_{2}-\pa_tR_{2})=0$ over $\Omega$ by parts to obtain
\begin{align}\label{3.10}
\frac{1}{2}&\frac{d}{dt}\int_{\Omega}(|u_{t}|^2+|\diver \varphi_{t}|^2+|\na\varphi_{t}|^2+|\na\phi_{t}|^2)\,dx+\int_{\Omega}[\mu|\na u_{t}|^2+(\mu+\lambda)|\diver  u_{t}|^2+|u_{t}|^2]\,dx\nonumber\\
&=\int_{\Omega}[\pa_tR_{1}\cdot u_{t}-\pa_tR_{2}\cdot\De\varphi_{t}-\diver (u\cdot\na\varphi)_{t} \diver \varphi_{t}]\,dx-
\int_\Omega \na\phi_{t}\cdot[(\diver \varphi+O(|\na\varphi|^2))u]_{t}\,dx\nonumber\\
&:=J_1+J_2+J_3+J_4,
\end{align}
where we have used the facts
\begin{align*}
\int_{\Omega}u_{t}\cdot\na\diver\varphi_{t} \,dx&=-\int_{\Omega}\diver u_{t}\diver\varphi_{t} \,dx=\int_{\Omega}[\diver \varphi_{tt}+\diver (u\cdot\na\varphi)_{t}]\diver\varphi_{t} \,dx\nonumber\\
 &=\frac{1}{2}\frac{d}{dt}\int_{\Omega}
 |\diver \varphi_{t}|^2\,dx+\int_{\Omega}\diver(u\cdot\na\varphi)_{t}\diver\varphi_{t} \,dx
\end{align*}
and
\begin{align*}
-\int_{\Omega}u_{t}\cdot\na\phi_{t}\,dx
&=\int_{\Omega}\diver  u_{t}\phi_{t} \,dx=\int_{\Omega}[-\diver \varphi_{tt}-\diver (u\cdot\na\varphi)_{t}]\phi_{t} \,dx\nonumber\\
&=-\int_{\Omega}\Delta\phi_{tt}\phi_{t}\,dx+\int_{\Omega} O(|\na\varphi|^2)_{tt}\phi_{t} \,dx+\int_{\Omega}(u\cdot\na\varphi)_{t}\cdot\na\phi_{t} \,dx\nonumber\\
&=\frac{1}{2}\frac{d}{dt}\int_{\Omega}|\na\phi_{t}|^2\,dx+\int_{\Omega} [O(|\na\varphi|^2)_{tt}\phi_{t}+(u\cdot\na\varphi)_{t}\cdot\na\phi_{t}] \,dx\nonumber\\
&=\frac{1}{2}\frac{d}{dt}\int_{\Omega}|\na\phi_{t}|^2\,dx+\int_{\Omega} \na\phi_{t}\cdot[(\diver \varphi+O(|\na\varphi|^2))u]_{t} \,dx.
\end{align*}
Before estimating the above terms $J_1-J_4$, we derive some auxiliary estimates. Multiplying \eqref{1-20-1} by $-\phi_{t}$, and integrating the identity over $\Omega$, by \eqref{1-20-3}, \eqref{1-20-5} and Lemma \ref{le2.1}, we obtain
\begin{align*}
\|\na\phi_t\|_{L^2}^2=\int_{\Omega}|\na\phi_{t}|^2\,dx&=-\int_{\Omega}[\diver \varphi_{t}+O(|\na\varphi|^2)_{t}]\phi_{t}\,dx\nonumber\\
&=-\int_{\Omega}\{\diver \varphi_{t}-\diver [(\diver \varphi+O(|\na\varphi|^2))u]+\diver (u\cdot\na\varphi)\}\phi_{t}\,dx\nonumber\\
&=\int_{\Omega}[\varphi_{t}-(\diver \varphi+O(|\na\varphi|^2))u+u\cdot\na\varphi]\cdot\na\phi_{t}\,dx\nonumber\\
&\lesssim\|\varphi_{t}\|_{L^2} \|\na\phi_{t}\|_{L^2}+\|\na\varphi\|_{L^3}\|u\|_{L^6}\|\na\phi_{t}\|_{L^2}
+\|\na\varphi\|_{L^6}^2\|u\|_{L^6}\|\na\phi_{t}\|_{L^2}\nonumber\\
&\lesssim\|\varphi_{t}\|_{L^2}\|\na\phi_{t}\|_{L^2}+\|\na\varphi\|_{L^3}\|\na u\|_{L^2} \|\na\phi_{t}\|_{L^2}+\|\na\varphi\|_{L^6}^2\|\na u\|_{L^2}\|\na\phi_{t}\|_{L^2},
\end{align*}
which, together with Cauchy's inequality and $\|\varphi_{t}\|_{L^2}\lesssim\|u\|_{L^2}+\|\na\varphi\|_{L^\infty}\|u\|_{L^2}\lesssim\|u\|_{L^2}$, infers
\begin{align}\label{e1'}
\|\na\phi_t\|_{L^2}\lesssim\|u\|_{L^2}+\de\|\na u\|_{L^2}.
\end{align}
By $\eqref{2.8}_2$, H\"{o}lder's inequality and Lemma \ref{le2.1}, we easily obtain
\begin{align}\label{e1}
\begin{cases}
\|\na\varphi_t\|_{L^2}\lesssim \|\na u\|_{L^2},\\
\|\na^2\varphi_t\|_{L^2}\lesssim \|\na^2u\|_{L^2}+\de\|\na u\|_{H^1},\\
\|\na\varphi_{tt}\|_{L^2}\lesssim \|\na u_t\|_{L^2}+\de\|\na u\|_{H^1}.
\end{cases}
\end{align}
By Lemmas \ref{le2.3}--\ref{le2.4}, \eqref{1-20-1}, \eqref{e1'} and $\eqref{e1}_1$, we can deduce
\begin{align}\label{e1''}
\|\na^2\phi_t\|_{L^2}&\lesssim \|\De\phi_t\|_{L^2}+\|\na\phi_t\|_{L^2}\nonumber\\
&\lesssim \|\na\varphi_t\|_{L^2}+\|\na\varphi\|_{L^{\infty}}\|\na\varphi_t\|_{L^2}+\|\na \phi_t\|_{L^2}\nonumber\\
&\lesssim \|\na\varphi_t\|_{L^2}+\|\na \phi_t\|_{L^2}\nonumber\\
&\lesssim \|\na u\|_{L^2}+\|u\|_{L^2}+\de\|\na u\|_{L^2}\lesssim \|u\|_{H^1}.
\end{align}
Note that from \eqref{R12} and \eqref{R-1}
\begin{align}\label{e2}
\begin{cases}
\pa_tR_1\approx u_t\cdot\na u+u\cdot\na u_t+\na\varphi_t\na^2u+\na\varphi\na^2u_t+\na\varphi_t\na^2\varphi+\na\varphi\na^2\varphi_t+(|\na\varphi|^2)_t\na^2u\\
\qquad\quad+(|\na\varphi|^2)\na^2u_t+(|\na\varphi|^2)_t\na^2\varphi+(|\na\varphi|^2)\na^2\varphi_t,\\
\pa_tR_2=-u_t\cdot\na \varphi-u\cdot\na \varphi_t.
\end{cases}
\end{align}
Then, by \eqref{e1'}--\eqref{e2} and integrating by parts, we can use H\"{o}lder's inequality and Lemma \ref{le2.1} to bound the terms $J_1-J_4$:
\begin{align*}
J_1&=\int_{\Omega}\pa_tR_1\cdot u_t \,dx\lesssim\|u_t\|_{L^6}\|u_t\|_{L^2}\|\na u\|_{L^3}+\|u_t\|_{L^6}\|u\|_{L^3}\|\na u_t\|_{L^2}+\|u_t\|_{L^6}\|\na\varphi_t\|_{L^3}\|\na^2u\|_{L^2}\nonumber\\
&\quad+\|u_t\|_{L^6}\|\na^2\varphi\|_{L^3}\|\na u_t\|_{L^2}+\|\na u_t\|_{L^2}^2\|\na \varphi\|_{L^\infty}+\|u_t\|_{L^6}\|\na\varphi_t\|_{L^2} \|\na^2\varphi\|_{L^3}+\|u_t\|_{L^6}\|\na\varphi\|_{L^3}\|\na^2\varphi_t\|_{L^2}\nonumber\\
&\quad+\|\na\varphi\|_{L^\infty}\|\na\varphi_t\|_{L^3}\|\na^2u\|_{L^2}\|u_t\|_{L^6}
+\|\na\varphi\|_{L^\infty}\|\na^2\varphi\|_{L^3}\|\na u_t\|_{L^2}\|u_t\|_{L^6}
+\|\na\varphi\|_{L^\infty}^2\|\na u_t\|_{L^2}^2\\
&\quad+\|\na\varphi\|_{L^\infty}\|\na\varphi_t\|_{L^2}\|\na^2\varphi\|_{L^3}\|u_t\|_{L^6}
+\|\na\varphi\|_{L^\infty}\|\na\varphi\|_{L^3}\|u_t\|_{L^6}\|\na^2\varphi_t\|_{L^2}\nonumber\\
&\lesssim\de(\|\na u\|_{H^1}^2+\|\na u_t\|_{L^2}^2);\\
J_2&=-\int_{\Omega}\pa_tR_2\cdot\Delta\varphi_t \,dx\lesssim\|u_t\|_{L^6}\|\na\varphi\|_{L^3}\|\na^2\varphi_t\|_{L^2}+\|\na \varphi_t\|_{L^2}\|\na^2\varphi_t\|_{L^2} \|u\|_{L^\infty}\lesssim\de(\|\na u\|_{H^1}^2+\|\na u_t\|_{L^2}^2);\\
J_3&=\int_{\Omega}-\diver (u\cdot\na\varphi)_t\diver\varphi_t \,dx\lesssim\de(\|\na u\|_{H^1}^2+\|\na u_t\|_{L^2}^2);\\
J_4&=-\int_\Omega \na\phi_t\cdot[(\diver \varphi+O(|\na\varphi|^2))u]_t\,dx\lesssim\de(\|\na(u,\varphi)\|_{H^1}^2+\|\na u_t\|_{L^2}^2).
\end{align*}
Plugging the estimates for $J_1-J_4$ into \eqref{3.10}, since $\de$ is small, we get \eqref{3.9}.
\end{proof}

Next, we give a crucial lemma about the dissipation estimate for $\na\phi,\na\varphi$.

\begin{Lemma}\label{le3.3}
Let $\Omega\subset\r3$ be a half-space or an exterior domain with a compact boundary $\pa\Omega\in\mathcal{C}^3$. It holds that
\begin{align}\label{3.17}
\frac{d}{dt}\int_{\Omega}(\frac12|\varphi|^2-u\cdot\varphi)\,dx
+\|\na(\varphi,\phi)\|_{L^2}^2\lesssim\|u\|_{H^1}^2+\de\|\na^2(u,\varphi)\|_{L^2}^2.
\end{align}
\end{Lemma}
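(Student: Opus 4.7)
The goal is to produce a dissipation estimate for $\nabla\varphi$ and $\nabla\phi$, neither of which appears in Lemmas \ref{le3.1}--\ref{le3.2}. The strategy is to test the momentum equation $\eqref{2.8}_1$ against $-\varphi$: the Laplacian-type terms $\Delta\varphi$ and $\nabla\diver\varphi$ immediately produce the coercive quantities $\|\nabla\varphi\|_{L^2}^2$ and $\|\diver\varphi\|_{L^2}^2$, while the electrostatic term $-\nabla\phi\cdot(-\varphi)$, after integration by parts (using $\varphi|_{\pa\Omega}=0$) and substitution of the Poisson equation $\eqref{2.8}_3$, yields $\|\nabla\phi\|_{L^2}^2$ (here the Dirichlet and Neumann boundary conditions on $\phi$ work uniformly, since in both cases $\int_\Omega\phi\De\phi\,dx=-\|\nabla\phi\|_{L^2}^2$).

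The structural identity $\frac12\frac{d}{dt}|\varphi|^2+u\cdot\varphi=R_2\cdot\varphi$ obtained from $\eqref{2.8}_2$ supplies the $\frac{d}{dt}\int\tfrac12|\varphi|^2\,dx$ piece of the left-hand side; the unwanted $\int u\cdot\varphi\,dx$ it produces will cancel against the corresponding linear contribution coming from the $u$-term in $\eqref{2.8}_1$. Simultaneously, the time derivative $-\int u_t\cdot\varphi\,dx$ is rewritten as $-\frac{d}{dt}\int u\cdot\varphi\,dx+\int u\cdot\varphi_t\,dx$ and $\varphi_t$ is replaced using $\eqref{2.8}_2$; this produces exactly the compound functional $\frac{d}{dt}\int(\tfrac12|\varphi|^2-u\cdot\varphi)\,dx$ appearing in \eqref{3.17}, together with a $-\|u\|_{L^2}^2$ error that is absorbed into the $\|u\|_{H^1}^2$ on the right-hand side.

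What remains is to dominate the cross terms and the nonlinear remainders. The viscous cross terms $\mu\int\nabla u:\nabla\varphi$ and $(\mu+\lambda)\int\diver u\diver\varphi$ are handled by Cauchy's inequality, giving $C\|\nabla u\|_{L^2}^2+\tfrac14\|\nabla\varphi\|_{L^2}^2+\tfrac14\|\diver\varphi\|_{L^2}^2$, the latter two absorbed on the left. The nonlinearity $\int\phi\,O(|\nabla\varphi|^2)\,dx$ is bounded via H\"older, $\|\phi\|_{L^6}\lesssim\|\nabla\phi\|_{L^2}$ and $\|\nabla\varphi\|_{L^3}\lesssim\delta$, giving an $O(\delta)(\|\nabla\phi\|_{L^2}^2+\|\nabla\varphi\|_{L^2}^2)$ contribution. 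For $\int R_1\cdot\varphi$ I would use \eqref{R_1-L6/5} together with $\|\varphi\|_{L^6}\lesssim\|\nabla\varphi\|_{L^2}$, producing $\delta(\|\nabla u\|_{L^2}^2+\|\nabla\varphi\|_{L^2}^2+\|\nabla^2\varphi\|_{L^2}^2)$; the remaining terms $\int u\cdot R_2\,dx$ and $\int R_2\cdot\varphi\,dx$ involve only $R_2=-u\cdot\nabla\varphi$ and are directly bounded by $\delta(\|u\|_{H^1}^2+\|\nabla\varphi\|_{L^2}^2)$ using the a priori smallness \eqref{equivalent'}.

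The only mildly delicate point is ensuring that after all absorption only $\|u\|_{H^1}^2$ and $\delta\|\nabla^2(u,\varphi)\|_{L^2}^2$ remain on the right; in particular the $\|\nabla^2\varphi\|_{L^2}^2$ contribution from $\|R_1\|_{L^{6/5}}$ and from controlling $\|\nabla\varphi\|_{L^3}$ via Gagliardo--Nirenberg is exactly the $\delta\|\nabla^2\varphi\|_{L^2}^2$ tolerated by the statement, so no higher-order norm is incurred. No Poincar\'e-type inequality for $u$ is used, which is essential since $\Omega$ is unbounded; the inequality $\|u\|_{H^1}^2$ on the right is harmless because it will be absorbed later once the velocity dissipation from Lemmas \ref{le3.1}--\ref{le3.2} is combined with \eqref{3.17} in the total energy functional.
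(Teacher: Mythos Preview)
Your proposal is correct and follows essentially the same approach as the paper: the paper phrases it as multiplying $\eqref{2.8}_1$ by $-\varphi$ and $\eqref{2.8}_2$ by $\varphi-u$ and summing, which is algebraically identical to your two-step procedure of testing $\eqref{2.8}_1$ against $-\varphi$, adding $\eqref{2.8}_2\cdot\varphi$, and then substituting $\varphi_t=-u+R_2$ in the $-\int u_t\cdot\varphi$ term. The handling of the electrostatic term via the Poisson equation, the viscous cross terms via Cauchy's inequality, and the nonlinear remainders via \eqref{R_1-L6/5} and Sobolev embedding all match the paper's treatment.
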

\begin{proof}
Multiplying Eq. $\eqref{2.8}_{1}$, Eq. $\eqref{2.8}_{2}$ by $-\varphi$, $\varphi-u$, respectively, summing them up and integrating the resulting identity over $\Omega$ by parts, by the boundary conditions in \eqref{2.8'}, \eqref{R_1-L6/5}, H\"{o}lder's and Cauchy's inequalities and Lemma \ref{le2.1}, we obtain for any $\va>0$,
\begin{align}\label{3.20}
&\frac{d}{dt}\int_{\Omega}(\frac12|\varphi|^2-u\cdot\varphi)\,dx+\int_{\Omega}(|\na\varphi|^2+|\diver \varphi|^2+|\na\phi|^2)\,dx\nonumber\\
&=\int_{\Omega}u^2\,dx-\int_{\Omega}\phi O(|\na\varphi|^2)\,dx-\int_{\Omega}[\mu\De u+(\mu+\lambda)\na\diver  u]\cdot\varphi\,dx\nonumber\\
&\quad-\int_{\Omega}R_1\cdot\varphi\,dx+\int_{\Omega}R_2\cdot(\varphi-u)\,dx\nonumber\\
&\lesssim\|u\|_{L^2}^2+\|\phi\|_{L^6}\|\na\varphi\|_{L^3}\|\na\varphi\|_{L^2}+\|\na u\|_{L^2}\|\na \varphi\|_{L^2}+\|R_1\|_{L^{6/5}}\|\varphi\|_{L^6}+\|R_2\|_{L^{6/5}}\|\varphi-u\|_{L^6}\nonumber\\
&\lesssim\|u\|_{L^2}^2+C_\va\|\na u\|_{L^2}^2+\va\|\na\varphi\|_{L^2}^2+\de\|\na(u,\varphi)\|_{H^1}^2.
\end{align}
Here, using the boundary conditions for $\phi$ in \eqref{2.8'}, by $\eqref{2.8}_3$, we have calculated
\begin{align*}
\int_{\Omega}\na\phi\cdot\varphi \,dx&=-\int_{\Omega}\phi\diver\varphi \,dx\nonumber\\
&=-\int_{\Omega}\phi\Delta\phi \,dx+\int_{\Omega}\phi O(|\na\varphi|^2) \,dx\nonumber\\
&=\int_{\Omega}|\na\phi|^2\,dx+\int_{\Omega}\phi O(|\na\varphi|^2) \,dx.
\end{align*}
Thus, taking $\va>0$ to be small enough, since $\de$ is small, we deduce \eqref{3.17} from \eqref{3.20}.
\end{proof}

\section{Higher-order Energy Estimates in Half-spaces}\label{se5}

In this section, we focus on the higher-order energy estimates for $(u,\varphi,\phi)$ in the half-space $\Omega=\r3_{+}=\{x\in\mathbb{R}^3: x_3>0\}$. Note that the tangential derivatives of a function still vanish on the boundary $\pa\Omega=\{x_3=0\}$ if the function values zero on $\pa\Omega$. So, we divide the energy estimates into the tangential derivatives estimates and the normal derivatives estimates. We denote the tangential derivatives $\pa=(\pa_{x_1},\pa_{x_2})$. We first establish the higher-order tangential derivatives estimates for $(u,\varphi,\phi)$ in the half-space $\Omega=\r3_{+}$.

\begin{Lemma}\label{le3.5'}
Let $\Omega=\r3_{+}$. It holds that
\begin{align}
&\frac{d}{dt}\|\pa(u,\diver\varphi,\na\varphi,\na\phi)\|_{L^2}^2
+\|\pa u\|_{L^2}^2+\|\pa\na(u,\varphi,\phi)\|_{L^2}^2\lesssim \|\na(u,\varphi,u_{t})\|_{L^2}^2+\de(\|\na\phi\|_{L^2}^2+\|\na(u,\varphi)\|_{H^1}^2);\label{3.26'}\\
&\frac{d}{dt}\|\pa^2(u,\diver \varphi,\na\varphi,\na\phi)\|_{L^2}^2
 +\|\pa^2u\|_{L^2}^2+\|\pa^2\na(u,\varphi,\phi)\|_{L^2}^2\lesssim \|\na (u,u_{t})\|_{L^2}^2+\de\|\na(u,\varphi)\|_{H^2}^2.\label{3.27'}
\end{align}
\end{Lemma}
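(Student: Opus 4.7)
My plan is to mimic, on the tangentially differentiated system, the two complementary low-order energy identities already established in Section~\ref{se3}: the basic energy identity from Lemma~\ref{le3.1} and the $\na(\varphi,\phi)$-dissipation identity from Lemma~\ref{le3.3}. The key structural observation that makes this work on the half-space is that $\pa=(\pa_{x_1},\pa_{x_2})$ is tangent to $\pa\Omega=\{x_3=0\}$, so $\pa$ commutes with the trace. Consequently $u|_{\pa\Omega}=\varphi|_{\pa\Omega}=0$ propagate to $\pa^k u|_{\pa\Omega}=\pa^k\varphi|_{\pa\Omega}=0$ for any $k\ge 1$, and the condition $\phi|_{\pa\Omega}=0$ (or $\na\phi\cdot\nu|_{\pa\Omega}=0$) likewise transfers to $\pa^k\phi$ (resp.\ $\na\pa^k\phi\cdot\nu$). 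Hence every integration by parts used in the proofs of Lemmas~\ref{le3.1} and~\ref{le3.3} carries over verbatim when the test functions are replaced by their $\pa^k$-versions without producing boundary terms.

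For \eqref{3.26'} I would apply $\pa$ to the reformulated system \eqref{2.8} and run two sub-computations in parallel. \emph{Sub-computation A}: test the $\pa$-momentum equation against $\pa u$ and the $\pa$-second equation against $-\De\pa\varphi$, reproducing the chain leading to \eqref{3.2}. The Poisson term is handled by the $\pa$-version of \eqref{1-20-1}--\eqref{1-20-3}, which converts $\int \pa u\cdot\pa\na\diver\varphi$ and $\int \pa u\cdot\pa\na\phi$ into $\tfrac{d}{dt}\tfrac12\|\pa\diver\varphi\|_{L^2}^2$ and $\tfrac{d}{dt}\tfrac12\|\pa\na\phi\|_{L^2}^2$ plus commutator remainders, yielding
\begin{align*}
\tfrac{d}{dt}\|\pa(u,\diver\varphi,\na\varphi,\na\phi)\|_{L^2}^2+\|(\pa u,\pa\na u)\|_{L^2}^2\lesssim (\mbox{commutators}).
\end{align*}
\emph{Sub-computation B}: test the $\pa$-momentum equation against $-\pa\varphi$ and the $\pa$-second equation against $\pa\varphi-\pa u$, as in \eqref{3.20}, to obtain
\begin{align*}
\tfrac{d}{dt}\!\int_\Omega\!\!\bigl(\tfrac12|\pa\varphi|^2-\pa u\cdot\pa\varphi\bigr)dx+\|\pa\na(\varphi,\phi)\|_{L^2}^2\lesssim\|\pa u\|_{L^2}^2+C_\va\|\pa\na u\|_{L^2}^2+\va\|\pa\na\varphi\|_{L^2}^2+(\mbox{commutators}).
\end{align*}
Adding Sub-computation~B multiplied by a small $\eta>0$ to Sub-computation~A produces an energy functional equivalent to $\|\pa(u,\diver\varphi,\na\varphi,\na\phi)\|_{L^2}^2$ (since $\eta$ times the cross term is absorbable) whose time derivative, together with the full tangential dissipation $\|\pa u\|_{L^2}^2+\|\pa\na(u,\varphi,\phi)\|_{L^2}^2$, is bounded by the stated right-hand side.

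The commutator terms mirror $I_1$--$I_4$ of Lemma~\ref{le3.1}, now carrying one more tangential derivative. They split into (i) trilinear remainders coming from $\pa R_1$, $\pa R_2$, $\pa(u\cdot\na\varphi)$ and $\pa[(\diver\varphi+O(|\na\varphi|^2))u]$, each controlled by $\de(\|\na(u,\varphi)\|_{H^1}^2+\|\na\phi\|_{L^2}^2)$ via the a priori smallness \eqref{equivalent'}, the product/commutator bounds of Lemma~\ref{le2.2}, and Gagliardo--Nirenberg (Lemma~\ref{le2.1}); and (ii) bilinear pieces absorbable into $\|\na(u,\varphi)\|_{L^2}^2$. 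The term $\|\na u_t\|_{L^2}^2$ on the right of \eqref{3.26'} enters when the contribution $-\int\pa u_t\cdot\pa\varphi$ coming from $\tfrac{d}{dt}\!\int(-\pa u\cdot\pa\varphi)$ in Sub-computation~B is controlled directly (rather than eliminated by substituting $u_t$); it will be closed later via Lemma~\ref{le3.2}. The estimate \eqref{3.27'} is then obtained by repeating the entire argument with $\pa$ replaced by $\pa^2$: no new boundary difficulty appears, and commutators gain one derivative, handled by the same $H^2$-smallness.

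The main difficulty, in my view, is not any single computation but the \textbf{bookkeeping} of nonlinear remainders: one must verify that every commutator containing the Poisson pair $(\na\phi,\diver\varphi)$ can be rewritten so only the \emph{safe} combinations $\tfrac{d}{dt}\|\pa^k\na\phi\|_{L^2}^2$ and $\|\pa^k\na\phi\|_{L^2}^2$ appear structurally --- never $\|\pa^k\na^2\phi\|_{L^2}^2$, which is not yet controlled at this stage --- and that the genuinely nonlinear residue is absorbable into $\de\|\na(u,\varphi)\|_{H^1}^2$. Keeping track of which of $\pa$, $\pa^2$, or mixed $\pa^k\na^j$ falls on each factor, and maintaining the $L^2$--$L^3$--$L^\infty$ Hölder balance so that no high-frequency factor exceeds $H^2$, is the delicate part.
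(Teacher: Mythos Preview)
Your Sub-computation~A is exactly the paper's Step~1: testing $\pa L_1$ against $\pa u$ and $\pa L_2$ against $-\De\pa\varphi$ gives \eqref{7-15-1}, which is the $\pa$-analogue of Lemma~\ref{le3.1}. The difference lies in Sub-computation~B. You mimic Lemma~\ref{le3.3} in full, testing $\pa L_1$ against $-\pa\varphi$ \emph{and} $\pa L_2$ against $\pa\varphi-\pa u$, which produces a time-derivative contribution $\tfrac{d}{dt}\!\int(\tfrac12|\pa\varphi|^2-\pa u\cdot\pa\varphi)$. The paper's Step~2 is simpler: it tests \emph{only} $\pa L_1$ against $\pa\varphi$ and obtains the purely algebraic inequality
\[
\|(\pa\na\varphi,\pa\na\phi)\|_{L^2}^2\lesssim \|(\na\varphi,\na u_t,\na u,\pa\na u)\|_{L^2}^2+\de\|\na^2(u,\varphi)\|_{L^2}^2,
\]
with no $\tfrac{d}{dt}$ at all; the term $\int\pa u_t\cdot\pa\varphi$ is bounded directly by $\|\na u_t\|_{L^2}\|\na\varphi\|_{L^2}$, which is where $\|\na u_t\|_{L^2}^2$ enters. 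Adding $\eta$ times this algebraic bound to \eqref{7-15-1} yields \eqref{3.26'} exactly, with the stated energy functional and no extra pieces. Your route also works, but the additional energy term $\eta\int(\tfrac12|\pa\varphi|^2-\pa u\cdot\pa\varphi)$ is not equivalent to the stated energy (since $\|\pa\varphi\|_{L^2}^2$ is not controlled by $\|\pa(u,\na\varphi,\na\phi)\|_{L^2}^2$ alone), so you are forced --- as you note --- to unwrap the cross term and estimate $\int\pa u_t\cdot\pa\varphi$ by hand anyway, at which point you have essentially reproduced the paper's algebraic Step~2 by a detour. The paper's approach buys a cleaner match with the stated energy in \eqref{3.26'}; your approach is closer in spirit to the lower-order Lemma~\ref{le3.3} and would be more natural if one were content with an equivalent-energy formulation.
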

\begin{proof}
{\it Step 1.}
For the case of the half-space $\Omega=\r3_{+}$, we deduce from the boundary conditions in \eqref{2.8'} that for $k=1,2$,
\begin{align}\label{tangential}
\begin{cases}
\pa^{k}u\mid_{\pa\Omega}=\pa^{k}\varphi\mid_{\pa\Omega}=0,\\
\pa^{k}\phi\mid_{\pa\Omega}=0\quad\mbox{or}\quad\pa^{k}\na\phi\cdot\nu\mid_{\pa\Omega}=0.
\end{cases}
\end{align}
Integrating the identity $\pa(L_1-R_1)\cdot\pa u+\pa(L_2-R_2)\cdot(-\pa\Delta \varphi)=0$ over $\Omega$ by parts, by \eqref{1-20-1}, \eqref{1-20-2} and \eqref{tangential}, we obtain
\begin{align}\label{3.30'}
\frac{1}{2}&\frac{d}{dt}\int_{\Omega}(|\pa u|^2+|\pa \diver \varphi|^2+|\pa \na\varphi|^2+|\pa \na\phi|^2)\,dx
+\int_{\Omega}[\mu|\pa \na u|^2+(\mu+\lambda)|\pa\diver  u|^2+|\pa u|^2]\,dx\nonumber\\
&=-\int_{\Omega}\pa\diver\varphi\cdot\pa\diver(u\cdot\na\varphi)\,dx
-\int_{\Omega}\pa[O(|\na\varphi|^2)_{t}-\diver (u\cdot\na\varphi)]\cdot \pa\phi\,dx\nonumber\\
&\quad+\int_{\Omega}\pa R_1\cdot\pa u\,dx-\int_{\Omega}\pa R_2\cdot\pa\Delta\varphi\,dx\nonumber\\
&:=H_1+H_2+H_3+H_4,
\end{align}
where we have computed
\begin{align*}
\int_{\Omega}&\pa\na\diver\varphi\cdot\pa u \,dx=-\int_{\Omega}\pa\diver\varphi\cdot\pa\diver u\,dx\\
&=\int_{\Omega}\pa\diver\varphi\cdot\pa [\diver  \varphi_{t}+\diver (u\cdot\na\varphi)]\,dx\\
&=\frac{1}{2}\frac{d}{dt}\int_{\Omega}|\pa\diver\varphi|^2\,dx
+\int_{\Omega}\pa\diver\varphi\cdot\pa\diver(u\cdot\na\varphi)\,dx
\end{align*}
and
\begin{align*}
-&\int_{\Omega}\pa\na\phi\cdot\pa u \,dx=\int_{\Omega}\pa\phi\cdot\pa\diver  u\,dx\nonumber\\
&=-\int_{\Omega}\pa[\Delta\phi_{t}-O(|\na\varphi|^2)_{t}+\diver (u\cdot\na\varphi)]\cdot \pa\phi \,dx\nonumber\\
&=\frac{1}{2}\frac{d}{dt}\int_{\Omega}|\pa\na\phi|^2\,dx+\int_{\Omega}\pa[O(|\na\varphi|^2)_{t}-\diver (u\cdot\na\varphi)]\cdot \pa\phi\,dx.
\end{align*}
Then, by integrating by parts, \eqref{R_1-L2}, \eqref{e1}, H\"{o}lder's and Cauchy's inequalities and Lemma \ref{le2.1}, we can bound the right-hand side of \eqref{3.30'}:
\begin{align*}
&H_1=-\int_{\Omega}\pa\diver\varphi\cdot\pa\diver (u\cdot\na\varphi)\,dx\lesssim\|\na^2\varphi\|_{L^2}(\|\na\varphi\|_{L^{\infty}}\|\na^2u\|_{L^2}
+\|\na u\|_{L^3}\|\na^2\varphi\|_{L^{6}}+\|u\|_{L^{\infty}}\|\na^3\varphi\|_{L^2})\nonumber\\
&\quad\lesssim\de(\|\na u\|_{H^1}^2+\|\na^2\varphi\|_{L^2}^2);
\\
&H_2=-\int_{\Omega}\pa[O(|\na\varphi|^2)_{t}-\diver (u\cdot\na\varphi)]\cdot \pa\phi \,dx\nonumber\\
&\quad\lesssim\|\na\phi\|_{L^2}(\|\na^2\varphi\|_{L^6}\|\na\varphi_t\|_{L^3}
+\|\na\varphi\|_{L^\infty}\|\na^2\varphi_t\|_{L^2}+\|\na^2\varphi\|_{L^6}\|\na u\|_{L^3}
+\|\na \varphi\|_{L^\infty}\|\na^2u\|_{L^2}+\|u\|_{L^{\infty}}\|\na^3\varphi\|_{L^2})\nonumber\\
&\quad\lesssim\de(\|\na u\|_{H^1}^2+\|\na\phi\|_{L^2}^2);
\\
&H_3=\int_{\Omega}\pa R_1\cdot\pa u\,dx=-\int_{\Omega}R_1\cdot\pa^2u\,dx\lesssim\int_{\Omega}|R_1|\cdot|\na^2u|\,dx\nonumber\\
&\quad\lesssim\|R_1\|_{L^2}\|\na^2u\|_{L^2}\lesssim\de(\|\na u\|_{H^1}+\|\na^2\varphi\|_{L^2})\|\na^2u\|_{L^2}\lesssim\de(\|\na u\|_{H^1}^2+\|\na^2\varphi\|_{L^2}^2);
\\
&H_4=-\int_{\Omega}\pa R_2\cdot\pa\Delta\varphi\,dx\lesssim\|\na u\|_{L^6}\|\na \varphi\|_{L^3}\|\na^3 \varphi\|_{L^2}+\|u\|_{L^\infty}\|\na^2\varphi\|_{L^2}\|\na^3\varphi\|_{L^2}\lesssim\de(\|\na u\|_{H^1}^2+\|\na \varphi\|_{H^1}^2).
\end{align*}
Putting the above estimates for $H_1,H_2,H_3,H_4$ into \eqref{3.30'}, we have
\begin{align}\label{7-15-1}
&\frac{d}{dt}\|(\pa u,\pa\diver\varphi,\pa\na\varphi,\pa\na\phi)\|_{L^2}^2
 +\|(\pa u,\pa\na u)\|_{L^2}^2\lesssim \de(\|\na\phi\|_{L^2}^2+\|\na(u,\varphi)\|_{H^1}^2).
\end{align}
Integrating the identity $\pa^2(L_1-R_1)\cdot\pa^2 u+\pa^2(L_2-R_2)\cdot(-\pa^2\Delta \varphi)=0$ over $\Omega$, like \eqref{7-15-1}, we have
\begin{align}\label{7-15-2}
&\frac{d}{dt}\|(\pa^2 u,\pa^2\diver\varphi,\pa^2\na\varphi,\pa^2\na\phi)\|_{L^2}^2
 +\|(\pa^2u,\pa^2\na u)\|_{L^2}^2\lesssim \de(\|\na u\|_{H^2}^2+\|\na\varphi\|_{H^2}^2).
\end{align}

{\it Step 2.}
Integrating the identity $\pa(L_1-R_1)\cdot\pa\varphi=0$ over $\Omega$ by parts, by $\eqref{2.8}_3$, \eqref{R_1-L2}, \eqref{1-20-1}, \eqref{1-20-2}, \eqref{tangential}, H\"{o}lder's and Cauchy's inequalities and Lemma \ref{le2.1}, we obtain for any $\va>0$,
\begin{align}\label{3.44'}
\int_{\Omega}&(|\pa\na\varphi|^2+|\pa\diver \varphi|^2+|\pa\na\phi|^2)\,dx\nonumber\\
&=-\int_{\Omega}\pa\phi\cdot \pa O(|\na\varphi|^2)\,dx+\int_{\Omega}\pa(u_{t}+u)\cdot\pa\varphi \,dx+\mu\int_{\Omega}\pa\na u\cdot\pa\na\varphi \,dx\nonumber\\
&\quad+(\mu+\lambda)\int_{\Omega}\pa\diver u\cdot\pa\diver\varphi \,dx+\int_{\Omega}R_1\cdot\pa^2\varphi \,dx\nonumber\\
&\lesssim\int_{\Omega}|\na\phi|\cdot|\na\varphi|\cdot|\na^2\varphi|\,dx+\int_{\Omega}(|\na u_{t}|+|\na u|)\cdot|\na\varphi| \,dx+\int_{\Omega}|\pa\na u|\cdot|\pa\na\varphi| \,dx+\int_{\Omega}|R_1|\cdot|\pa\na\varphi| \,dx\nonumber\\
&\lesssim\|\na\phi\|_{L^6}\|\na\varphi\|_{L^3}\|\na^2\varphi\|_{L^2}+\|\na u_{t}\|_{L^2}\|\na\varphi\|_{L^2}+\|\na u\|_{L^2} \|\na\varphi\|_{L^2}+\|\pa\na u\|_{L^2}\|\pa\na\varphi\|_{L^2}
+\|R_1\|_{L^2}\|\pa\na\varphi\|_{L^2}\nonumber\\
&\lesssim\|(\na\varphi,\na u_{t},\na u)\|_{L^2}^2+C_\va\|\pa\na u\|_{L^2}^2+\va\|\pa\na\varphi\|_{L^2}^2+\de(\|\pa\na\varphi\|_{L^2}^2+\|\na(u,\varphi)\|_{H^1}^2),
\end{align}
where we have computed
\begin{align*}
-&\int_{\Omega}\pa\na\phi\cdot\pa\varphi \,dx=\int_{\Omega}\pa\phi\cdot\pa\diver \varphi \,dx\nonumber\\
&=\int_{\Omega}\pa\phi\cdot\pa\De\phi \,dx-\int_{\Omega}\pa\phi\cdot \pa O(|\na\varphi|^2)\,dx\nonumber\\
&=-\int_{\Omega}|\pa\na\phi|^2\,dx-\int_{\Omega}\pa\phi\cdot \pa O(|\na\varphi|^2)\,dx.
\end{align*}
So, letting $\va>0$ be small enough, since $\de$ is small, we deduce from \eqref{3.44'} that
\begin{align}\label{7-15-3}
\|(\pa\na\varphi,\pa\na\phi)\|_{L^2}^2\lesssim \|(\na\varphi,\na u_{t},\na u,\pa\na u)\|_{L^2}^2+\de\|\na^2(u,\varphi)\|_{L^2}^2.
\end{align}
Integrating the identity $\pa^2(L_1-R_1)\cdot\pa^2\varphi=0$ over $\Omega$, similar to the proof of \eqref{7-15-3}, we can obtain
\begin{align}\label{7-15-4}
\|(\pa^2\na\varphi,\pa^2\na\phi)\|_{L^2}^2\lesssim \|(\na u_{t},\na u,\pa^2\na u)\|_{L^2}^2+\de\|\na(u,\varphi)\|_{H^2}^2.
\end{align}

{\it Step 3.} Let $\eta>0$ be a small but fixed constant. Computing $\eqref{7-15-3}\times\eta+\eqref{7-15-1}$ and $\eqref{7-15-4}\times\eta+\eqref{7-15-2}$, respectively, we deduce \eqref{3.26'} and \eqref{3.27'}.
\end{proof}

Next, we derive the estimates of the normal derivatives of $\diver \varphi$ in the half-space.
\begin{Lemma}\label{le3.8'}
Let $\Omega=\r3_{+}$. It holds that
\begin{align}\label{3.56'}
\frac{d}{dt}\|\pa_{x_3}\diver\varphi\|_{L^2}^2+\|\pa_{x_3}(\tfrac{D\diver\varphi}{Dt},\diver \varphi)\|_{L^2}^2\lesssim\|(u_{t},u,\na\phi)\|_{L^2}^2+\|\pa\na (u,\varphi)\|_{L^2}^2+\de\|(\na u,\na^2\varphi)\|_{H^1}^2
\end{align}
and
\begin{align}\label{3.57'}
\frac{d}{dt}\|\pa^{\kappa}\pa_{x_3}^{\iota+1}\diver\varphi\|_{L^2}^{2}
+\|\pa^{\kappa}\pa_{x_3}^{\iota+1}(\tfrac{D\diver\varphi}{Dt},\diver\varphi)\|_{L^2}^{2}\lesssim\|\na( u,u_{t},\na\phi)\|_{L^2}^2+\|\pa^{\kappa+1}\pa_{x_3}^{\iota}\na (u,\varphi)\|_{L^2}^2+\de\|\na(u,\varphi)\|_{H^2}^2,
\end{align}
where $\kappa+\iota=1$.
\end{Lemma}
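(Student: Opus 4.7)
\noindent\emph{Proof plan.}
The crucial observation is that $\pa_{x_3}\diver\varphi$ appears as a first-order term with unit coefficient in the third (normal) scalar component of the momentum equation $L_1=R_1$:
\[
u^3_t-\mu\De u^3-(\mu+\lambda)\pa_{x_3}\diver u+\De\varphi^3+\pa_{x_3}\diver\varphi-\pa_{x_3}\phi+u^3=R_1^3,
\]
so this is the natural identity from which to extract the dissipation of $\pa_{x_3}\diver\varphi$. I would test this equation against a combination of the form $\pa_{x_3}\diver\varphi+c\,\pa_{x_3}(D\diver\varphi/Dt)$ for a suitable small constant $c>0$ and integrate over $\Omega=\r3_{+}$. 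The product of $\pa_{x_3}\diver\varphi$ with itself produces the dissipation $\|\pa_{x_3}\diver\varphi\|_{L^2}^2$ on the left. To generate $\|\pa_{x_3}(D\diver\varphi/Dt)\|_{L^2}^2$, I exploit the identity $D\diver\varphi/Dt=-\diver u-(\na u):(\na\varphi)^{T}$ (obtained by taking the divergence of $\eqref{2.8}_{2}$), which gives $\pa_{x_3}(D\diver\varphi/Dt)=-\pa_{x_3}\diver u+$ small nonlinear products; then the pairing $-(\mu+\lambda)\pa_{x_3}\diver u\cdot\pa_{x_3}(D\diver\varphi/Dt)$ yields $(\mu+\lambda)\|\pa_{x_3}(D\diver\varphi/Dt)\|_{L^2}^{2}$ modulo $\delta$-small terms. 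The time-derivative $\frac{d}{dt}\|\pa_{x_3}\diver\varphi\|_{L^{2}}^{2}$ arises from the pairing of $\pa_{x_3}\diver\varphi$ with $\pa_{x_3}(D\diver\varphi/Dt)$ after using $\pa_{x_3}\diver\varphi_t\approx -\pa_{x_3}\diver u$ from $L_2$.

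The routine pieces of the identity are handled as follows. The terms $u^3_t$, $u^3$, $\pa_{x_3}\phi$, $R_1^3$, paired against the test function, are controlled by Cauchy--Schwarz plus Young, giving precisely the contributions $\|(u_t,u,\na\phi)\|_{L^2}^{2}$ on the right-hand side of \eqref{3.56'} and, using \eqref{R_1-L2} and \eqref{na-R_1-L2}, absorbing the $R_1$ remainder into $\delta\|(\na u,\na^2\varphi)\|_{H^1}^2$. The tangential pieces of $\De u^3=(\pa_{x_1}^2+\pa_{x_2}^2)u^3+\pa_{x_3}^2u^3$ (and analogously for $\De\varphi^3$) are reduced, via tangential integration by parts and the fact that tangential derivatives of $u^3$, $\varphi^3$ vanish on $\{x_3=0\}$, to products of the form $\|\pa\na\varphi\|\|\pa\na u\|$, which fit into the $\|\pa\na(u,\varphi)\|_{L^2}^{2}$ slot on the right.

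The main obstacle is the pure normal second derivative $\pa_{x_3}^{2}u^{3}$ (and $\pa_{x_3}^{2}\varphi^{3}$): testing against $\pa_{x_3}\diver\varphi$ cannot directly produce $\|\na^{2}u\|_{L^2}^{2}$, because that norm is not available on the right-hand side of the lemma. The fix is algebraic: write $\pa_{x_3}^{2}u^{3}=\pa_{x_3}\diver u-\pa_{x_3}\pa_{x_1}u^{1}-\pa_{x_3}\pa_{x_2}u^{2}$, so that the two cross-derivatives are tangential--normal and controlled by $\|\pa\na u\|_{L^2}^{2}$, while the remaining $\pa_{x_3}\diver u$ is exchanged, through the transport identity $\diver u=-\diver\varphi_t+\diver R_2$, for $-\pa_{x_3}\diver\varphi_t$ plus a $\delta$-small nonlinear remainder; this exchange manufactures exactly the time derivative $\frac{d}{dt}\|\pa_{x_3}\diver\varphi\|_{L^2}^{2}$ appearing on the left of \eqref{3.56'} together with the dissipation $\|\pa_{x_3}(D\diver\varphi/Dt)\|_{L^2}^{2}$. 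An identical manipulation applies to $\pa_{x_3}^{2}\varphi^{3}$ via $-\De\varphi^3$. Integrating by parts in $x_{3}$ produces boundary integrals on $\{x_{3}=0\}$; the no-slip and Dirichlet/Neumann conditions in \eqref{2.8'}, together with the fact that tangential derivatives of $u^{3}$ and $\varphi^{3}$ vanish on the boundary, eliminate the dangerous ones, leaving at worst quantities like $\int_{x_3=0}\pa_{x_3}u\,\pa_{x_3}\varphi\,dx'$ which are absorbed by the trace theorem into $\|\pa\na(u,\varphi)\|_{L^{2}}^{2}+\delta(\cdots)$.

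For the higher-order statement \eqref{3.57'} with $\kappa+\iota=1$, I apply $\pa^{\kappa}\pa_{x_3}^{\iota}$ to the third component of $L_{1}-R_{1}=0$, test against $\pa^{\kappa}\pa_{x_3}^{\iota}\bigl(\diver\varphi+c\,(D\diver\varphi/Dt)\bigr)$, and repeat the steps above. The commutator terms $[\pa^{\kappa}\pa_{x_3}^{\iota},u\cdot\na]\diver\varphi$ and $[\pa^{\kappa}\pa_{x_3}^{\iota},u\cdot\na]\na\varphi$ that arise in the forcing are bounded via Lemma \ref{le2.2} and the a priori smallness \eqref{equivalent'}, contributing the $\de\|\na(u,\varphi)\|_{H^2}^{2}$ piece of the right-hand side of \eqref{3.57'}. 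The normal-direction boundary integrals at this order again collapse to tangential-derivative norms already controlled in Lemma \ref{le3.5'}, and the extra tangential derivative $\pa^{\kappa+1}\pa_{x_3}^{\iota}\na(u,\varphi)$ on the right-hand side of \eqref{3.57'} is precisely the expected output of the tangential IBP step applied at one order higher.
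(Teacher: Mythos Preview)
Your approach is essentially the same as the paper's, and the key algebraic idea --- writing $\pa_{x_3}^{2}u^{3}=\pa_{x_3}\diver u-\pa_{x_3}\pa_{x_1}u^{1}-\pa_{x_3}\pa_{x_2}u^{2}$ (and likewise for $\varphi^{3}$) so that the only normal second derivatives are replaced by $\pa_{x_3}\diver u$ and $\pa_{x_3}\diver\varphi$ --- is exactly right. The paper packages this more cleanly: it first rewrites the momentum equation as
\[
u_t-\mu\De(u-\tfrac{\varphi}{\mu})+\mu\na\diver(u-\tfrac{\varphi}{\mu})
+(2\mu+\lambda)\na\bigl[\tfrac{D\diver\varphi}{Dt}+(\na u)^{T}\!:\!\na\varphi\bigr]
+2\na\diver\varphi-\na\phi+u=R_1,
\]
and then, taking the third component, obtains a closed scalar identity
\[
(2\mu+\lambda)\pa_{x_3}\bigl(\tfrac{D\diver\varphi}{Dt}\bigr)+2\pa_{x_3}\diver\varphi=\mathcal{R},
\]
where $\mathcal{R}$ contains only $u_t,u,\na\phi$, tangential--normal derivatives of $(u-\tfrac{\varphi}{\mu})$ (hence $\lesssim\|\pa\na(u,\varphi)\|_{L^2}$), and quadratic remainders. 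It then multiplies by $\pa_{x_3}\bigl(\tfrac{D\diver\varphi}{Dt}+\diver\varphi\bigr)$ (that is, $c=1$, not small) and integrates; Cauchy--Schwarz on the right-hand side is enough, with no integration by parts at all.

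The one genuine confusion in your write-up is the discussion of boundary integrals and the trace theorem. Once the algebraic substitution is made, the equation is a pointwise scalar identity and you simply multiply by the test function and integrate over $\Omega$; no integration by parts in $x_{3}$ (or any direction) is performed, so no boundary terms appear. Your proposed handling of a term like $\int_{\{x_3=0\}}\pa_{x_3}u\,\pa_{x_3}\varphi\,dx'$ via the trace theorem would in fact \emph{not} fit into the right-hand side of \eqref{3.56'} (it would produce $\|\na u\|_{H^1}\|\na\varphi\|_{H^1}$ with no small prefactor), so it is fortunate that this term never arises. Dropping that paragraph and deriving the scalar identity first makes the argument both shorter and correct.
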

\begin{proof}
Denote the material derivative
\begin{align*}
\tfrac{D f}{Dt}:=\pa_tf+u\cdot\na f.
\end{align*}
Applying the divergence operator $\diver $ to both sides of $\eqref{2.3}$, we have
\begin{align}\label{ncu}
\diver  u=-\tfrac{D\diver\varphi}{Dt}-(\na u)^{T}:\na\varphi.
\end{align}
Plugging \eqref{ncu} into Eq. $\eqref{2.8}_{1}$, we obtain
\begin{align*}
u_{t}-\mu\Delta (u-\tfrac{\varphi}{\mu})+\mu\na\diver  (u-\tfrac{\varphi}{\mu})+(2\mu+\lambda)\na[\tfrac{D\diver \varphi}{Dt}+(\na u)^{T}:\na\varphi]+2\na \diver \varphi-\na\phi+u=R_1.
\end{align*}
Choosing the third component of the above identity, we have
\begin{align}\label{3.63'}
&(2\mu+\lambda)\pa_{x_{3}}(\tfrac{D\diver\varphi}{Dt})+2\pa_{x_3}\diver\varphi=\mathcal{R},
\end{align}
where
\begin{align*}
\mathcal{R}
&:=-\pa_tu_3+\mu[\pa_{x_1x_1} (u-\frac{\varphi}{\mu})_3+\pa_{x_2x_2} (u-\frac{\varphi}{\mu})_3-\pa_{x_1x_3} (u-\frac{\varphi}{\mu})_1-\pa_{x_2x_3} (u-\frac{\varphi}{\mu})_2]\nonumber\\
&\quad-(2\mu+\lambda)\pa_{x_{3}}[(\na u)^{T}:\na\varphi]+\pa_{x_{3}}\phi-u_3+(R_1)_3.
\end{align*}
Multiplying \eqref{3.63'} by $\pa_{x_{3}}(\tfrac{D\diver\varphi}{Dt}+\diver\varphi)$ and then integrating it over $\Omega$, we obtain
\begin{align}\label{3.64'}
&\frac{2+2\mu+\lambda}{2}\frac{d}{dt}\|\pa_{x_3}\diver\varphi\|_{L^2}^2+(2\mu+\lambda)\|\pa_{x_3}(\tfrac{D\diver \varphi}{Dt})\|_{L^2}^2+2\|\pa_{x_3}\diver\varphi\|_{L^2}^2\nonumber\\
&\quad=-(2+2\mu+\lambda)\int_{\Omega}(u\cdot\na\diver\varphi)_{x_3}\diver \varphi_{x_3}\,dx+\int_{\Omega}\pa_{x_{3}}(\tfrac{D\diver\varphi}{Dt}+\diver\varphi)\mathcal{R}\,dx\nonumber\\
&\quad:=N_1+N_2.
\end{align}
Then, we can easily estimate the right-hand side of \eqref{3.64'} as follows:
\begin{align}\label{3.65'}
N_1&\lesssim\int_{\Omega}|u_{x_3}|\cdot|\na\diver\varphi|\cdot|\diver\varphi_{x_3}|\,dx+\int_{\Omega}|\diver u|\diver \varphi_{x_3}^2\,dx\nonumber\\
&\lesssim\|\na u\|_{H^1}\|\na\diver\varphi\|_{H^1}^2\lesssim\de\|\na\diver\varphi\|_{H^1}^2
\end{align}
and
\begin{align}\label{3.66'}
N_2&\le\frac{2\mu+\la}{2}\|\pa_{x_3}(\tfrac{D\diver\varphi}{Dt})\|_{L^2}^2+\|\pa_{x_3}\diver\varphi\|_{L^2}^2+
C\|\mathcal{R}\|_{L^2}^2\nonumber\\
&\le\frac{2\mu+\la}{2}\|\pa_{x_3}(\tfrac{D\diver\varphi}{Dt})\|_{L^2}^2
+\|\pa_{x_3}\diver\varphi\|_{L^2}^2\nonumber\\
&\quad+C\|(u_{t},u,\na\phi)\|_{L^2}^2+C\|\pa\na (u-\frac{\varphi}{\mu})\|_{L^2}^2+C\|\na u\|_{L^6}^2\|\na^2\varphi\|_{L^3}^2\nonumber\\
&\quad+C\|\na\varphi\|_{L^\infty}^2\|\na^2 u\|_{L^2}^2+C\|R_1\|_{L^2}^2\nonumber\\
&\le\frac{2\mu+\la}{2}\|\pa_{x_3}(\tfrac{D\diver\varphi}{Dt})\|_{L^2}^2
+\|\pa_{x_3}\diver\varphi\|_{L^2}^2\nonumber\\
&\quad+C\|(u_{t},u,\na\phi)\|_{L^2}^2+C\|\pa\na(u,\varphi)\|_{L^2}^2+C\de\|(\na u,\na^2\varphi)\|_{H^1}^2.
\end{align}
Substituting \eqref{3.65'}--\eqref{3.66'} into \eqref{3.64'}, we obtain \eqref{3.56'}.

Next, applying $\pa^{k}\pa^{\iota}_{x_3}(k+\iota=1)$ to \eqref{3.63'}, multiplying the identity by $\pa^{k}\pa^{\iota+1}_{x_3}(\frac{D\diver\varphi}{Dt}+\diver\varphi),$ integrating over $\Omega$ by parts, as in the proof of \eqref{3.56'}, we can obtain \eqref{3.57'}.
\end{proof}

Then, we shall give the energy estimate about $\pa\na u$ in the half-space.
\begin{Lemma}\label{le3.7'}
Let $\Omega=\r3_{+}$. It holds that
\begin{align}\label{3.53'}
\frac{d}{dt}\|\pa(u,\na u,\diver u)\|_{L^2}^2+\|\pa u_{t}\|_{L^2}^2\lesssim\|(\na^2\phi,\na^3\varphi)\|_{L^2}^2+\de(\|\na u\|_{H^2}^2+\|\na^2\varphi\|_{H^1}^2).
\end{align}
\end{Lemma}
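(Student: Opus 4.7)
The plan is to differentiate the momentum equation $\eqref{2.8}_1$ once in a tangential direction and test it against $\pa u_t$, which is precisely the combination that simultaneously generates a time derivative of $\|\pa\na u\|_{L^2}^2+\|\pa\diver u\|_{L^2}^2$ from the viscous terms and a dissipative $\|\pa u_t\|_{L^2}^2$ on the left-hand side. The key structural reason this works only in the half-space setup of this section is that $\pa=(\pa_{x_1},\pa_{x_2})$ is tangential to $\pa\Omega=\{x_3=0\}$, so from $u|_{\pa\Omega}=0$ we inherit $\pa u|_{\pa\Omega}=\pa u_t|_{\pa\Omega}=0$; this is exactly what is needed to discard boundary contributions when integrating the Laplacian by parts.

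Concretely, applying $\pa$ to $\eqref{2.8}_1$ and integrating the identity $\pa(L_1-R_1)\cdot \pa u_t=0$ over $\Omega$, the viscous integrals give
\begin{align*}
-\mu\int_\Omega \pa\De u\cdot\pa u_t\,dx&=\tfrac{\mu}{2}\tfrac{d}{dt}\|\pa\na u\|_{L^2}^2,\\
-(\mu+\la)\int_\Omega \pa\na\diver u\cdot\pa u_t\,dx&=\tfrac{\mu+\la}{2}\tfrac{d}{dt}\|\pa\diver u\|_{L^2}^2,
\end{align*}
with no boundary terms (since $\pa u_t$ vanishes on $\pa\Omega$), while the damping term $\int_\Omega \pa u\cdot\pa u_t\,dx$ yields $\tfrac12\tfrac{d}{dt}\|\pa u\|_{L^2}^2$. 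The remaining terms to control are $\int_\Omega(\pa\De\varphi+\pa\na\diver\varphi-\pa\na\phi)\cdot\pa u_t\,dx$ and $\int_\Omega \pa R_1\cdot \pa u_t\,dx$, which I plan to handle directly by Cauchy--Schwarz and Young's inequality, absorbing a small fraction of $\|\pa u_t\|_{L^2}^2$ into the left-hand side.

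For the forcing terms, the bounds $\|\pa\na\phi\|_{L^2}\le\|\na^2\phi\|_{L^2}$ and $\|\pa\na^2\varphi\|_{L^2}\le\|\na^3\varphi\|_{L^2}$ give contributions of the form $\|(\na^2\phi,\na^3\varphi)\|_{L^2}^2$, which is the first term on the right-hand side of \eqref{3.53'}. For the nonlinear remainder $\pa R_1$, I would invoke \eqref{na-R_1-L2} to write
\begin{align*}
\int_\Omega \pa R_1\cdot \pa u_t\,dx\le \|\na R_1\|_{L^2}\|\pa u_t\|_{L^2}\lesssim \de\bigl(\|\na u\|_{H^2}+\|\na^2\varphi\|_{H^1}\bigr)\|\pa u_t\|_{L^2},
\end{align*}
which yields exactly the $\de\bigl(\|\na u\|_{H^2}^2+\|\na^2\varphi\|_{H^1}^2\bigr)$ term on the right-hand side after Young's inequality.

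I do not expect a genuine obstacle here; the proof is essentially a tangential analogue of the standard $u_t$-testing procedure. The only point that requires a bit of care is the verification that all boundary integrals generated by integration by parts are indeed zero. This rests on two facts used repeatedly: tangential integration by parts produces no boundary contribution at $\{x_3=0\}$, and $\pa u_t|_{\pa\Omega}=0$ kills the boundary terms from the Laplacian and gradient-divergence integrations. Once these are noted, combining the estimates above and choosing the Young's inequality constant small enough to absorb $\|\pa u_t\|_{L^2}^2$ produces \eqref{3.53'}.
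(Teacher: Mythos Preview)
Your proposal is correct and follows essentially the same approach as the paper: apply $\pa$ to $\eqref{2.8}_1$, test against $\pa u_t$, use the vanishing of $\pa u_t$ on $\pa\Omega$ to discard boundary terms, and estimate the forcing and nonlinear remainder via Cauchy--Schwarz, Young's inequality, and \eqref{na-R_1-L2}. The only cosmetic difference is that the paper writes the final bound with a single parameter $(\de+\va)\|\pa u_t\|_{L^2}^2$ before absorbing, whereas you describe the absorption in words; the substance is identical.
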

\begin{proof}
Applying $\pa$ to Eq. $\eqref{2.8}_{1}$, multiplying the resulting identity by $\pa u_{t}$, and then integrating it over $\Omega$ by parts, by \eqref{na-R_1-L2}, H\"{o}lder's and Cauchy's inequalities and Lemma \ref{le2.1}, we have for any $\va>0$,
\begin{align*}
\frac{1}{2}&\frac{d}{dt}\int_{\Omega}[\mu|\pa\na u|^2+(\mu+\lambda)|\pa\diver  u|^2+|\pa u|^2]\,dx+\int_{\Omega}|\pa u_{t}|^2\,dx\nonumber\\
&=-\int_{\Omega}\pa u_{t}\cdot\pa[\De\varphi+\na\diver\varphi-\na\phi]\,dx+\int_{\Omega}\pa u_{t}\cdot\pa R_1\,dx\nonumber\\
&\lesssim\|\pa u_{t}\|_{L^2}(\|\na^3\varphi\|_{L^2}+\|\na^2\phi\|_{L^2})+\|\pa u_{t}\|_{L^2}\|\na R_1\|_{L^2}\nonumber\\
&\lesssim(\de+\va)\|\pa u_{t}\|_{L^2}^2+\|(\na^2\phi,\na^3\varphi)\|_{L^2}^2+\de(\|\na u\|_{H^2}^2+\|\na^2\varphi\|_{H^1}^2),
\end{align*}
which, by letting $\va>0$ be small enough, since $\de$ is small, infers \eqref{3.53'}.
\end{proof}

Now, we can derive the desired higher-order dissipation estimates for $(u,\varphi)$ in the half-space.

\begin{Lemma}\label{le3.10'}
Let $\Omega=\r3_{+}$. It holds that
\begin{align}
&\frac{d}{dt}\|\na^2\varphi\|_{L^2}^2+\|\na^2(u,\varphi)\|_{L^2}^2\lesssim\|(u_{t},u,\na u,\na \varphi,\pa\na u,\pa\na\varphi)\|_{L^2}^2+\|\pa_{x_3}(\tfrac{D\diver\varphi}{Dt},\diver\varphi)\|_{L^2}^2;\label{3.80'}\\
&\frac{d}{dt}\|\na^3\varphi\|_{L^2}^2+\|\na^3(u,\varphi)\|_{L^2}^2\lesssim\|(u_{t},u,\na u,\na\varphi)\|_{H^1}^2+\|\pa\na^2(u,\varphi)\|_{L^2}^2+\|\pa_{x_3}\na(\tfrac{D\diver \varphi}{Dt},\diver\varphi)\|_{L^2}^2;\label{3.81''}\\
&\frac{d}{dt}\|\pa\na^2\varphi\|_{L^2}^2+\|\pa\na^2(u,\varphi)\|_{L^2}^2\lesssim\|(u_{t},\na u,\na\varphi)\|_{H^{1}}^2+\|(\na\phi,\pa^2\na u,\pa\pa_{x_3}(\tfrac{D\diver\varphi}{Dt}),\pa \na\diver \varphi)\|_{L^2}^2+\de\|\na^3(u,\varphi)\|_{L^2}^2.\label{3.81'''}
\end{align}
\end{Lemma}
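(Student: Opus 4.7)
The plan is to prove all three estimates by the same strategy: apply the appropriate spatial derivatives to the transport equation $\eqref{2.8}_2$, test against the corresponding derivative of $\varphi$, and convert the resulting cross term into genuine dissipation by substituting from the momentum equation $\eqref{2.8}_1$ and invoking elliptic regularity.

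For \eqref{3.80'}, I first apply $\na^2$ to $\eqref{2.8}_2$ and take the $L^2$ inner product with $\na^2\varphi$, giving
\[
\tfrac{1}{2}\tfrac{d}{dt}\|\na^2\varphi\|_{L^2}^2 + \int_{\Omega}\na^2 u:\na^2\varphi\,dx = \int_{\Omega}\na^2 R_2:\na^2\varphi\,dx.
\]
Two successive integrations by parts collapse the cross term to $\int_{\Omega}\De u\cdot\De\varphi\,dx$; all boundary contributions vanish identically, as the Dirichlet data $u|_{\pa\Omega}=\varphi|_{\pa\Omega}=0$ force the tangential derivatives $(\pa_{x_1}^2+\pa_{x_2}^2)\varphi$ to vanish on $\pa\Omega$. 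Substituting $\mu\De u$ from $\eqref{2.8}_1$ and integrating by parts once on $\int\na\diver\varphi\cdot\De\varphi\,dx$ produces the positive dissipation $\tfrac{1}{\mu}(\|\De\varphi\|_{L^2}^2+\|\na\diver\varphi\|_{L^2}^2)$, alongside lower-order cross terms in $u_t,u,\na\phi,R_1$ (estimated via Young's inequality and \eqref{R_1-L2}); the delicate term $\int\na\diver u\cdot\De\varphi$, whose tangential components $\pa_{x_1}\diver u,\pa_{x_2}\diver u$ lie inside $\|\pa\na u\|_{L^2}^2$ while its normal component is rewritten via \eqref{ncu} to import $\pa_{x_3}\tfrac{D\diver\varphi}{Dt}$ from the RHS; and boundary contributions dominated by $\|\pa\na\varphi\|_{L^2}^2+\|\na\varphi\|_{L^2}^2$ via the trace theorem. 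Poisson regularity (Lemma \ref{le2.4}, applied componentwise to $\varphi$) then converts the dissipation into control of $\|\na^2\varphi\|_{L^2}^2$, and the corresponding bound for $\|\na^2 u\|_{L^2}^2$ follows by viewing $\eqref{2.8}_1$ as a Poisson-type equation for $u$ whose right-hand side is now fully controlled.

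The proofs of \eqref{3.81''} and \eqref{3.81'''} follow the same blueprint with one additional derivative. For \eqref{3.81''} I apply $\na^3$ throughout and absorb the new third-order boundary terms from IBP into $\|\pa\na^2(u,\varphi)\|_{L^2}^2$ (present on the RHS) via the trace theorem. For \eqref{3.81'''} I instead apply $\pa\na^2$, whose leading tangential derivative makes the principal boundary terms vanish automatically as in Lemma \ref{le3.5'}; the RHS correspondingly accommodates $\|\pa\na\diver\varphi\|_{L^2}^2$ and $\|\pa\pa_{x_3}\tfrac{D\diver\varphi}{Dt}\|_{L^2}^2$. Nonlinear contributions from $R_1$ and $R_2$ at each order are controlled via Lemmas \ref{le2.1}--\ref{le2.2} together with the a priori smallness \eqref{equivalent'}, producing $\de$ prefactors that absorb into the left-hand side.

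The main obstacle is generating dissipation for the purely-normal second derivatives $\pa_{x_3}^2\varphi_k$ and $\pa_{x_3}^2 u_k$ ($k=1,2$), which are neither tangential (hence not controlled by Lemma \ref{le3.5'}) nor carried by the divergence information of Lemma \ref{le3.8'}. The coupling between the transport equation (which trades $u$ for $-\varphi_t$) and the momentum equation (which supplies $\De\varphi+\na\diver\varphi$) is precisely what yields this dissipation; equivalently, for each $k\in\{1,2\}$ one extracts an ODE of the schematic form $\mu\pa_t\pa_{x_3}^2\varphi_k+\pa_{x_3}^2\varphi_k=(\text{controlled})$, whose $L^2$ testing against $\pa_{x_3}^2\varphi_k$ reproduces the $\tfrac{d}{dt}+\text{dissipation}$ pattern required by \eqref{3.80'}. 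Careful bookkeeping of the boundary contributions arising from iterated integration by parts at each higher order, using the trace theorem to keep them inside the stated RHS, is the chief technical chore.
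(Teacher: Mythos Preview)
Your approach differs substantially from the paper's, and the boundary-term bookkeeping you flag as ``the chief technical chore'' is where your proposal has a genuine gap. After substituting $\mu\De u$ from $\eqref{2.8}_1$, you need the sign of $\int\na\diver\varphi\cdot\De\varphi\,dx$ to produce $\|\na\diver\varphi\|_{L^2}^2$ (otherwise this cross term, of size $\|\na^2\varphi\|_{L^2}^2$, can cancel your $\|\De\varphi\|_{L^2}^2$ dissipation). The integration by parts that extracts this sign leaves a boundary integral which, after tangential IBP on $\partial\Omega$, reduces to terms of the form $\int_{\partial\Omega}\partial_j\partial_3\varphi_3\cdot\partial_3\varphi_j\,dS$ for $j=1,2$. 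These are \emph{not} dominated by $\|\pa\na\varphi\|_{L^2(\Omega)}^2+\|\na\varphi\|_{L^2(\Omega)}^2$ via trace as you claim: the trace inequality loses half a derivative, so $\|\pa\na\varphi\|_{L^2(\partial\Omega)}$ requires control of $\|\pa\na^2\varphi\|_{L^2(\Omega)}$, a third-order quantity not present on the right-hand side of \eqref{3.80'}. You could try to rescue this by invoking the a priori bound $\|\na^3\varphi\|_{L^2}\lesssim\delta$ from \eqref{equivalent'} and interpolating, but that step is absent from your write-up and would need to be made explicit (and checked at each of the three orders).

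The paper sidesteps this entirely by a simple algebraic device: write $u=(u-\tfrac{\varphi}{\mu})+\tfrac{\varphi}{\mu}$ in $\eqref{2.8}_2$ before differentiating. Testing $\na^2\varphi_t+\na^2 u=\na^2 R_2$ against $\na^2\varphi$ then yields the clean dissipation $\tfrac{1}{\mu}\|\na^2\varphi\|_{L^2}^2$ with \emph{no} integration by parts and hence no boundary terms; the residual cross term $\int\na^2(u-\tfrac{\varphi}{\mu}):\na^2\varphi$ is bounded by Cauchy--Schwarz. The quantity $\|\na^2(u-\tfrac{\varphi}{\mu})\|_{L^2}$ is then estimated by applying Stokes regularity (Lemma~\ref{le2.3}, not Lemma~\ref{le2.4}) to the system
\[
\diver(u-\tfrac{\varphi}{\mu})=-\tfrac{D\diver\varphi}{Dt}-\tfrac{1}{\mu}\diver\varphi-(\na u)^{T}{:}\na\varphi,\quad
-\mu\De(u-\tfrac{\varphi}{\mu})-\na\phi=-u_t-u+(\mu+\lambda)\na\diver u-\na\diver\varphi+R_1,
\]
with zero Dirichlet data. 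The normal-derivative information $\pa_{x_3}\tfrac{D\diver\varphi}{Dt}$ and $\pa_{x_3}\diver\varphi$ enters here through the $H^1$ norm of the divergence datum, after splitting $\na(\tfrac{D\diver\varphi}{Dt})$ into its tangential part (controlled by $\|\pa\na u\|_{L^2}$ via \eqref{ncu}) and its normal part. This packaging also delivers $\|\na^2 u\|_{L^2}^2\lesssim\|\na^2(u-\tfrac{\varphi}{\mu})\|_{L^2}^2+\|\na^2\varphi\|_{L^2}^2$ for free, whereas your route to $\|\na^2 u\|_{L^2}^2$ via Poisson regularity on $\eqref{2.8}_1$ would again require replacing $\na\diver u$ through \eqref{ncu} and tracking the same tangential/normal split.
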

\begin{proof}
Firstly, we can construct the following high-order dissipation estimates of $\varphi$:
\begin{align}
&\frac{d}{dt}\|\na^2\varphi\|_{L^2}^2+\|\na^2\varphi\|_{L^2}^2\lesssim\|\na^2(u-\frac{\varphi}{\mu})\|_{L^2}^2+\de\|\na u\|_{H^1}^2;\label{3.18'}\\
&\frac{d}{dt}\|\na^3\varphi\|_{L^2}^2+\|\na^3\varphi\|_{L^2}^2\lesssim\|\na^3(u-\frac{\varphi}{\mu})\|_{L^2}^2+\de\|\na u\|_{H^2}^2;\label{3.19'''}\\
&\frac{d}{dt}\|\pa\na^2\varphi\|_{L^2}^2+\|\pa\na^2\varphi\|_{L^2}^2\lesssim\|\pa\na^2(u-\frac{\varphi}{\mu})\|_{L^2}^2+\de\|\na u\|_{H^2}^2.\label{3.19''}
\end{align}
In fact, applying $\na^2$ to Eq. $\eqref{2.8}_{2}$, multiplying the resulting identity by $\na^2\varphi$ and integrating it over $\Omega$, by H\"{o}lder's and Cauchy's inequalities and Lemma \ref{le2.2}, we obtain for any $\va>0$,
\begin{align*}
\frac{1}{2}&\frac{d}{dt}\int_{\Omega}|\na^2\varphi|^2 \,dx+\frac{1}{\mu}\int_{\Omega}|\na^2\varphi|^2 \,dx\\
&=-\int_{\Omega}\na^2(u-\frac{\varphi}{\mu})\cdot\na^2\varphi \,dx-\int_{\Omega}\na^2(u\cdot\na\varphi)\cdot\na^2\varphi \,dx\\
&\lesssim\|\na^2(u-\frac{\varphi}{\mu})\|_{L^2}\|\na^2\varphi\|_{L^2}+\|\na^2(u\cdot\na\varphi)\|_{L^2} \|\na^2\varphi\|_{L^2}\nonumber\\
&\lesssim\|\na^2(u-\frac{\varphi}{\mu})\|_{L^2}\|\na^2\varphi\|_{L^2}+(\|\na^2u\|_{L^2} \|\na\varphi\|_{L^\infty}+\|u\|_{L^\infty}\|\na^3\varphi\|_{L^2})\|\na^2\varphi\|_{L^2}\nonumber\\
&\lesssim(\de+\va)\|\na^2\varphi\|_{L^2}^2+\|\na^2(u-\frac{\varphi}{\mu})\|_{L^2}^2+\de\|\na u\|_{H^1}^2,
\end{align*}
which, by letting $\va>0$ be small enough, infers \eqref{3.18'}. Similarly, we can prove \eqref{3.19'''}--\eqref{3.19''}.

Secondly, we can construct the following high-order dissipation estimates of $(u-\frac{\varphi}{\mu})$:
\begin{align}
&\|\na^{2}(u-\frac{\varphi}{\mu})\|_{L^2}^2\lesssim \|(u_{t},u,\na u,\na \varphi,\pa\na u,\pa_{x_3}(\tfrac{D\diver \varphi}{Dt}))\|_{L^2}^2+\|\diver \varphi\|_{H^{1}}^2+\de\|\na^2(u,\varphi)\|_{L^2}^2;\label{3.72'}\\
&\|\na^{3}(u-\frac{\varphi}{\mu})\|_{L^2}^2
\lesssim \|(u_{t},u,\na u)\|_{H^1}^2+\|(\na\varphi,\pa\na^2 u,\pa_{x_3}\na(\tfrac{D\diver \varphi}{Dt}))\|_{L^2}^2+\|\diver\varphi\|_{H^{2}}^2+\de(\|\na^3 u\|_{L^2}^2+\|\na^2\varphi\|_{H^{1}}^2);\label{3.73'}\\
&\|\pa\na^{2}(u-\frac{\varphi}{\mu})\|_{L^2}^2\lesssim \|(u_{t},\na u,\na\varphi)\|_{H^{1}}^2+\|(\na\phi,\pa^2\na u,\pa\pa_{x_3}(\tfrac{D\diver\varphi}{Dt}),\pa\na\diver\varphi)\|_{L^2}^2+\de\|\na^3(u,\varphi)\|_{L^2}^2.\label{3.74''}
\end{align}
In fact, by collecting $\eqref{ncu}$ and Eq. $\eqref{2.8}_{1}$, we get
\begin{align}\label{3.75'}
\begin{cases}
\diver (u-\frac{\varphi}{\mu})=-\frac{D\diver\varphi}{Dt}-\frac{1}{\mu}\diver\varphi-(\na u)^{T}:\na\varphi,\\
-\mu\De(u-\frac{\varphi}{\mu})-\na\phi=-u_{t}-u+(\mu+\lambda)\na\diver u-\na \diver\varphi+R_1,\\
(u-\frac{\varphi}{\mu})\mid_{\pa\Omega}=0.
\end{cases}
\end{align}
Applying Lemma \ref{le2.3} to the boundary-value problem \eqref{3.75'}, we obtain
\begin{align}\label{1-30-1}
\|\na^{2}(u-\tfrac{\varphi}{\mu})\|_{L^2}\lesssim \|(u_{t},u,R_1)\|_{L^2}+\|\tfrac{D\diver \varphi}{Dt}\|_{H^{1}}+\|\diver\varphi\|_{H^{1}}+\|(\na u)^{T}:\na\varphi\|_{H^{1}}+\|(\na u,\na\varphi)\|_{L^2},
\end{align}
where we have used
\begin{align*}
\|\na\diver u\|_{L^2}\le\|\na(\tfrac{D\diver\varphi}{Dt})\|_{L^2}+\|\na[(\na u)^{T}:\na\varphi]\|_{L^2}.
\end{align*}
By \eqref{ncu} and H\"{o}lder's inequality, we have
\begin{align}
&\|\tfrac{D\diver\varphi}{Dt}\|_{L^2}\le\|\diver u\|_{L^2}+\|(\na u)^{T}:\na\varphi\|_{L^2}\lesssim\|\na u\|_{L^2};\label{1-30-2}\\
&\|\na(\tfrac{D\diver\varphi}{Dt})\|_{L^2}\lesssim \|\pa(\tfrac{D\diver\varphi}{Dt})\|_{L^2}+\|\pa_{x_3}(\tfrac{D\diver \varphi}{Dt})\|_{L^2}\nonumber\\
&\qquad\qquad\quad\ \lesssim\|\pa\na u\|_{L^2}+\|\pa[(\na u)^{T}:\na\varphi]\|_{L^2}+\|\pa_{x_3}(\tfrac{D\diver \varphi}{Dt})\|_{L^2}\nonumber\\
&\qquad\qquad\quad\ \lesssim\|\pa\na u\|_{L^2}+\|\na^2u\|_{L^2}\|\na\varphi\|_{L^\infty}+\|\na u\|_{L^3}\|\na^2\varphi\|_{L^6}+\|\pa_{x_3}(\tfrac{D\diver\varphi}{Dt})\|_{L^2}\nonumber\\
&\qquad\qquad\quad\ \lesssim\|\pa\na u\|_{L^2}+\|\pa_{x_3}(\tfrac{D\diver\varphi}{Dt})\|_{L^2}+\de\|\na u\|_{H^1};\label{1-30-3}\\
&\|(\na u)^{T}:\na\varphi\|_{H^{1}}\lesssim\|\na u\|_{L^6}\|\na\varphi\|_{L^3}+\|\na^2u\|_{L^2} \|\na\varphi\|_{L^\infty}+\|\na u\|_{L^3}\|\na^2\varphi\|_{L^6}\lesssim\de\|\na u\|_{H^1}.\label{1-30-4}
\end{align}
Plugging the estimates \eqref{1-30-2}--\eqref{1-30-4} and \eqref{R_1-L2} into \eqref{1-30-1}, we obtain
\begin{align*}
\|\na^{2}(u-\tfrac{\varphi}{\mu})\|_{L^2}\lesssim \|(u_{t},u,\na u,\na\varphi,\pa\na u,\pa_{x_3}(\tfrac{D\diver \varphi}{Dt}))\|_{L^2}+\|\diver\varphi\|_{H^{1}}+\de\|\na^2(u,\varphi)\|_{L^2},
\end{align*}
which infers \eqref{3.72'}.

Similarly, we have
\begin{align}\label{3.77'}
\|\na^{3}(u-\tfrac{\varphi}{\mu})\|_{L^2}\lesssim\|(u_{t},u,R_1)\|_{H^1}+\|\tfrac{D\diver\varphi}{Dt}\|_{H^2}+\|\diver  \varphi\|_{H^2}+\|(\na u)^{T}:\na\varphi\|_{H^2}+\|(\na u,\na\varphi)\|_{L^2},
\end{align}
where we have used
\begin{align*}
\|\na\diver u\|_{H^1}\le\|\na(\tfrac{D\diver\varphi}{Dt})\|_{H^1}+\|\na[(\na u)^{T}:\na\varphi]\|_{H^1}.
\end{align*}
Similar to \eqref{1-30-2}--\eqref{1-30-4}, by Lemma \ref{le2.2}, we have
\begin{align}
&\|\tfrac{D\diver\varphi}{Dt}\|_{H^1}\lesssim \|\diver u\|_{H^1}+\|(\na u)^{T}:\na\varphi\|_{H^1}\lesssim\|\diver  u\|_{H^1}+\de\|\na u\|_{H^1}\lesssim\|\na u\|_{H^1};\label{1-30-5}\\
&\|\na^2[(\na u)^{T}:\na\varphi]\|_{L^2}\lesssim\|\na^3 u\|_{L^2}\|\na\varphi\|_{L^\infty}+\|\na u\|_{L^\infty}\|\na^3 \varphi\|_{L^2}\lesssim\de\|\na u\|_{H^2};\label{1-30-6}\\
&\|\na^2(\tfrac{D\diver\varphi}{Dt})\|_{L^2}\lesssim \|\pa\na(\tfrac{D\diver \varphi}{Dt})\|_{L^2}+\|\pa_{x_3}\na(\tfrac{D\diver\varphi}{Dt})\|_{L^2}\nonumber\\
&\qquad\qquad\quad\ \ \lesssim\|\pa\na\diver u\|_{L^2}+\|\pa\na[(\na u)^{T}:\na\varphi]\|_{L^2}+\|\pa_{x_3}\na(\tfrac{D\diver \varphi}{Dt})\|_{L^2}\nonumber\\
&\qquad\qquad\quad\ \ \lesssim\|\pa\na^2u\|_{L^2}+\|\na^2[(\na u)^{T}:\na\varphi]\|_{L^2}+\|\pa_{x_3}\na(\tfrac{D\diver \varphi}{Dt})\|_{L^2}\nonumber\\
&\qquad\qquad\quad\ \ \lesssim\|\pa\na^2u\|_{L^2}+\|\pa_{x_3}\na(\tfrac{D\diver\varphi}{Dt})\|_{L^2}+\de\|\na u\|_{H^2}.\label{1-30-7}
\end{align}
Plugging the estimates \eqref{R_1-L2}, \eqref{na-R_1-L2}, \eqref{1-30-4} and \eqref{1-30-5}--\eqref{1-30-7} into \eqref{3.77'}, we obtain
\begin{align*}
\|\na^{3}(u-\tfrac{\varphi}{\mu})\|_{L^2}\lesssim\|(u_{t},u,\na u)\|_{H^1}+\|(\na\varphi,\pa\na^2u,\pa_{x_3}\na(\tfrac{D\diver\varphi}{Dt}))\|_{L^2}+\|\diver  \varphi\|_{H^2}+\de(\|\na^3u\|_{L^2}+\|\na^2\varphi\|_{H^1}),
\end{align*}
which infers \eqref{3.73'}.

To estimate the term $\pa\na^2 u$ on the right-hand side of \eqref{3.73'}, by applying $\pa$ to Eq. $\eqref{3.75'}_2$, we study
\begin{align*}
\begin{cases}
-\mu\De[\pa(u-\frac{\varphi}{\mu})]=\pa[-u_{t}-u+(\mu+\lambda)\na\diver u-\na\diver \varphi+\na\phi+R_1],\\
\pa(u-\frac{\varphi}{\mu})\mid_{\pa\Omega}=0.
\end{cases}
\end{align*}
By Lemma \ref{le2.4}, we have
\begin{align}\label{1-31-1}
\|\pa\na^2(u-\frac{\varphi}{\mu})\|_{L^2}
&=\|\na^2\pa(u-\frac{\varphi}{\mu})\|_{L^2}\nonumber\\
&\lesssim \|(\pa u_{t},\pa u,\pa\na \phi,\pa R_1)\|_{L^2}+\|\pa\na\diver  u\|_{L^2}+\|\pa\na\diver \varphi\|_{L^2}+\|\na\pa(u-\frac{\varphi}{\mu})\|_{L^2}\nonumber\\
&\lesssim \|(\na u_{t},\na u,\na^2\phi,\na R_1)\|_{L^2}+\|\pa\na(\tfrac{D\diver\varphi}{Dt})\|_{L^2}+\|\na^2[(\na u)^{T}:\na\varphi]\|_{L^2}\nonumber\\
&\quad+\|\pa\na\diver\varphi\|_{L^2}+\|\na^2(u,\varphi)\|_{L^2}.
\end{align}
By $\eqref{2.8}_3$, \eqref{ncu} and \eqref{1-30-6}, we estimate
\begin{align}\label{1-31-2}
\|\pa\na(\tfrac{D\diver\varphi}{Dt})\|_{L^2}&\lesssim\|\pa^2(\tfrac{D\diver \varphi}{Dt})\|_{L^2}+\|\pa\pa_{x_3}(\tfrac{D\diver\varphi}{Dt})\|_{L^2}\nonumber\\
&\lesssim\|\pa^2\diver u\|_{L^2}+\|\pa^2[(\na u)^{T}:\na\varphi]\|_{L^2}+\|\pa\pa_{x_3}(\tfrac{D\diver \varphi}{Dt})\|_{L^2}\nonumber\\
&\lesssim\|\pa^2\na u\|_{L^2}+\|\pa\pa_{x_3}(\tfrac{D\diver\varphi}{Dt})\|_{L^2}+\de\|\na u\|_{H^2}
\end{align}
and
\begin{align}\label{1-31-3}
\|\na^2\phi\|_{L^2}\lesssim\|\De\phi\|_{L^2}+\|\na\phi\|_{L^2}\lesssim\|\na\varphi\|_{L^2}
+\|\na\varphi\|_{L^\infty}\|\na\varphi\|_{L^2}
+\|\na\phi\|_{L^2}\lesssim\|\na\varphi\|_{L^2}+\|\na\phi\|_{L^2}.
\end{align}
Plugging \eqref{na-R_1-L2}, \eqref{1-30-6} and \eqref{1-31-2}--\eqref{1-31-3} into \eqref{1-31-1}, we have
\begin{align*}
\|\pa\na^2(u-\frac{\varphi}{\mu})\|_{L^2}\lesssim \|(u_{t},\na u,\na\varphi)\|_{H^{1}}+\|(\na\phi,\pa^2\na u,\pa\pa_{x_3}(\tfrac{D\diver\varphi}{Dt}),\pa \na\diver\varphi)\|_{L^2}+\de\|\na^3(u,\varphi)\|_{L^2},
\end{align*}
which implies \eqref{3.74''}.

Finally, combining \eqref{3.18'}--\eqref{3.19''} with \eqref{3.72'}--\eqref{3.74''}, we can deduce the estimates \eqref{3.80'}--\eqref{3.81'''}.
\end{proof}

\section{Higher-order Energy Estimates in Exterior Domains}\label{se6}

\

In this section, we shall derive the higher-order interior and boundary estimates in an exterior domain with a compact boundary, cf. \cite{Matsumura-Nishida1983}. First, we establish the higher-order interior estimates for $(u,\varphi,\phi)$.
\begin{Lemma}\label{le3.5}
Let $\Omega\subset\r3$ be an exterior domain with a compact boundary and $\chi_{\sss 0}\in C_{0}^{\infty}(\Omega)$ be any fixed function. It holds that
\begin{align}\label{3.26}
\frac{d}{dt}&\|\chi_{\sss 0}(\na u,\na\diver \varphi,\De\varphi,\De\phi)\|_{L^2}^2
+\|\chi_{\sss 0}\na u\|_{L^2}^2+\|\chi_{\sss 0}\na^2(u,\varphi,\phi)\|_{L^2}^2\nonumber\\
&\lesssim \|\na(u,\varphi,\phi,u_t)\|_{L^2}^2+\de\|\na(u,\varphi)\|_{H^1}^2
\end{align}
and
\begin{align}\label{3.27}
\frac{d}{dt}&\|\chi_{\sss 0}\na(\na u,\na\diver\varphi,\De\varphi,\De\phi)\|_{L^2}^2+\|\chi_{\sss 0}\na^2u\|_{L^2}^2+\|\chi_{\sss 0}\na^3(u,\varphi,\phi)\|_{L^2}^2
\nonumber\\
&\lesssim \|\na(u,u_t)\|_{L^2}^2+\|\na^2(u,\varphi,\phi)\|_{L^2}^2
+\de(\|\na\varphi\|_{H^2}^2+\|\na^3u\|_{L^2}^2).
\end{align}
\end{Lemma}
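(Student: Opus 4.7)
The proof mirrors the three-step higher-order energy argument in the half-space (Lemma \ref{le3.5'}), with the tangential derivative $\pa$ replaced by the full gradient $\na$ and every integration weighted by the interior cutoff $\chi_{\sss 0}^2$. Because $\chi_{\sss 0}\in C_0^\infty(\Omega)$ vanishes in a neighborhood of $\pa\Omega$, all boundary traces on $\pa\Omega$ produced by integration by parts drop out for free; in return we incur commutator terms of the schematic form $\na\chi_{\sss 0}\cdot(\text{lower-order derivative})$, which are the only genuinely new contributions compared to the half-space situation.

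For \eqref{3.26}, the plan is to execute a three-step scheme. In Step 1, differentiate the system \eqref{2.8} once and evaluate
\[
\int_\Omega \chi_{\sss 0}^2\,\na(L_1-R_1)\cdot\na u\,dx + \int_\Omega \chi_{\sss 0}^2\,\De(L_2-R_2)\cdot\De\varphi\,dx = 0.
\]
Integration by parts produces the time derivative $\tfrac{1}{2}\tfrac{d}{dt}\|\chi_{\sss 0}(\na u,\na\diver\varphi,\De\varphi,\De\phi)\|_{L^2}^2$, the viscous dissipation $\|\chi_{\sss 0}(\na u,\na^2 u)\|_{L^2}^2$, cancellations between the $\De\varphi$ and $\na u$ cross terms, and nonlinear remainders controlled via \eqref{R_1-L2}, \eqref{na-R_1-L2} and Lemma \ref{le2.1}. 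The identities \eqref{1-20-1}--\eqref{1-20-3} together with the Poisson equation $\eqref{2.8}_3$ convert the $\na\diver\varphi_t$ and $\na\phi$-contributions into $\pa_t\|\chi_{\sss 0}\na\diver\varphi\|_{L^2}^2$ and $\pa_t\|\chi_{\sss 0}\De\phi\|_{L^2}^2$, exactly as in the half-space case. In Step 2, test $\na(L_1-R_1)=0$ against $\chi_{\sss 0}^2\na\varphi$ and exploit the elliptic structure (using $\De\phi=\diver\varphi+O(|\na\varphi|^2)$ to rewrite the $-\na\phi$ contribution, as in Step 2 of Lemma \ref{le3.5'}) to obtain
\[
\|\chi_{\sss 0}\na^2(\varphi,\phi)\|_{L^2}^2 \lesssim \|\na(u,u_t,\varphi,\phi)\|_{L^2}^2 + C_\va\|\chi_{\sss 0}\na^2 u\|_{L^2}^2 + \va\|\chi_{\sss 0}\na^2\varphi\|_{L^2}^2 + \de\,(\text{higher-order}).
\]
In Step 3, combine the outputs of Steps 1 and 2 with a small weight $\eta>0$ and use the a priori smallness \eqref{equivalent'} to close \eqref{3.26}.

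The estimate \eqref{3.27} is obtained by the same procedure after applying one additional spatial derivative throughout; the bookkeeping is mechanical but structurally identical. The main technical obstacle is the careful handling of the commutator terms $[\chi_{\sss 0}^2,\na^k]$ for $k=1,2$. These generate factors of the form $\na\chi_{\sss 0}\cdot\na^{j}(u,\varphi,\phi)$ without the outer weight $\chi_{\sss 0}$, of order $j$ strictly less than what appears in the weighted dissipation. By H\"{o}lder's inequality, Lemma \ref{le2.1}, and the a priori smallness \eqref{equivalent'}, each such commutator is absorbed either into $\de$ times a higher-order dissipation (kept on the right-hand side with a small constant) or into the Section \ref{se3} lower-order norms $\|\na(u,\varphi,\phi,u_t)\|_{L^2}^2$ and $\|\na^2(u,\varphi,\phi)\|_{L^2}^2$ already displayed in \eqref{3.26}--\eqref{3.27}. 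This is precisely why the right-hand sides of the stated estimates contain exactly the lower-order norms one expects, and why the compact support of $\chi_{\sss 0}$ is essential to avoid any spurious boundary contribution from $\pa\Omega$.
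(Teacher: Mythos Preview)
Your proposal is correct and follows the same three-step scheme as the paper's proof. The only notable difference is cosmetic: in Step~1 you pair $\De(L_2-R_2)$ with $\chi_{\sss 0}^2\De\varphi$, whereas the paper pairs $\na(L_2-R_2)$ with $-\chi_{\sss 0}^2\na\De\varphi$; after one integration by parts these agree up to a commutator term of the type $\na\chi_{\sss 0}\cdot\na u\cdot\De\varphi$, so the resulting energy identity and the collection of error terms $E_1$--$E_{10}$ are the same.
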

\begin{proof}
{\it Step 1.} For the first two equations in \eqref{2.8}, we integrate the identity $\na(L_1-R_1):\na u\chi_{\sss 0}^2+\na(L_2-R_2):(-\na\Delta \varphi\chi_{\sss 0}^2)=0$ over $\Omega$ by parts to obtain
\begin{align}\label{3.30}
\frac{1}{2}&\frac{d}{dt}\|\chi_{\sss 0}(\na u,\na\diver\varphi,\De\varphi,\De\phi)\|_{L^2}^2+\|\chi_{\sss 0}\nabla u\|_{L^2}^2+\mu\|\chi_{\sss 0}\De u\|_{L^2}^2+(\mu+\lambda)\|\chi_{\sss 0}\na\diver u\|_{L^2}^2\nonumber\\
&=-\mu\int_{\Omega}2\chi_{\sss 0}\De u\cdot\na u\cdot\na\chi_{\sss 0}\,dx
-(\mu+\lambda)\int_{\Omega}2\chi_{\sss 0}
\left[\na\diver u\cdot\na u\cdot\na\chi_{\sss 0}-\cur u\cdot(\na\diver u\times\na\chi_{\sss 0})\right]\,dx\nonumber\\
&\quad-\int_{\Omega}\na\diver\varphi\cdot\na\diver (u\cdot\na\varphi)\chi_{\sss 0}^2\,dx
+\int_{\Omega}2\chi_{\sss 0}\left[\na\diver\varphi\cdot\na u\cdot\na\chi_{\sss 0}
-\cur u\cdot (\na\diver\varphi\times\na\chi_{\sss 0}) \right]\,dx\nonumber\\
&\quad-\int_{\Omega}2\chi_{\sss 0}\left[\na\phi\cdot\na u\cdot\na\chi_{\sss 0}-\cur u\cdot (\na\phi\times\na\chi_{\sss 0})\right] \,dx-\int_{\Omega}2\chi_{\sss 0}\De \phi_{t}\na \phi\cdot\na\chi_{\sss 0}\,dx\nonumber\\
&\quad-\int_{\Omega}\chi_{\sss 0}^2\na[O(|\na\varphi|^2)_{t}-\diver (u\cdot\na\varphi)]\cdot \na\phi \,dx-\int_{\Omega}2\chi_{\sss 0}\na \varphi_{t}\cdot\De \varphi\cdot\na\chi_{\sss 0}\,dx\nonumber\\
&\quad+\int_{\Omega}\chi_{\sss 0}^2\na R_1:\na u\,dx-\int_{\Omega}\chi_{\sss 0}^2\na R_2:\na\Delta\varphi\,dx\nonumber\\
&:=\sum_{i=1}^{10}E_{i}.
\end{align}
Now we estimate the terms $E_1$--$E_{10}$. By \eqref{R_1-L2}, \eqref{e1}, H\"{o}lder's and Cauchy's inequalities and Lemma \ref{le2.1}, we have
\begin{align*}
&E_{1}
=-\mu\int_{\Omega}2\chi_{\sss 0}\De u\cdot\na u\cdot\na\chi_{\sss 0}\,dx\lesssim\|\na u\|_{L^2}\|\chi_{\sss 0}\De u\|_{L^2};
\\
&E_{2}=-(\mu+\lambda)\int_{\Omega}2\chi_{\sss 0}\left[\na\diver u\cdot\na u\cdot\na\chi_{\sss 0}-\cur u\cdot(\na\diver u\times\na\chi_{\sss 0})\right]\,dx\lesssim\|\na u\|_{L^2}\|\chi_{\sss 0}\na\diver u\|_{L^2};
\\
&E_{3}
=-\int_{\Omega}\na\diver\varphi\cdot\na\diver(u\cdot\na\varphi)\chi_{\sss 0}^2\,dx=-\int_{\Omega}\na\diver\varphi \cdot\na[(\na u)^T:\na\varphi+u\cdot\na\diver\varphi]\chi_{\sss 0}^2\,dx\nonumber\\
&\quad=-\int_{\Omega}\na\diver\varphi \cdot\na[(\na u)^T:\na\varphi]\chi_{\sss 0}^2\,dx-\int_{\Omega}\na\diver\varphi \cdot\na u\cdot\na\diver\varphi\chi_{\sss 0}^2\,dx\nonumber\\
&\qquad-\frac12\int_{\Omega}\chi_{\sss 0}^2u\cdot\na|\na\diver\varphi|^2\,dx\nonumber\\
&\quad=-\int_{\Omega}\na\diver\varphi \cdot\na[(\na u)^T:\na\varphi]\chi_{\sss 0}^2\,dx-\int_{\Omega}\na\diver\varphi \cdot\na u\cdot\na\diver\varphi\chi_{\sss 0}^2\,dx\nonumber\\
&\qquad+\frac12\int_{\Omega}\diver (\chi_{\sss 0}^2u)|\na\diver\varphi|^2\,dx\nonumber\\
&\quad\lesssim\|\na\diver\varphi\|_{L^2}\|\na[(\na u)^T:\na\varphi]\|_{L^2}+\|\na\diver\varphi\|_{L^2}\|\na u\|_{L^6}\|\na\diver\varphi\|_{L^3}\nonumber\\
&\qquad+\|u\|_{L^\infty}\|\na\diver\varphi\|_{L^2}^2+\|\diver  u\|_{L^3}\|\na\diver\varphi\|_{L^6}\|\na\diver\varphi\|_{L^2}\nonumber\\
&\quad\lesssim\de(\|\na^2\varphi\|_{L^2}^2+\|\na u\|_{H^1}^2);
\\
&E_{4}
=\int_{\Omega}2\chi_{\sss 0}\left[\na\diver\varphi\cdot\na u\cdot\na\chi_{\sss 0}
-\cur u\cdot (\na\diver\varphi\times\na\chi_{\sss 0}) \right]\,dx\lesssim\|\na u\|_{L^2}\|\na^2\varphi\|_{L^2};
\\
&E_{5}
=-\int_{\Omega}2\chi_{\sss 0}\left[\na\phi\cdot\na u\cdot\na\chi_{\sss 0}-\cur u\cdot (\na\phi\times\na\chi_{\sss 0})\right] \,dx\lesssim\|\na u\|_{L^2}\|\na\phi\|_{L^2};
\\
&E_{6}
=-\int_{\Omega}2\chi_{\sss 0}\De \phi_{t}\na \phi\cdot\na\chi_{\sss 0} \,dx
\lesssim\|\De \phi_{t}\|_{L^2}\|\na\phi\|_{L^2}\lesssim\|\na u\|_{L^2}\|\na\phi\|_{L^2};
\\
&E_{7}
=-\int_{\Omega}\chi_{\sss 0}^2\na[O(|\na\varphi|^2)_{t}-\diver (u\cdot\na\varphi)]\cdot \na\phi \,dx\lesssim\de(\|\na u\|_{H^1}^2+\|\na\phi\|_{L^2}^2);
\\
&E_{8}
=-\int_{\Omega}2\chi_{\sss 0}\na \varphi_{t}\cdot\De\varphi\cdot\na\chi_{\sss 0}\,dx\lesssim\|\na \varphi_{t}\|_{L^2} \|\na^2\varphi\|_{L^2}\lesssim\|\na u\|_{L^2}\|\na^2\varphi\|_{L^2};
\\
&E_{9}
=\int_{\Omega}\chi_{\sss 0}^2\na R_1:\na u\,dx=-\int_{\Omega}R_1\cdot\diver (\chi_{\sss 0}^2\na u)\,dx\lesssim \|R_1\|_{L^2} \|\diver (\chi_{\sss 0}^2\na u)\|_{L^2}\nonumber\\
&\quad\lesssim \de(\|\na u\|_{H^1}+\|\na^2\varphi\|_{L^2})\|\na u\|_{H^1}\lesssim\de(\|\na u\|_{H^1}^2+\|\na^2\varphi\|_{L^2}^2);\\
&E_{10}=-\int_{\Omega}\chi_{\sss 0}^2\na R_2:\na\De\varphi\,dx=\int_{\Omega}\chi_{\sss 0}^2\na (u\cdot\na\varphi):\na\Delta\varphi\,dx\lesssim\|\na(u\cdot\na\varphi)\|_{L^2}\|\na^3\varphi\|_{L^2}\nonumber\\
&\quad\ \lesssim\de(\|\na u\|_{L^3}\|\na\varphi\|_{L^6}+\|u\|_{L^\infty}\|\na^2\varphi\|_{L^2})\lesssim\de\|\na(u,\varphi)\|_{H^1}^2.
\end{align*}
From the above estimates for $E_{1}$--$E_{10}$, we obtain
\begin{align}\label{5-23-1}
\sum_{i=1}^{10}E_{i}\lesssim\de(\|\na(u,\varphi)\|_{H^1}^2+\|\na\phi\|_{L^2}^2)+\|\na u\|_{L^2}\|(\na\phi,\na^2 \varphi,\chi_{\sss 0}\De u,\chi_{\sss 0}\na\diver u)\|_{L^2}.
\end{align}
Note that
\begin{align*}
\|\chi_{\sss 0}\na^2u\|_{L^2}^2&=\int_{\Omega}\na^2u:\na^2u\chi_{\sss 0}^2\,dx=-\int_{\Omega}\na u:\diver (\na^2u\chi_{\sss 0}^2)\,dx\\
&=-\int_{\Omega}\na u:\na\De u\chi_{\sss 0}^2\,dx-\int_{\Omega}2\chi_{\sss 0}\na u:\na^2u\cdot\na\chi_{\sss 0}\,dx\\
&=\|\chi_{\sss 0}\De u\|^2+\int_{\Omega}2\chi_{\sss 0}\na u\cdot\De u\cdot\na\chi_{\sss 0}\,dx-\int_{\Omega}2\chi_{\sss 0}\na u:\na^2u\cdot\na\chi_{\sss 0}\,dx\\
&\lesssim\|\chi_{\sss 0}\De u\|_{L^2}^2+\|\chi_{\sss 0}\De u\|_{L^2} \|\na u\|_{L^2}+\|\chi_{\sss 0}\na^2u\|_{L^2} \|\na u\|_{L^2},
\end{align*}
which, by Cauchy's inequality, infers
\begin{align}\label{5-23-2}
\|\chi_{\sss 0}\na^2u\|_{L^2}^2-\|\na u\|_{L^2}^2\lesssim\|\chi_{\sss 0}\De u\|_{L^2}^2.
\end{align}
Plugging \eqref{5-23-1}--\eqref{5-23-2} into \eqref{3.30}, by Cauchy's inequality, we deduce for any $\epsilon>0$,
\begin{align}\label{5-23-3}
\frac{d}{dt}&\|\chi_{\sss 0}(\na u,\na\diver \varphi,\De\varphi,\De\phi)\|_{L^2}^2
+\|\chi_{\sss 0}(\na u,\na^2u)\|_{L^2}^2\nonumber\\
&\lesssim \|\na u\|_{L^2}^2+\de(\|\na(u,\varphi)\|_{H^1}^2+\|\na\phi\|_{L^2}^2)+\epsilon\|(\na\phi,\na^2\varphi)\|_{L^2}^2.
\end{align}

Integrating the identity $\na^2(L_1-R_1):\na^2 u\chi_{\sss 0}^2+\na^2(L_2-R_2):(-\na^2\Delta \varphi\chi_{\sss 0}^2)=0$ over $\Omega$, by the similar arguments as above, we obtain
\begin{align}\label{6-02-3}
\frac{d}{dt}&\|\chi_{\sss 0}\na(\na u,\na\diver\varphi,\De\varphi,\De\phi)\|_{L^2}^2+\|\chi_{\sss 0}(\na^2u,\na^3u)\|_{L^2}^2\nonumber\\
&\lesssim \|\na u\|_{H^1}^2+\de(\|\na\varphi\|_{H^2}^2+\|\na^3u\|_{L^2}^2+\|\na^2\phi\|_{L^2}^2)
+\epsilon\|(\na^2\phi,\na^3\varphi)\|_{L^2}^2.
\end{align}

{\it Step 2.}
Applying $\na$ to $\eqref{2.8}_1$, multiplying the identity by $\na\varphi\chi_{\sss 0}^2$ and integrating it over $\Omega$ by parts, through H\"{o}lder's and Cauchy's inequalities and Lemma \ref{le2.1}, we obtain for any $\epsilon>0$,
\begin{align}\label{3.44}
&\|\chi_{\sss 0}(\De\varphi,\na\diver \varphi,\De\phi)\|_{L^2}^2\nonumber\\
&\quad=-\int_{\Omega}2\chi_{\sss 0}\De\varphi\cdot\na\varphi\cdot\na\chi_{\sss 0}\,dx-\int_{\Omega}2\chi_{\sss 0}\na\diver\varphi\cdot(\na\varphi\cdot\na\chi_{\sss 0}-\cur\varphi\times\na\chi_{\sss 0})\,dx\nonumber\\
&\qquad+\int_{\Omega}\De\phi O(|\na\varphi|^2)\chi_{\sss 0}^2\,dx+\int_{\Omega}2\chi_{\sss 0}\left[\na\phi\cdot\na\varphi\cdot\na\chi_{\sss 0}-\diver\varphi\na\phi\cdot\na \chi_{\sss 0}-\na\phi\cdot(\cur\varphi\times\na\chi_{\sss 0})\right] \,dx\nonumber\\
&\qquad+\int_{\Omega}\chi_{\sss 0}^2(\na u_{t}-\na u):\na\varphi\,dx+\int_{\Omega}\chi_{\sss 0}[\mu \Delta u+(\mu+\lambda) \na\diver u+R_1]\cdot(\chi_{\sss 0}\De\varphi+2\na\varphi\cdot\na\chi_{\sss 0})\,dx\nonumber\\
&\quad\lesssim\|\na\varphi\|_{L^2}\|\chi_{\sss 0}\De\varphi\|_{L^2}+\|\na\varphi\|_{L^2}\|\chi_{\sss 0}\na\diver \varphi\|_{L^2}
+\|\chi_{\sss 0}\De\phi\|_{L^2}\|\na\varphi\|_{L^2}\|\na\varphi\|_{L^\infty}
+\|(\na\phi,\na u_{t},\na u)\|_{L^2}\|\na\varphi\|_{L^2}\nonumber\\
&\qquad+(\|\chi_{\sss 0}\De u\|_{L^2}+\|\chi_{\sss 0}\na\diver u\|_{L^2}+\|\chi_{\sss 0}R_1\|_{L^2})(\|\chi_{\sss 0}\De\varphi\|_{L^2}+\|\na\varphi\|_{L^2})\nonumber\\
&\quad\lesssim\|(\na\varphi,\na\phi,\na u_{t},\na u,\chi_{\sss 0}\De u,\chi_{\sss 0}\na\diver u)\|_{L^2}^2
+(\de+\epsilon)\|(\chi_{\sss 0}\De\varphi,\chi_{\sss 0}\na\diver \varphi)\|_{L^2}^2+\de\|\chi_{\sss 0}\De\phi\|_{L^2}^2,
\end{align}
where we have made the following calculations:
\begin{align*}
\int_{\Omega}&\na\De\varphi:\na\varphi\chi_{\sss 0}^2\,dx=-\int_{\Omega}\De\varphi\cdot\diver (\na\varphi\chi_{\sss 0}^2)\,dx=-\|\chi_{\sss 0}\De\varphi\|_{L^2}^2-\int_{\Omega}2\chi_{\sss 0}\De\varphi\cdot\na\varphi\cdot\na\chi_{\sss 0}\,dx;\\
\int_{\Omega}&\na^2\diver\varphi:\na\varphi\chi_{\sss 0}^2\,dx=-\int_{\Omega}\na\diver\varphi\cdot\diver (\na\varphi\chi_{\sss 0}^2)\,dx\\
&\qquad\qquad\qquad\quad\ \ =-\int_{\Omega}\na\diver\varphi\cdot\De\varphi\chi_{\sss 0}^2\,dx-\int_{\Omega}2\chi_{\sss 0}\na\diver\varphi\cdot\na\varphi\cdot\na\chi_{\sss 0}\,dx\\
&\qquad\qquad\qquad\quad\ \ =-\int_{\Omega}\na\diver\varphi\cdot(\na\diver\varphi-\cur\cur\varphi)\chi_{\sss 0}^2\,dx-\int_{\Omega}2\chi_{\sss 0}\na\diver\varphi\cdot\na\varphi\cdot\na\chi_{\sss 0}\,dx\\
&\qquad\qquad\qquad\quad\ \ =-\|\chi_{\sss 0}\na\diver \varphi\|_{L^2}^2-\int_{\Omega}2\chi_{\sss 0}\na\diver\varphi\cdot(\na\varphi\cdot\na\chi_{\sss 0}-\cur\varphi\times\na\chi_{\sss 0})\,dx;\\
-&\int_{\Omega}\na^2\phi:\na\varphi\chi_{\sss 0}^2 \,dx=\int_{\Omega}\na\phi\cdot\diver
(\na\varphi\chi_{\sss 0}^2) \,dx\\
&\qquad\qquad\qquad\quad\ \ =\int_{\Omega}2\chi_{\sss 0}\na\phi\cdot\na\varphi\cdot\na\chi_{\sss 0} \,dx+
\int_{\Omega}\na\phi\cdot\De\varphi\chi_{\sss 0}^2 \,dx\nonumber\\
&\qquad\qquad\qquad\quad\ \ =\int_{\Omega}2\chi_{\sss 0}\na\phi\cdot\na\varphi\cdot\na\chi_{\sss 0} \,dx+
\int_{\Omega}\na\phi\cdot(\na\diver\varphi-\cur\cur\varphi)\chi_{\sss 0}^2 \,dx\nonumber\\
&\qquad\qquad\qquad\quad\ \ =-\|\chi_{\sss 0}\De\phi\|_{L^2}^2+\int_{\Omega}\De\phi O(|\na\varphi|^2)\chi_{\sss 0}^2\,dx\nonumber\\
&\qquad\qquad\qquad\quad\ \ \quad+\int_{\Omega}2\chi_{\sss 0}\left[\na\phi\cdot\na\varphi\cdot\na\chi_{\sss 0}-\diver\varphi\na\phi\cdot\na \chi_{\sss 0}-\na\phi\cdot(\cur\varphi\times\na\chi_{\sss 0})\right] \,dx.
\end{align*}
Taking $\epsilon>0$ to be small enough in \eqref{3.44}, since $\de$ is small, we deduce
\begin{align}\label{san-in-2}
\|\chi_{\sss 0}(\na^2\varphi,\na^2\phi)\|_{L^2}^2
&\lesssim\|(\na u,\na\varphi,\na\phi,\na u_t,\chi_{\sss 0}\na^2u)\|_{L^2}^2.
\end{align}
Similar to \eqref{san-in-2}, we can deduce
\begin{align}\label{20210714-2}
\|\chi_{\sss 0}(\na^3\varphi,\na^3\phi)\|_{L^2}^2\lesssim\|(\na^2\varphi,\na^2\phi,\na u_{t},\na u,\chi_{\sss 0}\na^3u)\|_{L^2}^2
+\de(\|\na^2u\|_{H^1}^2+\|\na^3\varphi\|_{L^2}^2).
\end{align}

{\it Step 3.} Let $\eta>0$ be a small but fixed constant and $\epsilon>0$ be small enough. Computing $\eqref{san-in-2}\times\eta+\eqref{5-23-3}$ and $\eqref{20210714-2}\times\eta+\eqref{6-02-3}$, respectively, we deduce \eqref{3.26} and \eqref{3.27}.
\end{proof}

Unlike the half-space case, we need to split the boundary and then flatten it locally by introducing a suitable coordinate transformation. By the finite covering theorem, there exist finitely many bounded open domains $\{\Theta_{j}\}_{j=1}^{N}\subset\mathbb{R}^3$, such that $\pa\Omega\subset\cup_{j=1}^{N}\Theta_{j}.$ The local coordinates $y=(y_{1},y_{2},y_{3})$ in each open set $\Theta_{j}$ will satisfy the conditions as below:

(1) The surface $\Theta_{j}\cap\pa\Omega$ is the image of a smooth vector function $z^{j}(y_{1},y_{2})=(z_{1}^{j},z_{2}^{j},z_{3}^{j})(y_{1},y_{2})$ (eg. take the local geodesic polar coordinate), satisfying
\begin{align*}
|z_{y_{1}}^{j}|=1,\ z_{y_{1}}^{j}\cdot z_{y_{2}}^{j}=0\quad \mbox{and}\quad |z_{y_{2}}^{j}|\ge c,
\end{align*}
where $c>0$ is a constant independent of $j$ $(j=1,2,\dots, N)$.

(2) Any $x=(x_{1},x_{2},x_{3})^T\in\Theta_{j}$ can be expressed as
\begin{align}\label{3.46}
x_{i}:=\Upsilon_{i}(y)=y_{3}\mathcal{N}_{i}^{j}(y_{1},y_{2})+z_{i}^{j}(y_{1},y_{2})\quad \mbox{for}\quad i=1,2,3,
\end{align}
where $\mathcal{N}_{i}^{j}(y_{1},y_{2})$ denotes the unit outward normal vector at the boundary point $(y_{1},y_{2},0)$.

Without causing any misunderstanding, we shall omit the superscript $j$ below. We define a set of orthonormal basis as follow:
\[e^{1}=z_{y_{1}},   \quad e^2=\frac{z_{y_{2}}}{|z_{y_{2}}|},\quad \mbox{and} \quad \mathcal{N}=e^{1}\times e^{2},\]
with $e^{1}=(e^{1}_i)$, $e^{2}=(e^{2}_i)$, and $\mathcal{N}=(\mathcal{N}_i)$, $i=1, 2, 3$.

Using the Frenet-Serret's formula (cf. \cite{do-Carmo2016}), there exist smooth functions $(\alpha_{1},\beta_{1},\gamma_{1},\alpha_{2},\beta_{2},\gamma_{2})$ of $(y_{1},y_{2})$ satisfying
\begin{equation*}
\frac{\pa}{\pa y_{1}}\left[
 \begin{array}{ccc}
    e^1_i\smallskip\\
    e^2_i\smallskip\\
    \mathcal{N}_i\\
 \end{array}
\right]=\left[
 \begin{array}{ccc}
    0&-\gamma_{1}&-\alpha_{1}\smallskip\\
    \gamma_{1}&0&-\beta_{1}\smallskip\\
    \alpha_{1}&\beta_{1}&0\\
 \end{array}
\right]\left[
 \begin{array}{ccc}
    e^1_i\smallskip\\
    e^2_i\smallskip\\
    \mathcal{N}_i\\
 \end{array}
\right],
\end{equation*}\smallskip
\begin{equation*}
\frac{\pa}{\pa y_{2}}\left[
 \begin{array}{ccc}
    e^1_i\smallskip\\
    e^2_i\smallskip\\
    \mathcal{N}_i\\
 \end{array}
\right]=\left[
 \begin{array}{ccc}
    0&-\gamma_{2}&-\alpha_{2}\smallskip\\
    \gamma_{2}&0&-\beta_{2}\smallskip\\
    \alpha_{2}&\beta_{2}&0\\
 \end{array}
\right]\left[
 \begin{array}{ccc}
    e^1_i\smallskip\\
    e^2_i\smallskip\\
    \mathcal{N}_i\\
 \end{array}
\right].
\end{equation*}
And the Jacobian $J$ of the transform \eqref{3.46} can be written as
\begin{align}\label{3.47}
J=\Upsilon_{y_{1}}\times\Upsilon_{y_{2}}\cdot\mathcal{N}=|z_{y_{2}}|+(\alpha_{1}|z_{y_{2}}|+\beta_{2})y_{3}
+(\alpha_{1}\beta_{2}-\beta_{1}\alpha_{2})y_{3}^2.
\end{align}
Letting $y_{3}$ to be small enough so that $J\ge c/2$ in \eqref{3.47}, thus the transform \eqref{3.46} is regular. In other words, the vector-valued function $\Upsilon(y):=(\Upsilon_{1},\Upsilon_{2},\Upsilon_{3})^T(y)$ is invertible. So the partial derivatives $(y_{1},y_{2},y_{3})_{x_{i}}(x)$ make sense and can be represented by
\begin{align}\label{3.48}
\begin{cases}
\pa_{x_{i}}y_{1}=\frac{1}{J}(\Upsilon_{y_{2}}\times\Upsilon_{y_{3}})_{i}=\frac{1}{J}
(\mathcal{A}e_{i}^{1}+\mathcal{B}e_{i}^{2})=:a_{1i},\smallskip\\
\pa_{x_{i}}y_{2}=\frac{1}{J}(\Upsilon_{y_{3}}\times\Upsilon_{y_{1}})_{i}=\frac{1}{J}
(\mathcal{C}e_{i}^{1}+\mathcal{D}e_{i}^{2})=:a_{2i},\smallskip\\
\pa_{x_{i}}y_{3}=\frac{1}{J}(\Upsilon_{y_{1}}\times\Upsilon_{y_{2}})_{i}=\mathcal{N}_{i}=:a_{3i},
\end{cases}
\end{align}
where $\mathcal{A}=|z_{y_{2}}|+\beta_{2}y_{3}$, $\mathcal{B}=-y_{3}\alpha_{2}$, $\mathcal{C}=-\beta_{1}y_{3}$, $\mathcal{D}=1+\alpha_{1}y_{3}$,
and $J=\mathcal{A}\mathcal{D}-\mathcal{B}\mathcal{C}\ge c/2$.

By direct calculations, we deduce from \eqref{3.48} that
\begin{align*}
\sum_{i=1}^{3}a_{1i}a_{3i}=\sum_{i=1}^{3}a_{2i}a_{3i}=0,\quad \sum_{i=1}^{3}a_{3i}^2=|\mathcal{N}|^2=1
\end{align*}
and
\begin{align*}
\pa_{x_{i}}=a_{1i}\pa_{y_1}+a_{2i}\pa_{y_2}+a_{3i}\pa_{y_3}=\frac{1}{J}
(\mathcal{A}e_{i}^{1}+\mathcal{B}e_{i}^{2})\pa_{y_1}+\frac{1}{J}
(\mathcal{C}e_{i}^{1}+\mathcal{D}e_{i}^{2})\pa_{y_2}+\mathcal{N}_{i}\pa_{y_3}.
\end{align*}
Define the material derivative
\begin{align*}
\frac{D}{Dt}:=\pa_t+u\cdot\nabla.
\end{align*}
Taking the divergence operator $\diver $ to both sides of $\eqref{2.3}$, we have
\begin{align}\label{2.7}
\frac{D\diver\varphi}{Dt}+\diver  u=-\trace(\na u\na\varphi)=-(\na u)^{T}:\na\varphi.
\end{align}
Therefore, in each $\Theta_{j}$, we can reformulate \eqref{2.7}, Eq. $\eqref{2.8}_{1}$ and $\diver \varphi$ in the local coordinates $(y_{1},y_{2},y_{3})$ as below:
\begin{align}
&\mathcal{L}_{1}:=\frac{D\diver\varphi}{Dt}+\frac{1}{J}[(\mathcal{A}e^1+\mathcal{B}e^2)\cdot u_{y_1}
+(\mathcal{C}e^1+\mathcal{D}e^2)\cdot u_{y_2}+J\mathcal{N}\cdot u_{y_3}]=\mathcal{R}_{1},\nonumber\\
&\mathcal{L}_{2}:=u_{t}+u-\frac{\mu}{J^2}[(\mathcal{A}^2+\mathcal{B}^2) u_{y_1y_1}+2(\mathcal{A}\mathcal{C}+\mathcal{B}\mathcal{D})u_{y_1y_2}
+(\mathcal{C}^2+\mathcal{D}^2)u_{y_2y_2}+J^2u_{y_3y_3}]\nonumber\\
&\qquad\quad + (\mbox{first\ order\ terms\ of}\ u)+\frac{1}{J}(\mathcal{A}e^1+\mathcal{B}e^2)\left[(\mu+\lambda)\frac{D\diver\varphi}{Dt}+\diver \varphi-\phi\right]_{y_1}\nonumber\\
&\qquad\quad +\frac{1}{J}(\mathcal{C}e^1+\mathcal{D}e^2)\left[(\mu+\lambda)\frac{D\diver\varphi}{Dt}+\diver \varphi-\phi\right]_{y_2}
+\mathcal{N}\left[(\mu+\lambda)\frac{D\diver\varphi}{Dt}+\diver \varphi-\phi\right]_{y_3}\nonumber\\
&\qquad\quad +\frac{1}{J^2}[(\mathcal{A}^2+\mathcal{B}^2) \varphi_{y_1y_1}+2(\mathcal{A}\mathcal{C}+\mathcal{B}\mathcal{D})\varphi_{y_1y_2}
+(\mathcal{C}^2+\mathcal{D}^2)\varphi_{y_2y_2}+J^2\varphi_{y_3y_3}]\nonumber\\
&\qquad\quad +(\mbox{first\ order\ terms\ of}\ \varphi)=\mathcal{R}_{2},\nonumber\\
&\diver \varphi-\frac{1}{J}[(\mathcal{A}e^1+\mathcal{B}e^2)\cdot \varphi_{y_1}
+(\mathcal{C}e^1+\mathcal{D}e^2)\cdot\varphi_{y_2}+J\mathcal{N}\cdot\varphi_{y_3}]=0,\label{3.47'}
\end{align}
where
\begin{align*}
\mathcal{R}_{1}:=-(\na u)^{T}:\na\varphi,
\end{align*}
\begin{align*}
\mathcal{R}_2&:=-u\cdot\na u-(1-\tfrac{1}{\rho})[\mu \Delta u+(\mu+\lambda) \na\diver u]-(\tfrac{P'(\rho)}{\rho}-1)\na [\diver \varphi+O(|\na\varphi|^2)]\nonumber\\
&\quad+O(|\na\varphi|)\na O(|\na\varphi|)+(\mu+\lambda)\na\mathcal{R}_{1}.
\end{align*}
We denote the tangential derivatives by $\pa=(\pa_{y_{1}},\pa_{y_{2}})$ and assume $\chi_{\sss j}\in C_{0}^{\infty}(\Theta_{j})$ be any fixed function. Then
\begin{align*}
\begin{cases}
\chi_{\sss j}\pa^{k}u\mid_{\pa\Omega_{j}^{-1}}=\chi_{\sss j}\pa^{k}\varphi\mid_{\pa\Omega_{j}^{-1}}=0,\\
\chi_{\sss j}\pa^{k}\phi\mid_{\pa\Omega_{j}^{-1}}=0\quad\mbox{or}\quad\chi_{\sss j}\pa^{k}\na\phi\cdot\nu\mid_{\pa\Omega_{j}^{-1}}=0,
\end{cases}
\end{align*}
where $0\le k\le2$ and $\Omega_{j}^{-1}:=\{y|y=\Upsilon^{-1}(x), x\in\Omega_{j}=\Theta_{j}\cap\Omega\}.$

Hereafter, the higher-order estimates near the boundary for $(u,\varphi,\phi)$ in an exterior domain with a compact boundary can be constructed as below.

\begin{Lemma}\label{le3.6}
Let $\chi_{\sss j}\in C_{0}^{\infty}(\Theta_{j})$ $(j=1,2,\dots,N)$ be any fixed function. It holds that
\begin{align}\label{3.49}
\frac{d}{dt}&\|\chi_{\sss j}\pa(u,\diver \varphi,\na\varphi,\na\phi)\|_{L^2}^2
+\|\chi_{\sss j}\pa u\|_{L^2}^2+\|\chi_{\sss j}\pa\na(u,\varphi,\phi)\|_{L^2}^2\nonumber\\
&\lesssim \|\na(u,\varphi,\phi,u_t)\|_{L^2}^2+\de\|\na(u,\varphi)\|_{H^1}^2
\end{align}
and
\begin{align}\label{3.50}
\frac{d}{dt}&\|\chi_{\sss j}\pa^2(u,\diver\varphi,\na\varphi,\na\phi)\|_{L^2}^2+\|\chi_{\sss j}\pa^2u\|_{L^2}^2
+\|\chi_{\sss j}\pa^2\na(u,\varphi,\phi)\|_{L^2}^2\nonumber\\
&\lesssim \|\na(u,u_t)\|_{L^2}^2+\|\na^2(u,\varphi,\phi)\|_{L^2}^2+\de(\|\na\varphi\|_{H^2}^2
+\|\na^3u\|_{L^2}^2).
\end{align}

\end{Lemma}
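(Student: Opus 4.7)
\medskip

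\noindent\textbf{Proof plan.} The strategy parallels Lemma~\ref{le3.5'} (the half-space tangential estimate), but carried out in the local coordinates $(y_1,y_2,y_3)$ on each patch $\Theta_j$ with a compactly supported cutoff $\chi_{\sss j}\in C_0^\infty(\Theta_j)$. The crucial geometric point is that since $u$, $\varphi$ (and either $\phi$ or $\na\phi\cdot\nu$) vanish on $\pa\Omega\cap\Theta_j$, the tangential derivatives $\pa=(\pa_{y_1},\pa_{y_2})$ commute with the boundary condition, so that $\chi_{\sss j}\pa^k u$, $\chi_{\sss j}\pa^k\varphi$ vanish on $\pa\Omega_j^{-1}$ for $k=1,2$. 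Hence integration by parts in $y$ produces no boundary terms, exactly as in the half-space case.

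\emph{Step 1 (tangential energy).} Apply $\pa$ to the reformulated system \eqref{3.47'}, namely $\mathcal{L}_1=\mathcal{R}_1$ and $\mathcal{L}_2=\mathcal{R}_2$, together with $\diver u = -\diver\varphi_t - \diver(u\cdot\na\varphi)$ and $\Delta\phi=\diver\varphi + O(|\na\varphi|^2)$. Then form the analogue of the identity $\pa(L_1-R_1)\cdot\pa u+\pa(L_2-R_2)\cdot(-\pa\Delta\varphi)=0$ used in Lemma~\ref{le3.5'}, multiply by $\chi_{\sss j}^2$, and integrate over $\Omega$ (equivalently $\Omega_j^{-1}$ via the regular transformation $\Upsilon$). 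The principal part produces the dissipation $\|\chi_{\sss j}\pa u\|_{L^2}^2+\mu\|\chi_{\sss j}\pa\na u\|_{L^2}^2$ plus the time derivative $\tfrac{d}{dt}\|\chi_{\sss j}\pa(u,\diver\varphi,\na\varphi,\na\phi)\|_{L^2}^2$. The remaining terms split into: (i) nonlinear remainders from $\mathcal{R}_1,\mathcal{R}_2$ and from $\diver(u\cdot\na\varphi)$, estimated via \eqref{R_1-L2}, \eqref{e1} and H\"older / Sobolev as in Lemma~\ref{le3.5'}; (ii) commutator terms $[\chi_{\sss j}\pa,\cdot]$ with the smooth but variable coefficients $\mathcal{A}/J,\mathcal{B}/J,\dots$ and with $\chi_{\sss j}$ itself, which are of lower differential order in $u,\varphi,\phi$ and are absorbed by $\|\na(u,\varphi,\phi)\|_{L^2}^2$ and the interior bound; (iii) the time-derivative term involving $\pa u_t$, controlled by $\|\na u_t\|_{L^2}^2$.

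\emph{Step 2 (tangential dissipation for $\na\varphi$, $\na\phi$).} Test $\pa(\mathcal{L}_2-\mathcal{R}_2)$ against $\chi_{\sss j}^2\pa\varphi$ and integrate by parts. Using the Poisson equation $\Delta\phi=\diver\varphi+O(|\na\varphi|^2)$ exactly as in \eqref{3.44'}, the term $\int\chi_{\sss j}^2\pa\na\phi\cdot\pa\varphi$ converts into $\|\chi_{\sss j}\pa\na\phi\|_{L^2}^2$ plus harmless quadratics, and the $\int\chi_{\sss j}^2(\pa\De\varphi+\pa\na\diver\varphi)\cdot\pa\varphi$ terms yield $\|\chi_{\sss j}\pa\na\varphi\|_{L^2}^2+\|\chi_{\sss j}\pa\diver\varphi\|_{L^2}^2$. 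The right-hand side reduces to $\|(\na u,\na\varphi,\na u_t,\chi_{\sss j}\pa\na u)\|_{L^2}^2$ plus $\de$-small terms, again with extra commutators from $\chi_{\sss j}$ and from the coefficients that are lower order and thus absorbable. Combining this with Step~1 via a small but fixed multiplier $\eta>0$ as in Step~3 of Lemma~\ref{le3.5'} yields \eqref{3.49}. The second-order estimate \eqref{3.50} is obtained by repeating the same two steps with $\pa^2$ in place of $\pa$, applying Lemma~\ref{le2.2} to control the extra commutators $[\pa^2,\chi_{\sss j}],[\pa^2,\mathcal{A}/J],\dots$.

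\emph{Main obstacle.} The chief technical difficulty, which has no counterpart in the half-space case, is bookkeeping of the lower-order terms generated by the coordinate change $x=\Upsilon(y)$ and by the localization $\chi_{\sss j}$: the first-order terms of $u$ and $\varphi$ displayed in the expression of $\mathcal{L}_2$, together with the commutators $[\chi_{\sss j}\pa^k,a_{\ell i}\pa_{y_\ell}]$, produce a zoo of contributions containing $\na u,\na\varphi,\na\phi$ without localization. One must verify that each such term is indeed of lower differential order than the dissipation generated in Steps~1--2 (so it is controlled by $\|\na(u,\varphi,\phi,u_t)\|_{L^2}^2$ on the right of \eqref{3.49}, and by $\|\na(u,u_t)\|_{L^2}^2+\|\na^2(u,\varphi,\phi)\|_{L^2}^2$ on the right of \eqref{3.50}), and that the highest-order pieces of these commutators carry a $\chi_{\sss j}$ factor so that they pair with the localized dissipation. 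Once this accounting is done, the nonlinear remainders contribute only $\de$-small terms as displayed, and the estimates follow.
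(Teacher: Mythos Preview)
Your plan is correct and matches the paper's approach: the paper's proof is a single sentence, ``Similar to the proof of Lemma~\ref{le3.5}, we omit the details,'' and your outline is precisely a fleshed-out version of that hybrid---the cutoff/commutator bookkeeping from Lemma~\ref{le3.5} (the $\chi_{\sss 0}$ interior estimate) combined with the tangential-derivative boundary preservation from Lemma~\ref{le3.5'}. Your identification of the ``main obstacle'' (the lower-order debris from $[\chi_{\sss j}\pa^k,a_{\ell i}\pa_{y_\ell}]$ and from the first-order terms of $u,\varphi$ in $\mathcal{L}_2$, all of which lose a derivative or a $\chi_{\sss j}$ factor and therefore land on the right-hand sides of \eqref{3.49}--\eqref{3.50}) is exactly the point, and nothing further is needed.
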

\begin{proof}
Similar to the proof of Lemma \ref{le3.5}, we omit the details.

\end{proof}

\begin{Lemma}\label{le3.8}
Let $\chi_{\sss j}\in C_{0}^{\infty}(\Theta_{j})$ $(j=1,2,\dots,N)$ be any fixed function. It holds that
\begin{align}\label{3.56}
\frac{d}{dt}&\|\chi_{\sss j}\pa_{y_3}\diver\varphi\|_{L^2}^2
+\|\chi_{\sss j}\pa_{y_3}(\tfrac{D\diver\varphi}{Dt},\diver\varphi)\|_{L^2}^2\nonumber\\
&\lesssim\|(u,\na u,\na\varphi,\na\phi,u_{t})\|_{L^2}^2+\|\chi_{\sss j}\pa\na( u,\varphi)\|_{L^2}^2+\de\|(\na u,\na^2\varphi)\|_{H^1}^2
\end{align}
and
\begin{align}\label{3.57}
\frac{d}{dt}&\|\chi_{\sss j}\pa^{\kappa}\pa_{y_3}^{\iota+1}\diver\varphi\|_{L^2}^{2}
+\|\chi_{\sss j}\pa^{\kappa}\pa_{y_3}^{\iota+1}(\tfrac{D\diver\varphi}{Dt},\diver\varphi)\|_{L^2}^{2}\nonumber\\
&\lesssim\|(u,\na u,\na \varphi,\na\phi,u_{t})\|_{H^1}^2+\|\chi_{\sss j}\pa^{\kappa+1}\pa_{y_3}^{\iota}\na (u,\varphi)\|_{L^2}^2+\de\|\na( u,\varphi)\|_{H^2}^2,
\end{align}
where $\kappa+\iota=1$.
\end{Lemma}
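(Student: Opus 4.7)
The plan is to adapt the argument of Lemma \ref{le3.8'} to the curved boundary setting, working in each of the finitely many local coordinate patches $\Omega_j^{-1}$ introduced before the lemma. In that patch the equations \eqref{2.7} and $\eqref{2.8}_1$ are already rewritten as $\mathcal{L}_1=\mathcal{R}_1$ and $\mathcal{L}_2=\mathcal{R}_2$ in \eqref{3.47'}. The key observation is that the tangential gradient contributions to the momentum equation sit in the $(e^1,e^2)$-plane while the normal gradient sits along $\mathcal{N}$; so dotting $\mathcal{L}_2=\mathcal{R}_2$ with $\mathcal{N}$ kills the two tangential groups $\tfrac{1}{J}(\mathcal{A}e^1+\mathcal{B}e^2)[\cdots]_{y_1}$ and $\tfrac{1}{J}(\mathcal{C}e^1+\mathcal{D}e^2)[\cdots]_{y_2}$ and isolates a scalar identity
\begin{align*}
(\mu+\lambda)\pa_{y_3}\!\bigl(\tfrac{D\diver\varphi}{Dt}\bigr)+\pa_{y_3}\diver\varphi=\widetilde{\mathcal R},
\end{align*}
where $\widetilde{\mathcal R}$ absorbs $\mathcal N\!\cdot u_t$, $\mathcal N\!\cdot u$, $\pa_{y_3}\phi$, $\mathcal N\!\cdot\mathcal R_2$, and the second-order contributions from $-\mu\De u$ and $\De\varphi$. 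In exact analogy with Lemma \ref{le3.8'}, these second-order pieces will be reorganized using $w=u-\varphi/\mu$ and the vector identity $-\mu\De w+\mu\na\diver w=-\mu\,\mathrm{curl}\,\mathrm{curl}\,w$; because $\mathrm{curl}\,\mathrm{curl}$ has no pure $\pa_{x_k}\pa_{x_k}$ term in its $k$-th component, all genuine $\pa_{y_3}^2$ normal derivatives of $\mathcal N\cdot w$ cancel in the $\mathcal N$-component and only tangential second derivatives and mixed $\pa\pa_{y_3}$-derivatives of $w$ survive (the contributions coming from the $y_1,y_2$-dependence of the frame $\{e^1,e^2,\mathcal N\}$ produce, via the Frenet–Serret formulas, only first-order terms in $\na u,\na\varphi$, harmless for the estimate).

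Once this identity is in hand, I would multiply it by $\chi_{\sss j}^{2}\pa_{y_3}\!\bigl(\tfrac{D\diver\varphi}{Dt}+\diver\varphi\bigr)$ and integrate over $\Omega_j^{-1}$, exactly as in \eqref{3.64'}. The cross term produces $\tfrac{d}{dt}\|\chi_{\sss j}\pa_{y_3}\diver\varphi\|_{L^2}^{2}$ after moving one material derivative onto the test quantity, up to a transport commutator $(u\cdot\na)\diver\varphi$ which costs only $\de$-factors by the a priori assumption \eqref{equivalent'}. The quadratic terms on the left give the full dissipation $\|\chi_{\sss j}\pa_{y_3}(\tfrac{D\diver\varphi}{Dt},\diver\varphi)\|_{L^{2}}^{2}$, and the right-hand side is bounded, via H\"older, Sobolev embeddings (Lemma \ref{le2.1}) and Cauchy's inequality, by the asserted quantity $\|(u,\na u,\na\varphi,\na\phi,u_t)\|_{L^2}^{2}+\|\chi_{\sss j}\pa\na(u,\varphi)\|_{L^{2}}^{2}+\de\|(\na u,\na^{2}\varphi)\|_{H^{1}}^{2}$, where the mixed derivative term $\chi_{\sss j}\pa\na$ absorbs exactly the tangential/mixed contributions identified above and the nonlinear $\mathcal R_2$ controls via \eqref{R_1-L2}, \eqref{e1} fall into the $\de$-absorbable part.

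For \eqref{3.57}, I would apply $\pa^{\kappa}\pa_{y_3}^{\iota}$ with $\kappa+\iota=1$ to the scalar identity above, multiply by $\chi_{\sss j}^{2}\pa^{\kappa}\pa_{y_3}^{\iota+1}\!\bigl(\tfrac{D\diver\varphi}{Dt}+\diver\varphi\bigr)$ and integrate. The case $\kappa=1$ is essentially a repetition of the $\pa$-estimate; the case $\iota=1$ is slightly delicate because one more $\pa_{y_3}$ has to commute with the geometric coefficients $\mathcal A,\mathcal B,\mathcal C,\mathcal D,J^{\pm 1}$ and with the material derivative, but since these coefficients and their derivatives are smooth and bounded, the extra commutator terms are first-order in $u,\varphi$ and can be absorbed into $\|\chi_{\sss j}\pa^{\kappa+1}\pa_{y_3}^{\iota}\na(u,\varphi)\|_{L^{2}}$ or the $\de\|\na(u,\varphi)\|_{H^{2}}$ slack.

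The main obstacle I anticipate is the bookkeeping of the geometric correction terms: distinguishing the $\pa_{y_3y_3}(\mathcal N\cdot w)$ cancellation (which is crucial and must be exact) from the spurious first-order terms produced by $\pa_{y_i}\mathcal N,\pa_{y_i}e^k$; making sure that when the material derivative $\tfrac{D}{Dt}$ is differentiated by $\pa^{\kappa}\pa_{y_3}^{\iota}$ the resulting convection commutators remain $\de$-controllable; and finally verifying that every term produced by $\mathcal R_2$ (including the extra $(\mu+\lambda)\na\mathcal R_1$) dies either in the $\de$-term or in the pointwise-lower-order list on the right-hand side. None of these steps is conceptually new compared with the half-space case of Lemma \ref{le3.8'}; the novelty is purely the tracking of variable frame coefficients, which the finite-covering plus local-flattening setup makes manageable.
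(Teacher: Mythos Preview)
Your strategy is correct and essentially the same as the paper's. The paper organizes the key cancellation a bit differently: rather than invoking the curl--curl identity on $w=u-\varphi/\mu$, it writes out $\pa_{y_3}$ of the local-coordinate expression for $\diver u$ (from $\mathcal L_1$) and for $\diver\varphi$ (from \eqref{3.47'}), then forms the linear combination $\mu\times(\pa_{y_3}\mathcal L_1)+(\mathcal N\cdot\mathcal L_2)-(\pa_{y_3}\text{ of }\eqref{3.47'})$, which kills $\mathcal N\cdot u_{y_3y_3}$ and $\mathcal N\cdot\varphi_{y_3y_3}$ directly and yields the scalar identity $(2\mu+\lambda)\pa_{y_3}(\tfrac{D\diver\varphi}{Dt})+2\pa_{y_3}\diver\varphi=\mathcal R$. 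Your displayed coefficients $(\mu+\lambda),1$ are what one has \emph{before} the $-\mu\na\diver w$ piece from your curl--curl decomposition is moved back to the left-hand side via $\diver w=-\tfrac{D\diver\varphi}{Dt}-\tfrac{1}{\mu}\diver\varphi-(\na u)^T{:}\na\varphi$; once that is done the two derivations coincide (and this step is not optional---otherwise $\widetilde{\mathcal R}$ still hides a term of the same size as the dissipation and the estimate does not close). Note also the sign: $-\mu\Delta w+\mu\na\diver w=+\mu\,\cur\,\cur\,w$. The remainder of your outline---multiplying by $\chi_{\sss j}^2\pa_{y_3}(\tfrac{D\diver\varphi}{Dt}+\diver\varphi)$, handling the transport commutator, and the $\pa^\kappa\pa_{y_3}^\iota$ step for \eqref{3.57}---matches the paper exactly.
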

\begin{proof}
Taking $\pa_{y_{3}}$ to $\mathcal{L}_{1}-\mathcal{R}_{1}=0$ and $\pa_{y_{3}}$ to \eqref{3.47'}, respectively, and multiplying $\mathcal{L}_{2}-\mathcal{R}_{2}=0$ by $\mathcal{N}$, we shall obtain
\begin{align}
&\pa_{y_3}(\tfrac{D\diver\varphi}{Dt})+\tfrac{1}{J}[(\mathcal{A}e^1+\mathcal{B}e^2)\cdot u_{y_1y_3}
+(\mathcal{C}e^1+\mathcal{D}e^2)\cdot u_{y_2y_3}+J\mathcal{N}\cdot u_{y_3y_3}]+O(\na u)=(\mathcal{R}_{1})_{y_3};\smallskip\label{3.60}\\
&\mathcal{N}\cdot u_{t}+\mathcal{N}\cdot u-\tfrac{\mu}{J^2}[(\mathcal{A}^2+\mathcal{B}^2)\mathcal{N}\cdot  u_{y_1y_1}+2(\mathcal{A}\mathcal{C}+\mathcal{B}\mathcal{D})\mathcal{N}\cdot u_{y_1y_2}
+(\mathcal{C}^2+\mathcal{D}^2)\mathcal{N}\cdot u_{y_2y_2}+J^2\mathcal{N}\cdot u_{y_3y_3}]\nonumber\\
&\qquad+\tfrac{1}{J^2}[(\mathcal{A}^2+\mathcal{B}^2)\mathcal{N}\cdot  \varphi_{y_1y_1}+2(\mathcal{A}\mathcal{C}+\mathcal{B}\mathcal{D})\mathcal{N}\cdot \varphi_{y_1y_2}
+(\mathcal{C}^2+\mathcal{D}^2)\mathcal{N}\cdot \varphi_{y_2y_2}+J^2\mathcal{N}\cdot \varphi_{y_3y_3}]\nonumber\\
&\qquad+[(\mu+\lambda)\tfrac{D\diver\varphi}{Dt}+\diver \varphi-\phi]_{y_3}+O(\na u)+O(\na \varphi)=\mathcal{N}\cdot \mathcal{R}_{2};\smallskip\label{3.61}\\
&\tfrac{1}{J}[(\mathcal{A}e^1+\mathcal{B}e^2)\cdot \varphi_{y_1y_3}
+(\mathcal{C}e^1+\mathcal{D}e^2)\varphi_{y_2y_3}+J\mathcal{N}\cdot \varphi_{y_3y_3}]-\diver\varphi_{y_3}+O(\na \varphi)=0.\label{3.62}
\end{align}
To cancel the terms $\mathcal{N}\cdot u_{y_3y_3}$ and $\mathcal{N}\cdot \varphi_{y_3y_3}$ in \eqref{3.61}, we calculate $\mu\times\eqref{3.60}+\eqref{3.61}-\eqref{3.62}$ to obtain
\begin{align}\label{3.63}
&(2\mu+\lambda)\pa_{y_{3}}(\tfrac{D\diver\varphi}{Dt})+2\pa_{y_3}\diver\varphi\nonumber\\
&\quad=\tfrac{\mu}{J^2}[(\mathcal{A}^2+\mathcal{B}^2)\mathcal{N}\cdot u_{y_1y_1}+2(\mathcal{A}\mathcal{C}+\mathcal{B}\mathcal{D})\mathcal{N}\cdot u_{y_1y_2}
+(\mathcal{C}^2+\mathcal{D}^2)\mathcal{N}\cdot u_{y_2y_2}]\nonumber\\
&\qquad-\mathcal{N}\cdot u_{t}-\mathcal{N}\cdot u+\phi_{y_3}-\tfrac{\mu}{J}[(\mathcal{A}e^1+\mathcal{B}e^2)\cdot u_{y_1y_3}
+(\mathcal{C}e^1+\mathcal{D}e^2)\cdot u_{y_2y_3}]\nonumber\\
&\qquad+O(\na u)-\tfrac{1}{J^2}[(\mathcal{A}^2+\mathcal{B}^2)\mathcal{N} \cdot \varphi_{y_1y_1}+2(\mathcal{A}\mathcal{C}+\mathcal{B}\mathcal{D})\mathcal{N}\cdot \varphi_{y_1y_2}
+(\mathcal{C}^2+\mathcal{D}^2)\mathcal{N}\cdot \varphi_{y_2y_2}]\nonumber\\
&\qquad+\tfrac{1}{J}[(\mathcal{A}e^1+\mathcal{B}e^2)\cdot \varphi_{y_1y_3}
+(\mathcal{C}e^1+\mathcal{D}e^2)\cdot \varphi_{y_2y_3}]\nonumber\\
&\qquad+O(\na\varphi)+\mu(\mathcal{R}_{1})_{y_3}
+\mathcal{N}\mathcal{R}_{2}:=\mathcal{R}.
\end{align}
Multiplying \eqref{3.63} by $\chi_{\sss j}^2\pa_{y_{3}}(\tfrac{D\diver\varphi}{Dt}+\diver\varphi)$ and then integrating it over $\Omega_{j}^{-1}$, we obtain
\begin{align}\label{3.64}
&\frac{2+2\mu+\lambda}{2}\frac{d}{dt}\|\chi_{\sss j}\pa_{y_3}\diver\varphi\|_{L^2}^2
+(2\mu+\lambda)\|\chi_{\sss j}\pa_{y_3}(\tfrac{D\diver\varphi}{Dt})\|_{L^2}^2
+2\|\chi_{\sss j}\pa_{y_3}\diver\varphi\|_{L^2}^2\nonumber\\
&\quad=-(2+2\mu+\lambda)\int_{\Omega_{j}^{-1}}(u\cdot\na\diver\varphi)_{y_3}\diver\varphi_{y_3}\chi_{\sss j}^2\,dy+\int_{\Omega_{j}^{-1}}
\chi_{\sss j}^2\pa_{y_{3}}(\tfrac{D\diver\varphi}{Dt}+\diver\varphi)\mathcal{R}\,dy\nonumber\\
&\quad:=K_1+K_2.
\end{align}
Then, the right-hand side of \eqref{3.64} can be easily estimated as follows:
\begin{align}\label{3.65}
K_1
&\lesssim\int_{\Omega_{j}^{-1}}\chi_{\sss j}^2\big|u_{y_3}\cdot\na \diver\varphi\diver\varphi_{y_3}\big|\,dy
+\int_{\Omega_{j}^{-1}}\big|\diver\varphi_{y_3}^2\diver (u\chi_{\sss j}^2)\big|\,dy\nonumber\\
&\lesssim\|u\|_{H^2}\|\na \diver\varphi\|_{H^1}^2\lesssim\de\|\na \diver\varphi\|_{H^1}^2
\end{align}
and
\begin{align}\label{3.66}
K_2&\le\frac{2\mu+\la}{2}\|\chi_{\sss j}\pa_{y_3}(\tfrac{D\diver\varphi}{Dt})\|_{L^2}^2+\|\chi_{\sss j}\pa_{y_3}\diver\varphi\|_{L^2}^2+
C\|\chi_{\sss j}\mathcal{R}\|_{L^2}^2\nonumber\\
&\le\frac{2\mu+\la}{2}\|\chi_{\sss j}\pa_{y_3}(\tfrac{D\diver\varphi}{Dt})\|_{L^2}^2
+\|\chi_{\sss j}\pa_{y_3}\diver\varphi\|_{L^2}^2+C\|(u,\na u,\na\varphi,\na\phi,u_{t})\|_{L^2}^2+C\|\chi_{\sss j}(\pa\na u,\pa\na \varphi)\|_{L^2}^2\nonumber\\
&\quad+\|\na\varphi\|_{L^\infty}^2\|\na^2u\|_{L^2}^2+\|\na u\|_{L^6}^2\|\na^2\varphi\|_{L^3}^2+\|u\|_{L^6}^2\|\na u\|_{L^3}^2+\|\na\varphi\|_{L^\infty}^2\|\na^2\varphi\|_{L^2}^2\nonumber\\
&\le\frac{2\mu+\la}{2}\|\chi_{\sss j}\pa_{y_3}(\tfrac{D\diver\varphi}{Dt})\|_{L^2}^2
+\|\chi_{\sss j}\pa_{y_3}\diver\varphi\|_{L^2}^2+C\|(u,\na u,\na\varphi,\na\phi,u_{t})\|_{L^2}^2+C\|\chi_{\sss j}(\pa\na u,\pa\na \varphi)\|_{L^2}^2\nonumber\\
&\quad+\de\|(\na u,\na^2\varphi)\|_{H^1}^2.
\end{align}
Substituting \eqref{3.65}--\eqref{3.66} into \eqref{3.64}, we get \eqref{3.56}.

Applying $\pa^{k}\pa^{\iota}_{y_3}(k+\iota=1)$ to \eqref{3.63}, multiplying the identity by $\pa^{k}\pa^{\iota+1}_{y_3}(\frac{D\diver\varphi}{Dt}+\diver \varphi)\chi_{\sss j}^2,$ integrating over $\Omega_{j}^{-1}$ by parts and as in the proof of \eqref{3.56}, we can obtain \eqref{3.57}.
\end{proof}

\begin{Lemma}\label{le3.7}
Let $\chi_{\sss j}\in C_{0}^{\infty}(\Theta_{j})$ $(j=1,2,\dots,N)$ be any fixed function. It holds that
\begin{align}\label{3.53}
\frac{d}{dt}\|\chi_{\sss j}(\pa u,\pa\na u,\pa\diver u)\|_{L^2}^2+\|\chi_{\sss j}\pa u_{t}\|_{L^2}^2\lesssim\|(\na^2u,\na^3\varphi,\na^2\phi)\|_{L^2}^2+\de(\|\na u\|_{H^2}^2+\|\na^2\varphi\|_{H^1}^2).
\end{align}
\end{Lemma}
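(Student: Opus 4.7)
The plan is to mimic the half-space argument of Lemma~\ref{le3.7'} by working in the local coordinates $y=(y_1,y_2,y_3)$ on each open set $\Theta_j$, with the tangential derivatives $\pa=(\pa_{y_1},\pa_{y_2})$ and the cutoff $\chi_{\sss j}\in C_0^\infty(\Theta_j)$. First I would apply $\pa$ to the reformulated momentum equation $\mathcal{L}_2-\mathcal{R}_2=0$ (equivalently, to $(\ref{2.8})_1$) and test the resulting identity against $\chi_{\sss j}^2\pa u_t$, integrating over $\Omega_j^{-1}$. Since $\pa$ preserves the no-slip boundary condition ($\chi_{\sss j}\pa u|_{\pa\Omega_j^{-1}}=0$), integration by parts of the viscous second-order terms yields, up to constants depending on the metric factors $J,\mathcal{A},\mathcal{B},\mathcal{C},\mathcal{D}$, the principal expression
\begin{equation*}
\tfrac{1}{2}\tfrac{d}{dt}\bigl(\mu\|\chi_{\sss j}\pa\na u\|_{L^2}^{2}+(\mu+\lambda)\|\chi_{\sss j}\pa\diver u\|_{L^2}^{2}+\|\chi_{\sss j}\pa u\|_{L^2}^{2}\bigr)+\|\chi_{\sss j}\pa u_t\|_{L^2}^{2},
\end{equation*}
plus commutator terms in which at least one derivative falls on $\chi_{\sss j}$ or on the smooth coefficients $\mathcal{A},\mathcal{B},\mathcal{C},\mathcal{D},J$; these commutators are at most quadratic in $\pa\na u$ and $\pa u$ with bounded weights and are absorbed into $\|\na^2 u\|_{L^2}^{2}$ on the right-hand side of \eqref{3.53} via Cauchy's inequality.

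Next I would bound the remaining linear forcing $\pa[\De\varphi+\na\diver\varphi-\na\phi]$ tested against $\chi_{\sss j}^2\pa u_t$ by Cauchy's inequality: for any $\va>0$,
\begin{equation*}
\Bigl|\int_{\Omega_j^{-1}}\chi_{\sss j}^2\pa u_t\cdot\pa[\De\varphi+\na\diver\varphi-\na\phi]\,dy\Bigr|\le\va\|\chi_{\sss j}\pa u_t\|_{L^2}^{2}+C_\va\|(\na^3\varphi,\na^2\phi)\|_{L^2}^{2}.
\end{equation*}
The nonlinear contribution $\pa\mathcal{R}_2$ is controlled by the commutator/product estimate of Lemma~\ref{le2.2} together with \eqref{na-R_1-L2}, giving
\begin{equation*}
\Bigl|\int_{\Omega_j^{-1}}\chi_{\sss j}^2\pa u_t\cdot\pa\mathcal{R}_2\,dy\Bigr|\le\va\|\chi_{\sss j}\pa u_t\|_{L^2}^{2}+C_\va\de(\|\na u\|_{H^2}^{2}+\|\na^2\varphi\|_{H^1}^{2}).
\end{equation*}

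Finally, choosing $\va$ sufficiently small and exploiting the smallness of $\de$ from \eqref{equivalent'}, I can absorb every copy of $\|\chi_{\sss j}\pa u_t\|_{L^2}^{2}$ into the left-hand side, which yields \eqref{3.53}. The main technical obstacle, compared with the flat half-space case of Lemma~\ref{le3.7'}, is managing the commutators generated by the cutoff $\chi_{\sss j}$ and by the variable metric coefficients under the local straightening \eqref{3.46}--\eqref{3.48}: these commutators produce extra tangentially-differentiated second-order derivatives of $u$ with bounded weights, and the point is to arrange the integration by parts so they contribute only as $\|\na^2 u\|_{L^2}^{2}$ on the right-hand side — precisely the extra term in \eqref{3.53} that is absent from \eqref{3.53'} — rather than as uncontrolled third-order quantities.
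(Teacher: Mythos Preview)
Your proposal is correct and follows essentially the same approach as the paper: apply the tangential derivative $\pa$ to the momentum equation, test against $\chi_{\sss j}^2\pa u_t$ on $\Omega_j^{-1}$, integrate by parts the viscous terms to obtain the energy identity, and bound the linear forcing by $\|(\na^3\varphi,\na^2\phi)\|_{L^2}^2$, the nonlinear remainder via \eqref{na-R_1-L2}, and the cutoff commutators $\int 2\chi_{\sss j}\pa\na u\cdot\pa u_t\cdot\na\chi_{\sss j}\,dy$ by $\va\|\chi_{\sss j}\pa u_t\|_{L^2}^2+C_\va\|\na^2 u\|_{L^2}^2$. The paper's proof is exactly this computation; your identification of the extra $\|\na^2 u\|_{L^2}^2$ term (absent in the flat case \eqref{3.53'}) as arising from the cutoff/coefficient commutators is precisely the point.
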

\begin{proof}
Taking the operator $\pa$ on Eq. $\eqref{2.8}_{1},$ multiplying it with $\pa u_{t}\chi_{\sss j}^2$, and then integrating the identity $\pa(L_1-R_1):\pa u_{t}\chi_{\sss j}^2=0$ over $\Omega_{j}^{-1}$, by H\"{o}lder's and Cauchy's inequalities and Lemma \ref{le2.1}, we get
\begin{align*}
\frac{1}{2}&\frac{d}{dt}\int_{\Omega_{j}^{-1}}\chi_{\sss j}^2[\mu|\pa\na u|^2+(\mu+\lambda)|\pa\diver  u|^2+|\pa u|^2]\,dy+\int_{\Omega_{j}^{-1}}|\chi_{\sss j}\pa u_{t}|^2\,dy\nonumber\\
&=-\int_{\Omega_{j}^{-1}}\chi_{\sss j}^2\pa u_{t}:\pa(\na \diver \varphi+\Delta\varphi-\na\phi)\,dy
+\int_{\Omega_{j}^{-1}}\chi_{\sss j}^2\pa u_{t}:\pa R_1\,dy\nonumber\\
&\quad-\mu\int_{\Omega_{j}^{-1}}2\chi_{\sss j}\pa\na u\cdot\pa u_{t}\cdot\na\chi_{\sss j}\,dy-(\mu+\lambda)\int_{\Omega_{j}^{-1}}2\chi_{\sss j}\pa\diver  u\cdot\pa u_{t}\cdot\na\chi_{\sss j}\,dy\nonumber\\
&\lesssim\|\chi_{\sss j}\pa u_{t}\|_{L^2}\big(\|\na^2\phi\|_{L^2}+\|\na^3\varphi\|_{L^2}+\|\na^2u\|_{L^2}\big)
+\|\chi_{\sss j}\pa u_{t}\|_{L^2}\|\na R_1\|_{L^2}\nonumber\\
&\lesssim\de(\|\chi_{\sss j}\pa u_{t}\|_{L^2}^2+\|\na u\|_{H^2}^2+\|\na^2\varphi\|_{H^1}^2)+\|\na^2\phi\|_{L^2}^2+\|\na^3\varphi\|_{L^2}^2+\|\na^2u\|_{L^2}^2.
\end{align*}
Thus, we immediately deduce \eqref{3.53} from the above inequality.
\end{proof}

Similar to the Lemma \ref{le3.10'}, we shall obtain the higher-order dissipation estimates for $(u,\varphi)$ in an exterior domain with a compact boundary.
\begin{Lemma}\label{le3.10}
Let $\chi_{\sss j}\in C_{0}^{\infty}(\Theta_{j})$ $(j=1,2,\dots,N)$ be any fixed function. It holds that
\begin{align}\label{3.80}
\frac{d}{dt}\|\na^2\varphi\|_{L^2}^2+\|\na^2(u,\varphi)\|_{L^2}^2&\lesssim\|(u_{t},u,\na u,\na\varphi,\na\phi)\|_{L^2}^2+\|\chi_{\sss 0}\na^2(u,\varphi)\|_{L^2}^2+\|\chi_{\sss j}\pa\na(u,\varphi)\|_{L^2}^2\nonumber\\
&\quad+\|\chi_{\sss j}\pa_{y_3}(\tfrac{D\diver\varphi}{Dt},\diver\varphi)\|_{L^2}^2;
\end{align}
\begin{align}\label{3.81}
\frac{d}{dt}\|\na^3\varphi\|_{L^2}^2+\|\na^3(u,\varphi)\|_{L^2}^2&\lesssim\|(u_{t},u,\na u,\na\varphi,\na\phi)\|_{H^1}^2+\|\chi_{\sss 0}\na^3(u,\varphi)\|_{L^2}^2+\|\chi_{\sss j}\pa\na^2(u,\varphi)\|_{L^2}^2\nonumber\\
&\quad+\|\chi_{\sss j}\pa_{y_3}\na(\tfrac{D\diver\varphi}{Dt},\diver\varphi)\|_{L^2}^2;
\end{align}
\begin{align}\label{3.81'}
\frac{d}{dt}\|\chi_{\sss j}\pa\na^2\varphi\|_{L^2}^2+\|\chi_{\sss j}\pa\na^{2}(u,\varphi)\|_{L^2}^2&\lesssim\|(u_{t},\na u,\na\varphi)\|_{H^{1}}^2+\|\chi_{\sss j}(\pa^2\na u,\pa\pa_{y_3}(\tfrac{D\diver\varphi}{Dt}),\pa \na\diver\varphi)\|_{L^2}^2\nonumber\\
&\quad+\|\na\phi\|_{L^2}^2+\de\|\na^3 (u,\varphi)\|_{L^2}^2.
\end{align}
\end{Lemma}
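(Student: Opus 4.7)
My plan is to follow the architecture already used for the half-space in Lemma \ref{le3.10'}, replacing the normal derivative $\pa_{x_3}$ by the local coordinate normal derivative $\pa_{y_3}$ (which is available thanks to the coordinate transformation of Section \ref{se6}) and combining interior and boundary pieces via the cutoffs $\chi_{\sss 0}$ and $\chi_{\sss j}$. The structure is: (Step A) derive a transport-based dissipation estimate for $\na^k\varphi$ in terms of $\na^k(u-\varphi/\mu)$ from Eq.\ $\eqref{2.8}_2$; (Step B) derive elliptic estimates for $\na^k(u-\varphi/\mu)$ via the Stokes-type Lemma \ref{le2.3} applied to the system \eqref{3.75'}; (Step C) linearly combine the two with a small but fixed parameter $\eta>0$ to absorb the $\na^k\varphi$ on the right of Step B into the left of Step A.

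For Step A, I test $\na^k(\eqref{2.8}_2)$ against $\na^k\varphi$ on the whole $\Omega$. The argument is essentially identical to \eqref{3.18'}--\eqref{3.19''} in the half-space and gives, for $k=2,3$,
\begin{align*}
\frac{d}{dt}\|\na^k\varphi\|_{L^2}^2+\|\na^k\varphi\|_{L^2}^2\lesssim \|\na^k(u-\tfrac{\varphi}{\mu})\|_{L^2}^2+\de\|\na u\|_{H^{k-1}}^2,
\end{align*}
and an analogous localized version with $\chi_{\sss j}\pa\na^2\varphi$ on the left-hand side (testing against $\chi_{\sss j}^2\pa\na^2\varphi$, paying a commutator with the cutoff that is controlled by lower-order terms already present on the right-hand side of \eqref{3.81'}). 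This step is straightforward since the transport equation has no boundary terms once $\varphi|_{\pa\Omega}=0$.

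For Step B, I apply Lemma \ref{le2.3} to the Stokes system \eqref{3.75'} to bootstrap $\|\na^2(u-\varphi/\mu)\|_{L^2}$ and $\|\na^3(u-\varphi/\mu)\|_{L^2}$. The key identity is that $\na\diver u$ is rewritten via \eqref{ncu} as $\na(\tfrac{D\diver\varphi}{Dt})$ plus a commutator $\na[(\na u)^T:\na\varphi]$ of cubic type. In the exterior case, unlike the half-space, one cannot bound $\na(\tfrac{D\diver\varphi}{Dt})$ by a single global $\pa_{x_3}$ derivative; instead, using the partition of unity, I split
\begin{align*}
\|\na(\tfrac{D\diver\varphi}{Dt})\|_{L^2}\lesssim \|\chi_{\sss 0}\na(\tfrac{D\diver\varphi}{Dt})\|_{L^2}+\sum_j\|\chi_{\sss j}\na(\tfrac{D\diver\varphi}{Dt})\|_{L^2},
\end{align*}
where the interior piece is handled via \eqref{ncu} and Lemma \ref{le3.5} (it reduces to $\chi_{\sss 0}\na\diver u$), while each boundary piece splits into tangential derivatives (controlled by $\chi_{\sss j}\pa\na u$ from Lemma \ref{le3.6}) plus the single normal-direction piece $\chi_{\sss j}\pa_{y_3}(\tfrac{D\diver\varphi}{Dt})$ supplied by Lemma \ref{le3.8}. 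The nonlinear commutators are absorbed by $\de\|\na u\|_{H^{k-1}}$ using Lemma \ref{le2.2}, exactly as in \eqref{1-30-3}--\eqref{1-30-7}. For the third estimate \eqref{3.81'}, I additionally apply $\pa$ to \eqref{3.75'}$_2$ and use Lemma \ref{le2.4} to extract $\|\chi_{\sss j}\pa\na^2(u-\varphi/\mu)\|_{L^2}$ in terms of $\|\chi_{\sss j}\pa^2\na u\|_{L^2}$ plus $\|\chi_{\sss j}\pa\pa_{y_3}(\tfrac{D\diver\varphi}{Dt})\|_{L^2}$, estimating $\|\na^2\phi\|_{L^2}$ through the Poisson equation $\eqref{2.8}_3$ and Lemma \ref{le2.4} as in \eqref{1-31-3}.

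The main obstacle I expect is not any individual estimate but the bookkeeping needed to verify that the Stokes right-hand side produced in Step B contains only terms that appear on the right of the stated inequalities \eqref{3.80}--\eqref{3.81'}: in particular, checking that $\|\chi_{\sss 0}\na^2(u,\varphi)\|_{L^2}$, $\|\chi_{\sss j}\pa\na(u,\varphi)\|_{L^2}$, and $\|\chi_{\sss j}\pa_{y_3}(\tfrac{D\diver\varphi}{Dt},\diver\varphi)\|_{L^2}$ (and their one-higher-derivative counterparts for \eqref{3.81}, and the $\chi_{\sss j}$-localized version for \eqref{3.81'}) are sufficient to close the loop. Once this bookkeeping is in place, combining Steps A and B with a small $\eta$ and choosing $\de$ small to absorb the $\de\|\na^3(u,\varphi)\|_{L^2}$ terms into the left-hand side yields \eqref{3.80}--\eqref{3.81'}.
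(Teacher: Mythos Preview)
Your proposal is correct and follows essentially the same route as the paper: the transport estimate for $\na^k\varphi$ (Step A), the Stokes estimate for $\na^k(u-\varphi/\mu)$ with the partition-of-unity splitting of $\|\na(\tfrac{D\diver\varphi}{Dt})\|_{L^2}$ into interior, tangential-boundary, and normal-boundary pieces (Step B), and the small-$\eta$ combination (Step C). The only minor deviation is that for \eqref{3.81'} the paper applies $\chi_{\sss j}\pa$ to the full Stokes system \eqref{3.75} (obtaining a localized Stokes problem with cutoff-generated commutators) and invokes Lemma~\ref{le2.3}, rather than Lemma~\ref{le2.4} on the Laplace equation alone, but both routes produce the same right-hand side.
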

\begin{proof}
First, we shall prove the higher-order dissipation estimates of $\varphi$:
\begin{align}
&\frac{d}{dt}\|\na^2\varphi\|_{L^2}^2+\|\na^2\varphi\|_{L^2}^2
\lesssim\|\na^2(u-\frac{\varphi}{\mu})\|_{L^2}^2+\de\|u\|_{H^2}^2;\label{3.18}\\
&\frac{d}{dt}\|\na^3\varphi\|_{L^2}^2+\|\na^3\varphi\|_{L^2}^2
\lesssim\|\na^3(u-\frac{\varphi}{\mu})\|_{L^2}^2+\de\|u\|_{H^3}^2;\label{3.19}\\
&\frac{d}{dt}\|\chi_{\sss j}\pa\na^2\varphi\|_{L^2}^2+\|\chi_{\sss j}\pa\na^2\varphi\|_{L^2}^2
\lesssim\|\chi_{\sss j}\pa\na^2(u-\frac{\varphi}{\mu})\|_{L^2}^2+\de\|u\|_{H^3}^2.\label{3.19'}
\end{align}
The proofs of these are similar, we only prove one of them here. In fact, applying $\pa\na^2$ to Eq. $\eqref{2.8}_{2}$, multiplying it by $\pa\na^2\varphi\chi_{\sss j}^2$ and integrating over $\Omega_{j}^{-1}$, we have for any $\epsilon>0$,
\begin{align*}
\frac{1}{2}&\frac{d}{dt}\int_{\Omega_{j}^{-1}}|\pa\na^2\varphi\chi_{\sss j}|^2 \,dy+\frac{1}{\mu}\int_{\Omega_{j}^{-1}}|\pa\na^2\varphi\chi_{\sss j}|^2 \,dy\\
&=-\int_{\Omega_{j}^{-1}}\pa\na^2(u-\frac{\varphi}{\mu})\cdot\pa\na^2\varphi \chi_{\sss j}^2 \,dy-\int_{\Omega_{j}^{-1}}\pa\na^2(u\cdot\na\varphi)\cdot\pa\na^2\varphi\chi_{\sss j}^2 \,dy\\
&\lesssim\|\chi_{\sss j}\pa\na^2(u-\frac{\varphi}{\mu})\|_{L^2}\|\chi_{\sss j}\pa\na^2\varphi\|_{L^2}+\|\na\varphi\|_{L^\infty}
\|\na^3u\|_{L^2}\|\chi_{\sss j}\pa\na^2\varphi\|_{L^2}\nonumber\\
&\quad+\|\na^2\varphi\|_{L^6}
\|\na^2u\|_{L^3}\|\chi_{\sss j}\pa\na^2\varphi\|_{L^2}+(\|u\|_{L^\infty}+\|\na u\|_{L^\infty})\|\na^3\varphi\|_{L^2} \|\chi_{\sss j}\pa\na^2\varphi\|_{L^2}\nonumber\\
&\lesssim\de(\|\chi_{\sss j}\pa\na^2\varphi\|_{L^2}^2+\|u\|_{H^3}^2)+\epsilon\|\chi_{\sss j}\pa\na^2\varphi\|_{L^2}^2+\|\chi_{\sss j}\pa\na^2(u-\frac{\varphi}{\mu})\|_{L^2}^2.
\end{align*}
Taking the above $\epsilon>0$ to be small, since $\de$ is small, we deduce \eqref{3.19'}. Similarly we can prove \eqref{3.18}--\eqref{3.19}.

Then, we shall prove the higher-order dissipation estimates of $(u-\frac{\varphi}{\mu})$:
\begin{align}\label{3.72}
\|\na^{2}(u-\frac{\varphi}{\mu})\|_{L^2}^2
&\lesssim \|(u_{t},\na\phi,u,\na u,\na \varphi,\chi_{\sss 0}\na^2u,\chi_{\sss j}\pa\na u,\chi_{\sss j}\pa_{y_3}(\tfrac{D\diver\varphi}{Dt}))\|_{L^2}^2\nonumber\\
&\quad+\|(\chi_{\sss 0}\na^2\varphi,\chi_{\sss j}\pa\na\varphi,\chi_{\sss j}\pa_{y_3}\diver\varphi)\|_{L^2}^2+\de\|\na^2(u,\varphi)\|_{L^2}^2;
\end{align}
\begin{align}\label{3.73}
&\|\na^{3}(u-\frac{\varphi}{\mu})\|_{L^2}^2
\lesssim \|(u_{t},\na\phi,u,\na u,\na\varphi)\|_{H^1}^2+\|(\chi_{\sss 0}\na^3u,\chi_{\sss j}\pa\na^2 u,\chi_{\sss j}\pa_{y_3}\na(\tfrac{D\diver\varphi}{Dt}))\|_{L^2}^2\nonumber\\
&\qquad\qquad\qquad+\|(\chi_{\sss 0}\na^3\varphi,\chi_{\sss j}\pa\na^2\varphi,\chi_{\sss j}\pa_{y_3}\na\diver\varphi)\|_{L^2}^2+\de(\|\na^3 u\|_{L^2}^2+\|\na^2\varphi\|_{H^{1}}^2);
\end{align}
\begin{align}\label{3.74'}
&\|\chi_{\sss j}\pa\na^{2}(u-\frac{\varphi}{\mu})\|_{L^2}^2\lesssim \|(u_{t},\na u,\na\varphi)\|_{H^{1}}^2+\|\na\phi\|_{L^2}^2+\|\chi_{\sss j}(\pa^2\na u,\pa\pa_{y_3}(\tfrac{D\diver\varphi}{Dt}),\pa \na\diver\varphi)\|_{L^2}^2+\de\|\na^3(u,\varphi)\|_{L^2}^2.
\end{align}
In fact, together Eq. $\eqref{2.8}_{1}$ with $\eqref{2.7}$, we have
\begin{align}\label{3.75}
\begin{cases}
\diver (u-\frac{\varphi}{\mu})=-\frac{D\diver\varphi}{Dt}-(\na u)^{T}:\na\varphi-\frac{1}{\mu}\diver \varphi,\\
-\mu\Delta(u-\frac{\varphi}{\mu})=-u_{t}-u+(\mu+\lambda) \na\diver u-\na \diver\varphi+\na\phi+R_1,\\
(u-\frac{\varphi}{\mu})\mid_{\pa\Omega}=0.
\end{cases}
\end{align}
Applying Lemma \ref{le2.3} to the boundary-value problem \eqref{3.75}, we obtain
\begin{align}\label{3.76}
\|\na^{2}(u-\frac{\varphi}{\mu})\|_{L^2}^2
&\lesssim \|(u_{t},u,\na\phi,R_1)\|_{L^2}^2+\|\tfrac{D\diver\varphi}{Dt}\|_{H^{1}}^2+\|\diver  \varphi\|_{H^{1}}^2+\|(\na u)^{T}:\na\varphi\|_{H^{1}}^2+\|(\na u,\na\varphi)\|_{L^2}^2\nonumber\\
&\lesssim\|(u_{t},u,\na\phi,\na u,\na\varphi)\|_{L^2}^2+\|\tfrac{D\diver\varphi}{Dt}\|_{H^{1}}^2
+\|\diver \varphi\|_{H^{1}}^2+\de(\|\na^2 u\|_{L^2}^2+\|\na \varphi\|_{H^{1}}^2),
\end{align}
with the fact
\begin{align*}
\|\na\diver u\|_{L^2}\le\|\na(\tfrac{D\diver\varphi}{Dt})\|_{L^2}+\|\na[(\na u)^{T}:\na\varphi]\|_{L^2}.
\end{align*}
Note that
\begin{align}\label{3.76'}
\|\tfrac{D\diver\varphi}{Dt}\|_{L^2}\le\|\diver u\|_{L^2}+\|(\na u)^{T}:\na\varphi\|_{L^2}\lesssim\|\na u\|_{L^2};
\end{align}
\begin{align}
\|\na(\tfrac{D\diver\varphi}{Dt})\|_{L^2}&\lesssim \|\chi_{\sss 0}\na(\tfrac{D\diver\varphi}{Dt})\|_{L^2}
+\|\chi_{\sss j}\pa(\tfrac{D\diver\varphi}{Dt})\|_{L^2}+\|\chi_{\sss j}\pa_{y_3}(\tfrac{D\diver\varphi}{Dt})\|_{L^2}\nonumber\\
&\lesssim\|\chi_{\sss 0}\na^2u\|_{L^2}+\|\chi_{\sss j}\pa\na u\|_{L^2}+\|\na u\|_{H^1}\|\na^2 \varphi\|_{H^1}+\|\chi_{\sss j}\pa_{y_3}(\tfrac{D\diver\varphi}{Dt})\|_{L^2};
\end{align}
\begin{align}\label{3.76''}
\|\na\diver\varphi\|_{L^2}\lesssim\|(\chi_{\sss 0}\na^2\varphi,\chi_{\sss j}\pa\na\varphi,\chi_{\sss j}\pa_{y_3}\diver\varphi)\|_{L^2}.
\end{align}
Hence, by the estimates \eqref{3.76}--\eqref{3.76''}, we can get \eqref{3.72}. Similarly, we can obtain \eqref{3.73} as follows:
\begin{align}\label{3.77}
\|\na^{3}(u-\frac{\varphi}{\mu})\|_{L^2}^2
&\lesssim \|(u_{t},u,\na\phi,R_1)\|_{H^{1}}^2+\|\tfrac{D\diver\varphi}{Dt}\|_{H^{2}}^2
+\|\diver\varphi\|_{H^{2}}^2+\|(\na u,\na\varphi)\|_{L^2}^2+\de(\|\na u\|_{H^{2}}^2+\|\na^2\varphi\|_{H^{1}}^2)\nonumber\\
&\lesssim \|(u_{t},\na\phi,u,\na u,\na\varphi)\|_{H^1}^2+\|(\chi_{\sss 0}\na^3u,\chi_{\sss j}\pa\na^2 u,\chi_{\sss j}\pa_{y_3}\na(\tfrac{D\diver\varphi}{Dt}))\|_{L^2}^2\nonumber\\
&\quad+\|(\chi_{\sss 0}\na^3\varphi,\chi_{\sss j}\pa\na^2\varphi,\chi_{\sss j}\pa_{y_3}\na\diver\varphi)\|_{L^2}^2+\de(\|\na^3 u\|_{L^2}^2+\|\na^2\varphi\|_{H^{1}}^2),
\end{align}
where we have estimated
\begin{align*}
\|\na^2\diver\varphi\|_{L^2}\lesssim\|(\chi_{\sss 0}\na^3\varphi,\chi_{\sss j}\pa\na^2\varphi,\chi_{\sss j}\pa_{y_3}\na\diver\varphi)\|_{L^2}.
\end{align*}
In order to estimate the term $\|\chi_{\sss j}\pa\na^2 u\|^2$ on the right-hand side of \eqref{3.77}, we shall take $\chi_{\sss j}\pa$ on \eqref{3.75} to obtain
\begin{align}\label{3.78}
\begin{cases}
\diver [\chi_{\sss j}\pa(u-\frac{\varphi}{\mu})]=-\chi_{\sss j}\pa(\tfrac{D\diver\varphi}{Dt})+\na\chi_{\sss j}\cdot\pa u-\chi_{\sss j}\pa[(\na u)^{T}:\na\varphi]-\frac{1}{\mu}\diver (\chi_{\sss j}\pa\varphi),\\
-\mu\Delta[\chi_{\sss j}\pa(u-\frac{\varphi}{\mu})]=-2\mu\na\chi_{\sss j}\cdot\na[\pa(u-\frac{\varphi}{\mu})]-\mu\Delta\chi_{\sss j}\pa(u-\frac{\varphi}{\mu})\\
\qquad\qquad\qquad\qquad+\chi_{\sss j}\pa [-u_{t}-u+(\mu+\lambda) \na\diver u-\na \diver\varphi+\na\phi+R_1],\\
\chi_{\sss j}\pa (u-\frac{\varphi}{\mu})\mid_{\pa\Omega_{j}^{-1}}=0.
\end{cases}
\end{align}
Then applying Lemma \ref{le2.3} to the boundary-value problem \eqref{3.78}, we obtain
\begin{align}\label{3.79}
\|\chi_{\sss j}\pa\na^{2}(u-\frac{\varphi}{\mu})\|_{L^2}^2
&\lesssim \|\chi_{\sss j}\pa(u_{t},u,\na\phi,R_1)\|_{L^2}^2+\|\chi_{\sss j}\pa (\tfrac{D\diver\varphi}{Dt})\|_{H^1}^2+\|\na\chi_{\sss j}\cdot\pa u\|_{H^1}^2+\|\chi_{\sss j}\pa[(\na u)^{T}:\na\varphi]\|_{H^1}^2\nonumber\\
&\quad+\|\pa\na(u-\frac{\varphi}{\mu})\|_{L^2}^2+\|\pa(u-\frac{\varphi}{\mu})\|_{L^2}^2+\|\diver(\chi_{\sss j}\pa \varphi)\|_{H^1}^2+\|\chi_{\sss j}\pa\na(u,\varphi)\|_{L^2}^2\nonumber\\
&\lesssim \|(u_{t},\na u,\na \varphi)\|_{H^{1}}^2+\|\na\phi\|_{L^2}^{2}+\|\chi_{\sss j}\pa^2\na u\|_{L^2}^2+\|\chi_{\sss j}\pa\pa_{y_3}(\tfrac{D\diver\varphi}{Dt})\|_{L^2}^2+\|\chi_{\sss j}\pa \na\diver\varphi\|_{L^2}^2\nonumber\\
&\quad+\de(\|\na u\|_{H^{2}}^2+\|\na^2\varphi\|_{H^{1}}^2),
\end{align}
with the facts
\begin{align*}
\|\chi_{\sss j}\pa\na(\tfrac{D\diver\varphi}{Dt})\|_{L^2}&\lesssim\|\chi_{\sss j}\pa^2(\tfrac{D\diver\varphi}{Dt})\|_{L^2}+\|\chi_{\sss j}\pa\pa_{y_3}(\tfrac{D\diver\varphi}{Dt})\|_{L^2}\nonumber\\
&\lesssim\|\chi_{\sss j}\pa^2\na u\|_{L^2}+\|\chi_{\sss j}\pa^2 [(\na u)^{T}:\na\varphi]\|_{L^2}+\|\chi_{\sss j}\pa\pa_{y_3}(\tfrac{D\diver\varphi}{Dt})\|_{L^2}
\end{align*}
and
\begin{align*}
\|\chi_{\sss j}\pa\na\phi\|_{L^2}\le\|\na^2\phi\|_{L^2}\lesssim\|\De\phi\|_{L^2}+\|\na\phi\|_{L^2}
\lesssim\|\na\varphi\|_{L^2}+\|\na\varphi\|_{L^\infty}\|\na\varphi\|_{L^2}
+\|\na\phi\|_{L^2}\lesssim\|\na\varphi\|_{L^2}+\|\na\phi\|_{L^2}.
\end{align*}
Thus we deduce \eqref{3.74'} from \eqref{3.79}.

Finally, let $\eta>0$ be a small but fixed constant. Computing $\eqref{3.18}\times\eta+\eqref{3.72}$, $\eqref{3.19}\times\eta+\eqref{3.73}$ and $\eqref{3.19'}\times\eta+\eqref{3.74'}$, respectively, we deduce \eqref{3.80}--\eqref{3.81'}.
\end{proof}

\section{Proof of Theorem \ref{th1.2}}\label{se4}

In this section, we will establish the a priori estimates and then complete the proof of Theorem \ref{th1.2}. For clarity, we use two tables below to illustrate the energy estimates for the half-space and the exterior domain with a compact boundary.

\begin{table}[htbp]
\caption{Energy Estimates in Half-spaces}
\centering
\begin{tabular}{p{35pt}p{85pt}p{120pt}p{138pt}}
\toprule
Lemma & Energy $\mathcal{E}(t)$ & Dissipation $\mathcal{D}(t)$ & Key terms in $\mathcal{B}(t)$\\
\midrule
3.1 & $u,\na\varphi,\na\phi$ & $u,\na u$ & -- \smallskip\\
3.2 & $u_t,\na\varphi_t,\na\phi_t$ & $u_t,\na u_t$ & -- \smallskip\\
3.3 & $\varphi$ & $\na(\varphi,\phi)$ & $u,\na u$ \smallskip\\
4.1 & $\pa(u,\na\varphi,\na\phi)$ & $\pa u,\pa\na(u,\varphi,\phi)$ & $\na(u,\varphi,u_t)$ \smallskip\\
 & $\pa^2(u,\na\varphi,\na\phi)$ & $\pa^2u,\pa^2\na(u,\varphi,\phi)$ & $\na(u,u_t)$ \smallskip\\
4.2 & $\pa_{x_3}\diver\varphi$ & $\pa_{x_3}(\tfrac{D\diver\varphi}{Dt},\diver\varphi)$ & $\pa\na(u,\varphi)$\smallskip\\
 & $\pa\pa_{x_3}\diver\varphi$ & $\pa\pa_{x_3}(\tfrac{D\diver\varphi}{Dt},\diver\varphi)$ & $\pa^2\na(u,\varphi)$\smallskip\\
 & $\pa_{x_3}^{2}\diver\varphi$ & $\pa_{x_3}^{2}(\tfrac{D\diver\varphi}{Dt},\diver\varphi)$ & $\pa\pa_{x_3}\na (u,\varphi)$\smallskip\\
4.3 & $\pa(u,\na u)$ & $\pa u_t$ & $\na^2\phi,\na^3\varphi$ \smallskip\\
4.4 & $\na^2\varphi$ & $\na^2(u,\varphi)$ & $\pa_{x_3}(\tfrac{D\diver\varphi}{Dt},\diver\varphi),\pa\na( u,\varphi)$ \smallskip\\

& $\pa \na^2\varphi$ & $\pa\na^2(u,\varphi)$ & $\pa(\pa_{x_3}(\tfrac{D\diver\varphi}{Dt}),  \na\diver\varphi)$ \smallskip\\

 & $\na^3\varphi$ & $\na^3(u,\varphi)$ & $\pa_{x_3}\na(\tfrac{D\diver\varphi}{Dt},\diver\varphi),\pa\na^2(u,\varphi)$ \\
\bottomrule
\end{tabular}
\end{table}

\begin{table}[htbp]
\caption{Energy Estimates in Exterior Domains}
\centering
\begin{tabular}{p{35pt}p{85pt}p{120pt}p{138pt}}
\toprule
Lemma & Energy $\mathcal{E}(t)$ & Dissipation $\mathcal{D}(t)$ & Key terms in $\mathcal{B}(t)$\\
\midrule
3.1 & $u,\na\varphi,\na\phi$ & $u,\na u$ & -- \smallskip\\
3.2 & $u_t,\na\varphi_t,\na\phi_t$ & $u_t,\na u_t$ & -- \smallskip\\
3.3 & $\varphi$ & $\na(\varphi,\phi)$ & $u,\na u$ \smallskip\\
5.1 & $\chi_{\sss 0}(\na u,\De\varphi,\De\phi)$ & $\chi_{\sss 0}\na u,\chi_{\sss 0}\na^{2}(u,\varphi,\phi)$ & $\na(u,\varphi,\phi,u_t)$ \smallskip\\
 & $\chi_{\sss 0}(\na^2u,\na\De\varphi,\na\De\phi)$ & $\chi_{\sss 0}\na^{2}u,\chi_{\sss 0}\na^{3}(u,\varphi,\phi)$ & $\na(u,u_t),\na^2(u,\varphi,\phi)$ \smallskip\\
5.2 & $\chi_{\sss j}\pa(u,\na\varphi,\na\phi)$ & $\chi_{\sss j}\pa u,\chi_{\sss j}\pa\na( u,\varphi,\phi)$ & $\na(u,\varphi,\phi,u_t)$ \smallskip\\
 & $\chi_{\sss j}\pa^2(u,\na\varphi,\na\phi)$ & $\chi_{\sss j}\pa^{2}u,\chi_{\sss j}\pa^{2}\na( u,\varphi,\phi)$ & $\na(u,u_t),\na^2(u,\varphi,\phi)$ \smallskip\\
5.3 & $\chi_{\sss j}\pa_{y_3}\diver\varphi$ & $\chi_{\sss j}\pa_{y_3}(\tfrac{D\diver\varphi}{Dt},\diver\varphi)$ & $\chi_{\sss j}\pa\na(u,\varphi)$\smallskip\\
 & $\chi_{\sss j}\pa\pa_{y_3}\diver\varphi$ & $\chi_{\sss j}\pa\pa_{y_3}(\tfrac{D\diver\varphi}{Dt},\diver\varphi)$ & $\chi_{\sss j}\pa^2\na (u,\varphi),\na^2(u,\varphi)$\smallskip\\
 & $\chi_{\sss j}\pa_{y_3}^{2}\diver\varphi$ & $\chi_{\sss j}\pa_{y_3}^{2}(\tfrac{D\diver\varphi}{Dt},\diver\varphi)$ & $\chi_{\sss j}\pa\pa_{y_3}\na (u,\varphi),\na^2(u,\varphi)$\smallskip\\
5.4 & $\chi_{\sss j}\pa(u,\na u)$ & $\chi_{\sss j}\pa u_t$ & $\na^2(u,\phi),\na^3\varphi$ \smallskip\\
5.5 & $\na^2\varphi$ & $\na^2(u,\varphi)$ & $\chi_{\sss j}\pa_{y_3}(\tfrac{D\diver\varphi}{Dt},\diver\varphi),\chi_{\sss j}\pa\na(u,\varphi)$ \smallskip\\
& $\chi_{\sss j}\pa \na^2\varphi$ & $\chi_{\sss j}\pa \na^2(u,\varphi)$ & $\chi_{\sss j}\pa(\pa_{y_3}(\tfrac{D\diver\varphi}{Dt}),\na\diver\varphi)$ \smallskip\\
& $\na^3\varphi$ & $\na^3(u,\varphi)$ & $\chi_{\sss j}\pa_{y_3}\na(\tfrac{D\diver\varphi}{Dt},\diver\varphi),\chi_{\sss j}\pa\na^2(u,\varphi)$ \\
\bottomrule
\end{tabular}
\end{table}

It is not difficult to see that the energy estimates in Sections 3-5 can be expressed as the following form
\begin{align*}
\frac{d}{dt}\mathcal{E}(t)+\mathcal{D}(t)\lesssim \mathcal{B}(t)+\de \mathcal{S}(t),\quad \de\ll1,
\end{align*}
where $\mathcal{B}(t)$ includes some bad large terms. However, we observe from the table that the bad terms $\mathcal{B}(t)$ appeared in some row can be absorbed by the dissipation $\mathcal{D}(t)$ located in other rows after multiplying them by a small constant.
Because of the equivalence in norms between $x$-domain and $y$-domain, we omit the transformation of the domains of integration without causing confusion in an exterior domain with a compact boundary.

Let $\gamma>0$ be a suitably small constant in the below, which may vary from line to line.

\textbf{Step 1:} {\it Establish the lower-order energy estimates for $(u,\na\varphi,\na\phi)$.}

Multiplying \eqref{3.17} of Lemma \ref{le3.3} by $\gamma$, and then adding it to \eqref{3.1} of Lemma \ref{le3.1}, together with \eqref{3.9} of Lemma \ref{le3.2}, since $\de$ is small, we have
\begin{align}\label{zong2}
\frac{d}{dt}&\Big[\|(u,\diver\varphi,\na\varphi,\na\phi,u_{t},\diver\varphi_{t},\na\varphi_{t}, \na\phi_{t})\|_{L^2}^2+\gamma\int_{\Omega}(\frac12|\varphi|^2-u\cdot\varphi)\,dx\Big]\nonumber\\
&+\|(u,\nabla u,\nabla\varphi,\nabla\phi,u_t,\na u_t)\|_{L^2}^2\lesssim\de\|\na^2(u,\varphi)\|_{L^2}^2.
\end{align}

\textbf{Step 2:} {\it Construct the complete energy estimates for $(u,\na\varphi,\na\phi)$ including the estimates for the higher-order derivatives.}

Here, we need to consider the half-space case and the exterior domain case separately.

\textbf{(I)} {\it The half-space case.}

Computing $\ga\times[\eqref{3.26'}+\eqref{3.27'}]+\eqref{zong2}$, since $\de$ is small, we obtain
\begin{align}\label{'zong3}
&\frac{d}{dt}\|(u,u_{t},\pa u,\pa^2u)\|_{L^2}^2+\frac{d}{dt}\|(\diver\varphi, \na\varphi,\diver \varphi_{t}, \na\varphi_{t}, \pa\diver \varphi,\pa\na\varphi,\pa^2 \diver \varphi,\pa^2 \na\varphi)\|_{L^2}^2\nonumber\\
&\quad+\frac{d}{dt}\Big[\|(\na\phi,\na\phi_{t},\pa\na\phi,\pa^2\na\phi)\|_{L^2}^2
+\gamma\int_{\Omega}(\frac12|\varphi|^2-u\cdot\varphi)\,dx\Big]\nonumber\\
&\qquad+\|(u,\na u,\na\varphi,\na\phi,u_t,\na u_t)\|_{L^2}^2+\|(\pa u,\pa^2u,\pa\na u,\pa^2\na u,\pa\na\varphi,\pa^2\na\varphi,\pa\na\phi,\pa^2\na\phi)\|_{L^2}^2\nonumber\\
&\qquad\quad\lesssim\de\|\na^2(u,\varphi)\|_{H^1}^2.
\end{align}
Computing $\ga^2\times\eqref{3.80'}+\ga\times\eqref{3.56'}+\eqref{'zong3}$, we have
\begin{align}\label{''zong3}
&\frac{d}{dt}\|(u,u_{t},\pa u,\pa^2 u)\|_{L^2}^2+\frac{d}{dt}\|(\diver \varphi, \na\varphi,\diver \varphi_{t}, \na\varphi_{t}, \pa\diver \varphi,\pa\na\varphi,\pa^2 \diver \varphi,\pa^2 \na\varphi)\|_{L^2}^2\nonumber\\
&\quad+\frac{d}{dt}\Big[\|(\na\phi,\na\phi_{t},\pa\na\phi,\pa^2\na\phi)\|_{L^2}^2
+\gamma\int_{\Omega}(\frac12|\varphi|^2-u\cdot\varphi)\,dx+\|\pa_{x_3}\diver \varphi\|_{L^2}^2+\|\na^2\varphi\|_{L^2}^2\Big]\nonumber\\
&\qquad+\|(u,\na u,\na\varphi,\na\phi,u_t,\na u_t)\|_{L^2}^2+\|(\pa u,\pa^2u,\pa\na u,\pa^2\na u,\pa\na\varphi,\pa^2\na\varphi,\pa\na\phi,\pa^2\na\phi)\|_{L^2}^2\nonumber\\
&\qquad+\|\pa_{x_3}(\tfrac{D\diver\varphi}{Dt})\|_{L^2}^2+\|\pa_{x_3}\diver\varphi\|_{L^2}^2+\|\na^2(u,\varphi)\|_{L^2}^2
\lesssim\de\|\na^3(u,\varphi)\|_{L^2}^2.
\end{align}
In order that the terms $\|\pa\pa_{x_3}\na(u,\varphi)\|_{L^2}^2$ on the right-hand side of $\eqref{3.57'}_{\kappa=0}$ can be absorbed by the left-hand side of \eqref{3.81'''}, the terms $\|(\pa^2\na u,\pa\pa_{x_3}(\tfrac{D\diver\varphi}{Dt}),\pa \na\diver\varphi)\|_{L^2}^2$ on the right-hand side of \eqref{3.81'''} can be absorbed by the left-hand side of $\eqref{3.57'}_{\kappa=1}$, the terms $\|\pa^2\na(u,\varphi)\|_{L^2}^2$ on the right-hand side of  $\eqref{3.57'}_{\kappa=1}$ can be absorbed by the left-hand side of \eqref{''zong3}, we compute $\ga^3\times\eqref{3.57'}_{\kappa=0}+\ga^2\times\eqref{3.81'''}+\ga\times\eqref{3.57'}_{\kappa=1}+\eqref{''zong3}$ to obtain
\begin{align}\label{'''zong3}
&\frac{d}{dt}\|(u,u_{t},\pa u,\pa^2 u)\|_{L^2}^2+\frac{d}{dt}\|(\diver\varphi, \na\varphi,\diver \varphi_{t}, \na\varphi_{t}, \pa\diver \varphi,\pa\na\varphi,\pa^2 \diver \varphi,\pa^2 \na\varphi)\|_{L^2}^2\nonumber\\
&+\frac{d}{dt}\Big[\|(\na\phi,\na\phi_{t},\pa\na\phi,\pa^2\na\phi)\|_{L^2}^2
+\gamma\int_{\Omega}(\frac12|\varphi|^2-u\cdot\varphi)\,dx+\|\pa_{x_3}\diver \varphi\|_{L^2}^2+\|\na^2\varphi\|_{L^2}^2\Big]\nonumber\\
&+\frac{d}{dt}\|(\pa\na^2\varphi,\pa\pa_{x_3}\diver \varphi,\pa_{x_3}^2\diver \varphi)\|_{L^2}^2\nonumber\\
&\quad+\|(u,\na u,\na\varphi,\na\phi,u_t,\na u_t)\|_{L^2}^2+\|(\pa u,\pa^2u,\pa\na u,\pa^2\na u,\pa\na\varphi,\pa^2\na\varphi,\pa\na\phi,\pa^2\na\phi)\|_{L^2}^2\nonumber\\
&\quad+\|\pa_{x_3}(\tfrac{D\diver\varphi}{Dt})\|_{L^2}^2+\|\pa_{x_3}\diver\varphi\|_{L^2}^2+\|\na^2(u,\varphi)\|_{L^2}^2
+\|\pa\na^2(u,\varphi)\|_{L^2}^2\nonumber\\
&\quad+\|(\pa\pa_{x_3}(\tfrac{D\diver\varphi}{Dt}),\pa_{x_3}^2(\tfrac{D\diver\varphi}{Dt}),
\pa\pa_{x_3}\diver \varphi,\pa_{x_3}^2\diver \varphi)\|_{L^2}^2
\lesssim\de\|\na^3(u,\varphi)\|_{L^2}^2.
\end{align}
Multiplying \eqref{3.81''} of Lemma \ref{le3.10'} by $\gamma$, and adding it to \eqref{'''zong3} so that the terms
\begin{align*}
\|(\pa\na^2u,\pa\na^2\varphi,\pa_{x_3}\na(\tfrac{D\diver\varphi}{Dt}),\pa_{x_3}\na\diver\varphi)\|_{L^2}^2
\end{align*}
on the right-hand side of \eqref{3.81''} can be absorbed by the left-hand side of \eqref{'''zong3}, thus, we have
\begin{align}\label{''''zong3}
&\frac{d}{dt}\|(u,u_{t},\pa u,\pa^2 u)\|_{L^2}^2+\frac{d}{dt}\|(\diver \varphi, \na\varphi,\diver \varphi_{t}, \na\varphi_{t}, \pa\diver \varphi,\pa\na\varphi,\pa^2 \diver \varphi,\pa^2 \na\varphi)\|_{L^2}^2\nonumber\\
&\quad+\frac{d}{dt}\Big[\|(\na\phi,\na\phi_{t},\pa\na\phi,\pa^2\na\phi)\|_{L^2}^2
+\gamma\int_{\Omega}(\frac12|\varphi|^2-u\cdot\varphi)\,dx+\|\pa_{x_3}\diver \varphi\|_{L^2}^2+\|\na^2\varphi\|_{L^2}^2\Big]\nonumber\\
&\quad+\frac{d}{dt}\|(\pa\na^2\varphi,\pa\pa_{x_3}\diver \varphi,\pa_{x_3}^2\diver \varphi,\na^3\varphi)\|_{L^2}^2\nonumber\\
&\qquad+\|(u,\na u,\na\varphi,\na\phi,u_t,\na u_t)\|_{L^2}^2+\|(\pa u,\pa^2u,\pa\na u,\pa^2\na u,\pa\na\varphi,\pa^2\na\varphi,\pa\na\phi,\pa^2\na\phi)\|_{L^2}^2\nonumber\\
&\qquad+\|\pa_{x_3}(\tfrac{D\diver\varphi}{Dt})\|_{L^2}^2+\|\pa_{x_3}\diver\varphi\|_{L^2}^2
+\|\na^2(u,\varphi)\|_{H^1}^2+\|\pa\na^2(u,\varphi)\|_{L^2}^2\nonumber\\
&\qquad+\|(\pa\pa_{x_3}(\tfrac{D\diver\varphi}{Dt}),\pa_{x_3}^2(\tfrac{D\diver\varphi}{Dt}),
\pa\pa_{x_3}\diver \varphi,\pa_{x_3}^2\diver \varphi)\|_{L^2}^2
\le0.
\end{align}
Computing $\ga\times\eqref{3.53'}+\eqref{''''zong3}$, we obtain
\begin{align}\label{'''''zong3}
&\frac{d}{dt}\|(u,u_{t},\pa u,\pa\na u,\pa^2 u,\pa\diver u)\|_{L^2}^2+\frac{d}{dt}\|(\diver\varphi, \na\varphi,\diver\varphi_{t}, \na\varphi_{t}, \pa\diver\varphi,\pa\na\varphi,\pa^2 \diver\varphi,\pa^2 \na\varphi)\|_{L^2}^2\nonumber\\
&+\frac{d}{dt}\Big[\|(\na\phi,\na\phi_{t},\pa\na\phi,\pa^2\na\phi)\|_{L^2}^2
+\gamma\int_{\Omega}(\frac12|\varphi|^2-u\cdot\varphi)\,dx+\|\pa_{x_3}\diver \varphi\|_{L^2}^2+\|\na^2\varphi\|_{L^2}^2\Big]\nonumber\\
&+\frac{d}{dt}\|(\pa\na^2\varphi,\pa\pa_{x_3}\diver\varphi,\pa_{x_3}^2\diver\varphi,\na^3\varphi)\|_{L^2}^2\nonumber\\
&\quad+\|(u,\na u,\na\varphi,\na\phi,u_t,\pa u_t,\na u_t)\|_{L^2}^2+\|(\pa u,\pa^2u,\pa\na u,\pa^2\na u,\pa\na\varphi,\pa^2\na\varphi,\pa\na\phi,\pa^2\na\phi)\|_{L^2}^2\nonumber\\
&\quad+\|\pa_{x_3}(\tfrac{D\diver\varphi}{Dt})\|_{L^2}^2+\|\pa_{x_3}\diver\varphi\|_{L^2}^2+\|\na^2(u,\varphi)\|_{H^1}^2
+\|\pa\na^2(u,\varphi)\|_{L^2}^2\nonumber\\
&\quad+\|(\pa\pa_{x_3}(\tfrac{D\diver\varphi}{Dt}),\pa_{x_3}^2(\tfrac{D\diver\varphi}{Dt}),
\pa\pa_{x_3}\diver \varphi,\pa_{x_3}^2\diver\varphi)\|_{L^2}^2
\le0.
\end{align}
We define
\begin{align*}
\mathcal{W}(t):=\|(u,u_{t},\pa\na u)(t)\|_{L^2}^2+\|(\na\varphi,\na\varphi_{t}, \na^2\varphi,\na^3\varphi)(t)\|_{L^2}^2+\|\na(\phi,\phi_{t})(t)\|_{L^2}^2.
\end{align*}
Then \eqref{'''''zong3} implies
\begin{align}\label{''''''zong3}
&\mathcal{W}(t)+\|\varphi\|_{L^2}^2
+C\int_{0}^{t}(\mathcal{W}(\tau)+\|\pa_{x_3}^2u\|_{L^2}^2+\|\na^3u\|_{L^2}^2)\,d\tau\lesssim \mathcal{W}(0)+\|\varphi_0\|_{L^2}^2.
\end{align}
Thus, we easily check that
\begin{align}\label{4.6'}
\mathcal{W}(t)\sim\|u\|_{L^2}^2+\|\pa\na u\|_{L^2}^2+\|\na\varphi\|_{H^2}^2+\|\na\phi\|_{H^2}^2+\|(u_{t},\na\varphi_{t},\na\phi_{t})\|_{L^2}^2.
\end{align}
By Eq. $\eqref{2.8}_{1}$, we easily estimate
\begin{align}\label{4.7'}
\|\pa_{x_3}^2u\|_{L^2}^2\lesssim\|(\pa\na u,u_{t},u,\na \diver \varphi,\De\varphi,\na\phi,R_1)\|_{L^2}^2\lesssim \mathcal{W}(t).
\end{align}
Combining \eqref{''''''zong3}--\eqref{4.7'} with \eqref{equivalent} and Eq. $\eqref{2.8}_3$, by Lemma \ref{le2.1}, there exists a functional $\mathcal{H}(t)$ satisfying
\begin{align*}
\mathcal{H}(t)\sim \|(\rho-1,u,\mathbb{F}-\mathbb{I})\|_{H^2}^2+\|\na\phi\|_{H^3}^2
+\|(\rho_{t},u_{t},\mathbb{F}_t,\na\phi_{t})\|_{L^2}^2
\end{align*}
such that
\begin{align*}
\mathcal{H}(t)+\|\varphi\|_{L^2}^2+C\int_{0}^{t}(\mathcal{H}(\tau)+\|\na^3u\|_{L^2}^2)\,d\tau\lesssim \mathcal{H}(0)
\lesssim \|(\rho_{0}-1, u_{0},\mathbb{F}_{0}-\mathbb{I})\|_{H^2}^2+\|\varphi_{0}\|_{L^2}^2.
\end{align*}

\textbf{(II)} {\it The exterior domain case.}

Computing $\ga\times[\eqref{3.26}+\eqref{3.27}]+\eqref{zong2}$, since $\de$ is small, we obtain
\begin{align}\label{zong3}
&\frac{d}{dt}\|(u,u_{t},\chi_{\sss 0}\na u,\chi_{\sss 0}\na^2 u)\|_{L^2}^2+\frac{d}{dt}\|(\diver \varphi, \na\varphi,\diver \varphi_{t}, \na\varphi_{t}, \chi_{\sss 0}\na\diver \varphi,\chi_{\sss 0}\De\varphi,\chi_{\sss 0}\na^2 \diver \varphi,\chi_{\sss 0}\na\De\varphi)\|_{L^2}^2\nonumber\\
&\quad+\frac{d}{dt}\Big[\|(\na\phi,\na\phi_{t},\chi_{\sss 0}\De\phi,\chi_{\sss 0}\na\De\phi)\|_{L^2}^2
+\gamma\int_{\Omega}(\frac12|\varphi|^2-u\cdot\varphi)\,dx\Big]\nonumber\\
&\qquad+\|(u,\na u,\na\varphi,\na\phi,u_t,\na u_t)\|_{L^2}^2+\|\chi_{\sss 0}(\na u,\na^2u,\na^3u,\na^2\varphi,\na^2\phi,\na^3\varphi,\na^3\phi)\|_{L^2}^2\nonumber\\
&\qquad\quad\lesssim\ga\|\na^2(u,\varphi)\|_{L^2}^2+\de\|\na^3(u,\varphi)\|_{L^2}^2.
\end{align}
Computing $\ga\times[\eqref{3.49}+\eqref{3.50}]+\eqref{zong3}$, we obtain
\begin{align}\label{zong4}
&\frac{d}{dt}\|(u,u_{t},\chi_{\sss 0}\na u,\chi_{\sss 0}\na^2 u)\|_{L^2}^2+\frac{d}{dt}\|(\diver \varphi, \na\varphi,\diver \varphi_{t}, \na\varphi_{t}, \chi_{\sss 0}\na\diver \varphi,\chi_{\sss 0}\De\varphi,\chi_{\sss 0}\na^2 \diver \varphi,\chi_{\sss 0}\na\De\varphi)\|_{L^2}^2\nonumber\\
&\quad+\frac{d}{dt}\Big[\|(\na\phi,\na\phi_{t},\chi_{\sss 0}\De\phi,\chi_{\sss 0}\na\De\phi)\|_{L^2}^2
+\gamma\int_{\Omega}(\frac12|\varphi|^2-u\cdot\varphi)\,dx\Big]\nonumber\\
&\quad+\frac{d}{dt}\|\chi_{\sss j}(\pa u,\pa^2 u,\pa \diver \varphi,\pa^2 \diver \varphi,\pa\na\varphi,\pa^2\na\varphi,\pa\na\phi,\pa^2\na\phi)\|_{L^2}^2\nonumber\\
&\qquad+\|(u,\na u,\na\varphi,\na\phi,u_t,\na u_t)\|_{L^2}^2+\|\chi_{\sss 0}(\na u,\na^2u,\na^3u,\na^2\varphi,\na^2\phi,\na^3\varphi,\na^3\phi)\|_{L^2}^2\nonumber\\
&\qquad+\|\chi_{\sss j}(\pa u,\pa\na u,\pa^2 u,\pa^2\na u,\pa\na\varphi,\pa^2\na\varphi,\pa\na\phi,\pa^2\na\phi)\|_{L^2}^2\nonumber\\
&\qquad\quad\lesssim\ga\|\na^2(u,\varphi)\|_{L^2}^2+\de\|\na^3(u,\varphi)\|_{L^2}^2.
\end{align}
Computing $\ga^{\frac{1}{2}}\times\eqref{3.80}+\ga^{\frac{1}{4}}\times\eqref{3.56}+\eqref{zong4}$, we obtain
\begin{align}\label{zong5}
&\frac{d}{dt}\|(u,u_{t},\chi_{\sss 0}\na u,\chi_{\sss 0}\na^2 u)\|_{L^2}^2+\frac{d}{dt}\|(\diver \varphi, \na\varphi,\diver \varphi_{t}, \na\varphi_{t}, \chi_{\sss 0}\na\diver \varphi,\chi_{\sss 0}\De\varphi,\chi_{\sss 0}\na^2 \diver \varphi,\chi_{\sss 0}\na\De\varphi,\na^2\varphi)\|_{L^2}^2\nonumber\\
&\quad+\frac{d}{dt}\Big[\|(\na\phi,\na\phi_{t},\chi_{\sss 0}\De\phi,\chi_{\sss 0}\na\De\phi)\|_{L^2}^2
+\gamma\int_{\Omega}(\frac12|\varphi|^2-u\cdot\varphi)\,dx\Big]\nonumber\\
&\quad+\frac{d}{dt}\|\chi_{\sss j}(\pa u,\pa^2 u,\pa \diver \varphi,\pa^2 \diver \varphi,\pa\na\varphi,\pa^2\na\varphi,\pa\na\phi,\pa^2\na\phi)\|_{L^2}^2+\frac{d}{dt}\|\chi_{\sss j}\pa_{y_3}\diver\varphi\|_{L^2}^2\nonumber\\
&\qquad+\|(u,\na u,\na\varphi,\na\phi,u_t,\na u_t)\|_{L^2}^2+\|\chi_{\sss 0}(\na u,\na^2u,\na^3u,\na^2\varphi,\na^2\phi,\na^3\varphi,\na^3\phi)\|_{L^2}^2\nonumber\\
&\qquad+\|\chi_{\sss j}(\pa u,\pa\na u,\pa^2 u,\pa^2\na u,\pa\na\varphi,\pa^2\na\varphi,\pa\na\phi,\pa^2\na\phi)\|_{L^2}^2\nonumber\\
&\qquad+\|\chi_{\sss j}\pa_{y_3}(\tfrac{D\diver\varphi}{Dt})\|_{L^2}^2
+\|\chi_{\sss j}\pa_{y_3}\diver\varphi\|_{L^2}^2+\|\na^2(u,\varphi)\|_{L^2}^2\lesssim\de\|\na^3(u,\varphi)\|_{L^2}^2.
\end{align}
In order that the terms $\|\chi_{\sss j}\pa\pa_{y_3}\na(u,\varphi)\|_{L^2}^2$ on the right-hand side of $\eqref{3.57}_{\kappa=0}$ can be absorbed by the left-hand side of \eqref{3.81'}, the terms $\|\chi_{\sss j}\pa(\pa_{y_3}(\tfrac{D\diver\varphi}{Dt}),\na\diver\varphi)\|_{L^2}^2$ on the right-hand side of \eqref{3.81'} can be absorbed by the left-hand side of $\eqref{3.57}_{\kappa=1}$, the terms $\|\chi_{\sss j}\pa^2\na(u,\varphi)\|_{L^2}^2$ on the right-hand side of $\eqref{3.57}_{\kappa=1}$ can be absorbed by the left-hand side of \eqref{zong5}, we compute $\ga^3\times\eqref{3.57}_{\kappa=0}+\ga^2\times\eqref{3.81'}+\ga\times\eqref{3.57}_{\kappa=1}+\eqref{zong5}$ to have
\begin{align}\label{zong6}
&\frac{d}{dt}\|(u,u_{t},\chi_{\sss 0}\na u,\chi_{\sss 0}\na^2 u)\|_{L^2}^2+\frac{d}{dt}\|(\diver \varphi, \na\varphi,\diver \varphi_{t}, \na\varphi_{t}, \chi_{\sss 0}\na\diver \varphi,\chi_{\sss 0}\De\varphi,\chi_{\sss 0}\na^2 \diver \varphi,\chi_{\sss 0}\na\De\varphi,\na^2\varphi)\|_{L^2}^2\nonumber\\
&\quad+\frac{d}{dt}\Big[\|(\na\phi,\na\phi_{t},\chi_{\sss 0}\De\phi,\chi_{\sss 0}\na\De\phi)\|_{L^2}^2
+\gamma\int_{\Omega}(\frac12|\varphi|^2-u\cdot\varphi)\,dx\Big]\nonumber\\
&\quad+\frac{d}{dt}\|\chi_{\sss j}(\pa u,\pa^2 u,\pa \diver \varphi,\pa^2 \diver \varphi,\pa\na\varphi,\pa^2\na\varphi,\pa\na\phi,\pa^2\na\phi)\|_{L^2}^2\nonumber\\
&\quad+\frac{d}{dt}\|\chi_{\sss j}[\pa_{y_3}\diver\varphi,\pa\pa_{y_3}\diver\varphi,\pa_{y_3}^{2}\diver\varphi,\pa\na^2\varphi]\|_{L^2}^{2}\nonumber\\
&\qquad+\|(u,\na u,\na\varphi,\na\phi,u_t,\na u_t)\|_{L^2}^2+\|\chi_{\sss 0}(\na u,\na^2u,\na^3u,\na^2\varphi,\na^2\phi,\na^3\varphi,\na^3\phi)\|_{L^2}^2\nonumber\\
&\qquad+\|\chi_{\sss j}(\pa u,\pa\na u,\pa^2 u,\pa^2\na u,\pa\na\varphi,\pa^2\na\varphi,\pa\na\phi,\pa^2\na\phi)\|_{L^2}^2\nonumber\\
&\qquad+\|\chi_{\sss j}\pa_{y_3}(\tfrac{D\diver\varphi}{Dt})\|_{L^2}^2
+\|\chi_{\sss j}\pa_{y_3}\diver\varphi\|_{L^2}^2+\|\na^2(u,\varphi)\|_{L^2}^2\nonumber\\
&\qquad+\|\chi_{\sss j}\partial\partial_{y_3}(\tfrac{D\diver\varphi}{Dt})\|_{L^2}^{2}
+\|\chi_{\sss j}\partial\partial_{y_3}\diver\varphi\|_{L^2}^{2}+\|\chi_{\sss j}\partial\nabla^{2}(u,\varphi)\|_{L^2}^2\nonumber\\
&\qquad+\|\chi_{\sss j}\partial_{y_3}^{2}(\tfrac{D\diver\varphi}{Dt})\|_{L^2}^{2}
+\|\chi_{\sss j}\partial_{y_3}^{2}\diver\varphi\|_{L^2}^{2}\lesssim\de\|\na^3(u,\varphi)\|_{L^2}^2.
\end{align}
Computing $\ga\times\eqref{3.81}+\eqref{zong6}$, since $\de$ is small, we obtain
\begin{align}\label{zong7}
&\frac{d}{dt}\|(u,u_{t},\chi_{\sss 0}\na u,\chi_{\sss 0}\na^2 u)\|_{L^2}^2+\frac{d}{dt}\|(\diver \varphi, \na\varphi,\diver \varphi_{t}, \na\varphi_{t}, \chi_{\sss 0}\na\diver \varphi,\chi_{\sss 0}\De\varphi,\chi_{\sss 0}\na^2 \diver \varphi,\chi_{\sss 0}\na\De\varphi,\na^2\varphi,\na^3\varphi)\|_{L^2}^2\nonumber\\
&\quad+\frac{d}{dt}\Big[\|(\na\phi,\na\phi_{t},\chi_{\sss 0}\De\phi,\chi_{\sss 0}\na\De\phi)\|_{L^2}^2
+\gamma\int_{\Omega}(\frac12|\varphi|^2-u\cdot\varphi)\,dx\Big]\nonumber\\
&\quad+\frac{d}{dt}\|\chi_{\sss j}(\pa u,\pa^2 u,\pa \diver \varphi,\pa^2 \diver \varphi,\pa\na\varphi,\pa^2\na\varphi,\pa\na\phi,\pa^2\na\phi)\|_{L^2}^2\nonumber\\
&\quad+\frac{d}{dt}\|\chi_{\sss j}[\pa_{y_3}\diver\varphi,\pa\pa_{y_3}\diver\varphi,\pa_{y_3}^{2}\diver\varphi,\pa\na^2\varphi]\|_{L^2}^{2}\nonumber\\
&\qquad+\|(u,\na u,\na\varphi,\na\phi,u_t,\na u_t)\|_{L^2}^2+\|\chi_{\sss 0}(\na u,\na^2u,\na^3u,\na^2\varphi,\na^2\phi,\na^3\varphi,\na^3\phi)\|_{L^2}^2\nonumber\\
&\qquad+\|\chi_{\sss j}(\pa u,\pa\na u,\pa^2 u,\pa^2\na u,\pa\na\varphi,\pa^2\na\varphi,\pa\na\phi,\pa^2\na\phi)\|_{L^2}^2\nonumber\\
&\qquad+\|\chi_{\sss j}\pa_{y_3}(\tfrac{D\diver\varphi}{Dt})\|_{L^2}^2
+\|\chi_{\sss j}\pa_{y_3}\diver\varphi\|_{L^2}^2+\|\na^2(u,\varphi)\|_{L^2}^2+\|\na^3(u,\varphi)\|_{L^2}^2\nonumber\\
&\qquad+\|\chi_{\sss j}\partial\partial_{y_3}(\tfrac{D\diver\varphi}{Dt})\|_{L^2}^{2}
+\|\chi_{\sss j}\partial\partial_{y_3}\diver\varphi\|_{L^2}^{2}+\|\chi_{\sss j}\partial\nabla^{2}(u,\varphi)\|_{L^2}^2\nonumber\\
&\qquad+\|\chi_{\sss j}\partial_{y_3}^{2}(\tfrac{D\diver\varphi}{Dt})\|_{L^2}^{2}
+\|\chi_{\sss j}\partial_{y_3}^{2}\diver\varphi\|_{L^2}^{2}\le0.
\end{align}
Computing $\ga\times\eqref{3.53}+\eqref{zong7}$, we get
\begin{align}\label{zong8}
\frac{d}{dt}&\|(u,u_{t})\|_{L^2}^2+\frac{d}{dt}\|\chi_{\sss 0}(\na u,\na^2 u)\|_{L^2}^2+\frac{d}{dt}\|\chi_{\sss j}(\partial u,\partial^2 u,\partial\na u,\partial\diver u)\|_{L^2}^2\nonumber\\
&+\frac{d}{dt}\|(\diver\varphi, \na\varphi,\na^2\varphi,\na^3\varphi, \diver\varphi_{t}, \na\varphi_{t})\|_{L^2}^2+\frac{d}{dt}\|\chi_{\sss 0}(\na\diver \varphi,\De\varphi,\nabla^2 \diver \varphi,\nabla\De\varphi)\|_{L^2}^2\nonumber\\
&+\frac{d}{dt}\|\chi_{\sss j}[\pa_{y_3}\diver\varphi,\pa\diver\varphi,\pa\na\varphi,\pa^2 \diver\varphi,\pa^2\na\varphi,\pa\na^2\varphi,\pa\pa_{y_3}\diver\varphi,\pa_{y_3}^{2}\diver\varphi]\|_{L^2}^2\nonumber\\
&+\frac{d}{dt}\|(\nabla\phi,\nabla\phi_{t})\|_{L^2}^2+\frac{d}{dt}\|\chi_{\sss 0}(\De\phi,\nabla\De\phi)\|_{L^2}^2+\frac{d}{dt}\|\chi_{\sss j}(\partial\na\phi,\partial^2\na\phi)\|_{L^2}^2
+\frac{d}{dt}\Big[\gamma\int_{\Omega}(\frac12|\varphi|^2-u\cdot\varphi)\,dx\Big]\nonumber\\
&\quad+\|(u,\na u,\na^2u,\na^3u,u_t,\na u_t)\|_{L^2}^2+\|\chi_{\sss 0}(\na^2u,\na^3u)\|_{L^2}^2+\|\chi_{\sss j}(\pa u,\pa\na u,\pa^2u,\pa^2\na u,\partial\nabla^{2}u,\partial u_{t})\|_{L^2}^2\nonumber\\
&\quad+\|(\nabla\varphi,\nabla^2\varphi,\nabla^3\varphi)\|_{L^2}^2+\|\chi_{\sss 0}(\na^2\varphi,\nabla^3\varphi)\|_{L^2}^2\nonumber\\
&\quad+\|\chi_{\sss j}(\pa\nabla\varphi,\pa^2\nabla\varphi,\pa\nabla^2\varphi,\pa_{y_3}\diver\varphi,
\partial\partial_{y_3}\diver\varphi,\partial_{y_3}^{2}\diver\varphi)\|_{L^2}^2\nonumber\\
&\quad+\|\nabla\phi\|_{L^2}^2+\|\chi_{\sss 0}(\na^2\phi,\nabla^3\phi)\|_{L^2}^2+\|\chi_{\sss j}(\partial\nabla\phi,\partial^2\nabla\phi)\|_{L^2}^2\nonumber\\
&\quad+\|\chi_{\sss j}[\pa_{y_3}(\tfrac{D\diver\varphi}{Dt}),\partial\partial_{y_3}(\tfrac{D\diver\varphi}{Dt}),
\partial_{y_3}^{2}(\tfrac{D\diver\varphi}{Dt})]\|_{L^2}^{2}
\le0.
\end{align}
We define
\begin{align*}
\mathcal{Y}(t):=\|(u,u_{t},\chi_{\sss j}\pa\na u,\chi_{\sss 0}\na^2u)(t)\|_{L^2}^2+\|(\na\varphi,\na\varphi_{t}, \na^2\varphi,\na^3\varphi)(t)\|_{L^2}^2+\|\na(\phi,\phi_{t})(t)\|_{L^2}^2.
\end{align*}
Then \eqref{zong8} implies
\begin{align}\label{zong9}
&\mathcal{Y}(t)+\|\varphi\|_{L^2}^2+C\int_{0}^{t}(\mathcal{Y}(\tau)+\|\chi_{\sss j}\pa_{y_{3}}^2u\|_{L^2}^2+\|\na^3u\|_{L^2}^2)\,d\tau\lesssim \mathcal{Y}(0)+\|\varphi_0\|_{L^2}^2.
\end{align}
Next, we easily check that
\begin{align}\label{4.6}
\mathcal{Y}(t)\sim\|u\|_{L^2}^2+\|\chi_{\sss 0}\na^2u\|_{L^2}^2+\|\chi_{\sss j}\pa\na u\|_{L^2}^2+\|\na\varphi\|_{H^2}^2+\|\na\phi\|_{H^2}^2+\|(u_{t},\na\varphi_{t},\na\phi_{t})\|_{L^2}^2.
\end{align}
By Eq. $\eqref{2.8}_{1}$, we easily estimate
\begin{align}\label{4.7}
\|\chi_{\sss j}\pa_{y_{3}}^2u\|_{L^2}^2\lesssim\|(\chi_{\sss j}\pa\na u,\chi_{\sss 0}\na^2u,u_{t},u,\na \diver \varphi,\De\varphi,\na\phi,R_1)\|_{L^2}^2\lesssim \mathcal{Y}(t).
\end{align}
Combining \eqref{zong9}--\eqref{4.7} with \eqref{equivalent} and Eq. $\eqref{2.8}_3$, by Lemma \ref{le2.1}, there exists a functional $\mathcal{H}(t)$ satisfying
\begin{align*}
\mathcal{H}(t)\sim \|(\rho-1,u,\mathbb{F}-\mathbb{I})\|_{H^2}^2+\|\na\phi\|_{H^2}^2
+\|(\rho_{t},u_{t},\mathbb{F}_t,\na\phi_{t})\|_{L^2}^2
\end{align*}
such that
\begin{align*}
\mathcal{H}(t)+\|\varphi\|_{L^2}^2+C\int_{0}^{t}(\mathcal{H}(\tau)+\|\na^3u\|_{L^2}^2)\,d\tau\lesssim \mathcal{H}(0)
\lesssim \|(\rho_{0}-1, u_{0},\mathbb{F}_{0}-\mathbb{I})\|_{H^2}^2+\|\varphi_{0}\|_{L^2}^2.
\end{align*}
\textbf{Step 3:} From the above two steps, we have proved the following a priori estimates:
\begin{Proposition}[A priori estimates]\label{apriori}
Let $\Omega\subset\r3$ be a half-space or an exterior domain with a compact boundary $\pa\Omega\in\mathcal{C}^3$. Let $T>0$. Assume that for sufficiently small $\de>0$,
\begin{align*}
\sup_{0\le t\le T}\big[\|(\rho-1, u, \mathbb{F}-\mathbb{I})(t)\|_{H^2}+\|\De^{-1}\diver \mathbb{F}^{-1}(t)\|_{L^2}\big]<\de.
\end{align*}
Then we have for any $t\in[0,T]$,
\begin{align*}
&\|(\rho-1, u, \mathbb{F}-\mathbb{I})(t)\|_{H^2}+ \|\na\phi(t)\|_{H^3}+\|(\rho_{t},u_{t},\mathbb{F}_t,\na\phi_t)(t)\|_{L^2}
+\|\varphi(t)\|_{L^2}\\
&\quad\le C_1(\|(\rho_{0}-1, u_{0}, \mathbb{F}_{0}-\mathbb{I})\|_{H^2}+\|\varphi_{0}\|_{L^2})
\end{align*}
if $\rho_+=1$ and
\begin{align*}
&\|(\rho-1, u, \mathbb{F}-\mathbb{I})(t)\|_{H^2}+\|\phi(t)\|_{H^4}
+\|(\rho_{t},u_{t},\mathbb{F}_t,\phi_t,\na\phi_t)(t)\|_{L^2}+\|\varphi(t)\|_{L^2}\\
&\quad\le C_1 (\|(\rho_{0}-1, u_{0}, \mathbb{F}_{0}-\mathbb{I})\|_{H^2}+\|\varphi_{0}\|_{L^2})
\end{align*}
if $\rho_{+}=e^{-\phi}$. Here $C_1>1$ is some fixed constant.
\end{Proposition}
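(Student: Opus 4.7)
The plan is to assemble the a priori estimates for Proposition \ref{apriori} by combining the energy inequalities of Lemmas \ref{le3.1}--\ref{le3.3}, \ref{le3.5'}--\ref{le3.10'} (half-space case) or \ref{le3.5}--\ref{le3.10} (exterior case), with a cascade of small weighting constants $\gamma\ll1$ so that the bad forcing terms $\mathcal{B}(t)$ on the right-hand side of each inequality are absorbed by dissipation already present on the left-hand side of another inequality. Throughout, I work with the reformulated system \eqref{2.8}--\eqref{2.8'} in the variables $(u,\varphi,\phi)$ under the a priori smallness assumption \eqref{equivalent'}, which is equivalent via \eqref{equivalent} to the stated smallness of $(\rho-1,u,\mathbb{F}-\mathbb{I})$ in $H^2$ together with $\|\varphi\|_{L^2}<\delta$. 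After producing a closed differential inequality, I convert back to $(\rho,u,\mathbb{F},\nabla\phi)$ using \eqref{equivalent} and the Poisson equation $\eqref{2.8}_3$.

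First, I would build the lower-order block by multiplying the dissipation estimate \eqref{3.17} for $\nabla(\varphi,\phi)$ by a small $\gamma$ and adding it to \eqref{3.1} and \eqref{3.9}. The cross term $-\gamma\int u\cdot\varphi\,dx$ produced in \eqref{3.17} is controlled by the $u$-energy, and since $\delta$ is small, the right-hand side $\delta\|\nabla^2(u,\varphi)\|_{L^2}^2$ is postponed for absorption by the higher-order estimates. The result is the inequality \eqref{zong2}, which already provides dissipation for $(u,\nabla u,\nabla\varphi,\nabla\phi,u_t,\nabla u_t)$.

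Next, I would carry out the higher-order cascade separately for the half-space and the exterior case, but with the same logic. For the half-space, combining the tangential estimates \eqref{3.26'}--\eqref{3.27'} with \eqref{zong2} gives \eqref{'zong3}; then \eqref{3.80'} and \eqref{3.56'} contribute the full $\nabla^2$-dissipation yielding \eqref{''zong3}; then $\eqref{3.57'}_{\kappa=0}$, \eqref{3.81'''}, $\eqref{3.57'}_{\kappa=1}$ are stacked with weights $\gamma^3,\gamma^2,\gamma$ respectively so that the bad top-order normal derivatives in one line are eaten by the dissipation in the next, producing \eqref{'''zong3}; adding \eqref{3.81''} and \eqref{3.53'} yields \eqref{'''''zong3}. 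For the exterior case, one introduces a partition of unity $\{\chi_0,\chi_j\}$ (interior plus boundary charts), runs the same cascade through \eqref{zong3}--\eqref{zong8}, and uses the momentum equation once more (estimate \eqref{4.7}) to recover $\|\chi_j\partial_{y_3}^2 u\|_{L^2}$ from the already-controlled quantities.

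The main obstacle, and the only real technical point, is choosing the small parameters in the right order so that the cascade closes: each line of Tables 3--4 produces a bad term $\mathcal{B}(t)$ that must be absorbed by the dissipation of an earlier line multiplied by a bigger power of $\gamma$, while the $\delta$-terms on the very last line are dominated by the top-order dissipation that has just been produced. Once the cascade is closed I obtain $\frac{d}{dt}\mathcal{W}(t)+c\,\mathcal{W}(t)+c\|\nabla^3 u\|_{L^2}^2\le 0$ (half-space) or the analogous bound with $\mathcal{Y}(t)$ (exterior), together with the transport estimate $\frac{d}{dt}\|\varphi\|_{L^2}^2\lesssim \mathcal{W}(t)+\delta\|\nabla\varphi\|_{L^2}^2$ coming from $\varphi_t=-u-u\cdot\nabla\varphi$. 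Integrating in time and invoking \eqref{equivalent}, \eqref{4.6'}/\eqref{4.6}, \eqref{4.7'}/\eqref{4.7} and Lemma \ref{le2.4} applied to $\eqref{2.8}_3$ (or \eqref{phi-20220706} in the Boltzmann case, which supplies the extra $\|\phi\|_{L^2}$-control needed in (ii)) delivers the two inequalities claimed in Proposition \ref{apriori} with a fixed constant $C_1>1$ depending only on $\mu,\lambda,\alpha,P,\Omega$.
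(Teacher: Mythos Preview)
Your proposal is correct and follows essentially the same approach as the paper: the lower-order block \eqref{zong2} is built from Lemmas \ref{le3.1}--\ref{le3.3}, then the higher-order cascade is run exactly as you describe (with the weights $\gamma^3,\gamma^2,\gamma$ on $\eqref{3.57'}_{\kappa=0}$, \eqref{3.81'''}, $\eqref{3.57'}_{\kappa=1}$ in the half-space, and the partition-of-unity analogue in the exterior case), and the missing normal piece $\|\partial_{x_3}^2 u\|_{L^2}$ (resp.\ $\|\chi_j\partial_{y_3}^2 u\|_{L^2}$) is recovered from the momentum equation via \eqref{4.7'} (resp.\ \eqref{4.7}). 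The only minor deviation is your separate ``transport estimate for $\|\varphi\|_{L^2}^2$'': in the paper this quantity is already carried inside the energy functional through the term $\gamma\int_\Omega(\tfrac12|\varphi|^2-u\cdot\varphi)\,dx$ coming from Lemma \ref{le3.3}, so no additional estimate is needed.
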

\textbf{Step 4:} For completeness, we list the local existence and uniqueness of the strong solution of the problem \eqref{1.1}--\eqref{1.2} and omit its proof, cf. \cite{Kagei-Kawashima2006}.

\begin{Proposition}\label{th1.1}
Let $\Omega\subset\r3$ be a half-space or an exterior domain with a compact boundary $\pa\Omega\in\mathcal{C}^3$. Assume that $(\rho_{0}-1, u_{0}, \mathbb{F}_{0}-\mathbb{I})\in H^2(\Omega)$ and $\varphi_0:=\De^{-1}\diver\mathbb{F}_0^{-1}\in L^2(\Omega)$ satisfying $\inf_{x\in\overline{\Omega}}\{\rho_0(x)\}>0$ and the compatible conditions given in \eqref{compatible}.

(i) Then the problem \eqref{1.1}--\eqref{1.2} with $\rho_{+}=1$ admits a unique local solution $(\rho, u, \mathbb{F}, \na\phi)$ satisfying for some $T>0$,
\begin{align}\label{regularity-1-local}
\begin{cases}
\rho\in \mathcal{C}([0,T];H^2(\Omega)),\quad \rho_{t}\in \mathcal{C}([0,T];H^1(\Omega)),\\
u\in \mathcal{C}([0,T];H^2(\Omega)\cap H^1_0(\Omega))\cap L^2([0,T];H^3(\Omega)),\\
u_{t}\in \mathcal{C}([0,T];L^2(\Omega))\cap L^2([0,T];H^1_0(\Omega)),\\
\mathbb{F}\in \mathcal{C}([0,T];H^2(\Omega)),\quad \mathbb{F}_{t}\in \mathcal{C}([0,T];H^1(\Omega)),\\
\displaystyle\inf_{(x,t)\in\overline{\Omega}\times[0,T]}\rho(x,t)\ge\frac{1}{2}+\frac{1}{2}
\inf_{x\in\overline{\Omega}}\{\rho_0(x)\}>0
\end{cases}
\end{align}
and
\begin{align*}
\na\phi\in \mathcal{C}([0,T];H^3(\Omega)),\quad \na\phi_{t}\in \mathcal{C}([0,T];H^2(\Omega)).
\end{align*}
Moreover, it holds that
\begin{align*}
\sup_{0\le t\le T}\big(\|(\rho-1, u, \mathbb{F}-\mathbb{I})(t)\|_{H^2}+\|\na\phi(t)\|_{H^3}
+\|\varphi(t)\|_{L^2}\big)\le C_2\big(\|(\rho_0-1, u_0, \mathbb{F}_0-\mathbb{I})\|_{H^2}+\|\varphi_0\|_{L^2}\big).
\end{align*}

(ii) Then the problem \eqref{1.1}--\eqref{1.2} with $\rho_{+}=e^{-\phi}$ admits a unique global solution $(\rho, u, \mathbb{F}, \phi)$ satisfying \eqref{regularity-1-local}
and
\begin{align*}
\phi\in \mathcal{C}([0,T];H^4(\Omega)),\quad \phi_{t}\in \mathcal{C}([0,T];H^3(\Omega)).
\end{align*}
Moreover, it holds that
\begin{align*}
\sup_{0\le t\le T}\big(\|(\rho-1, u, \mathbb{F}-\mathbb{I})(t)\|_{H^2}+\|\phi(t)\|_{H^4}
+\|\varphi(t)\|_{L^2}\big)\le C_2\big(\|(\rho_0-1, u_0, \mathbb{F}_0-\mathbb{I})\|_{H^2}+\|\varphi_0\|_{L^2}\big).
\end{align*}

Here the above $C_2>1$ is some fixed constant.
\end{Proposition}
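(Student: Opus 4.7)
The plan is to establish Proposition \ref{th1.1} by a classical linearization plus iteration scheme adapted to the exterior/half-space geometry, following the general strategy of \cite{Kagei-Kawashima2006} but taking care that no Poincaré-type inequality for $u$ is invoked. First I would decouple the system as follows: given the $n$-th iterate $(\rho^n,u^n,\mathbb{F}^n,\phi^n)$, I would define $(\rho^{n+1},u^{n+1},\mathbb{F}^{n+1},\phi^{n+1})$ by solving, in sequence, a linear transport equation for $\rho^{n+1}$ with advective field $u^n$, a linear transport equation for $\mathbb{F}^{n+1}$ with the same field and source $\nabla u^n\mathbb{F}^{n+1}$, a linear parabolic system for $u^{n+1}$ with coefficients frozen at level $n$ (incorporating the friction $\alpha\rho^n u^{n+1}$, the elasticity source $c^2\diver(\rho^n\mathbb{F}^n(\mathbb{F}^n)^T)$, and the Lorentz term $\rho^n\nabla\phi^n$) subject to the no-slip condition, and finally an elliptic problem $\Delta\phi^{n+1}=\rho^{n+1}-\rho_+$ subject to either the Dirichlet or Neumann condition, where Lemma \ref{le2.4} gives $\nabla\phi^{n+1}\in H^3$.

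Next, I would construct a closed ball in the space
\[
\mathcal{X}_T:=\bigl\{(\rho-1,u,\mathbb{F}-\mathbb{I},\nabla\phi):\ \rho-1,\mathbb{F}-\mathbb{I}\in\mathcal{C}([0,T];H^2),\ u\in\mathcal{C}([0,T];H^2)\cap L^2([0,T];H^3)\bigr\}
\]
of some radius $R$ comparable to $C_2(\|(\rho_0-1,u_0,\mathbb{F}_0-\mathbb{I})\|_{H^2}+\|\varphi_0\|_{L^2})$, and show the map $(\rho^n,u^n,\mathbb{F}^n,\phi^n)\mapsto(\rho^{n+1},u^{n+1},\mathbb{F}^{n+1},\phi^{n+1})$ leaves this ball invariant on a possibly smaller interval $[0,T]$. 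The a priori estimates needed here are the standard transport-equation $H^2$ estimates for $\rho^{n+1},\mathbb{F}^{n+1}$ (which at this stage inevitably contain a factor $\exp(C\int_0^t\|\nabla u^n\|_{H^2}\,d\tau)$, hence the need to choose $T$ small), the parabolic regularity estimate for $u^{n+1}$ supplied by Lemma \ref{le2.3} after using $\diver(\rho^n u^{n+1})$ as an auxiliary divergence source, and the elliptic estimate for $\nabla\phi^{n+1}$ from Lemma \ref{le2.4}. The lower bound $\rho^{n+1}\ge\tfrac12\inf\rho_0$ propagates from the transport structure once $T$ is chosen small enough that $\int_0^T\|\diver u^n\|_{L^\infty}\,d\tau\ll 1$.

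Then I would prove contraction of the iteration in the weaker norm $\mathcal{C}([0,T];L^2)\times\mathcal{C}([0,T];H^1)\times\mathcal{C}([0,T];L^2)\times\mathcal{C}([0,T];H^1)$ for the differences $(\delta\rho^{n+1},\delta u^{n+1},\delta\mathbb{F}^{n+1},\delta\nabla\phi^{n+1}):=$ iterate$^{n+1}$ minus iterate$^n$. This drops two derivatives below the uniform-boundedness level, which is the usual gain needed to close the contraction despite the hyperbolic components $\rho$ and $\mathbb{F}$. Passing to the limit then yields a strong solution with the claimed regularity, and $\varphi=\Delta^{-1}\diver\mathbb{F}^{-1}$ inherits $\varphi\in\mathcal{C}([0,T];L^2)$ from \eqref{'2.4} and the $H^2$ bound on $\mathbb{F}-\mathbb{I}$. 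Uniqueness follows from a direct $L^2$ energy estimate applied to the difference of two solutions, combining the transport-type estimate for the density/deformation differences with the parabolic estimate for the velocity difference and Lemma \ref{le2.4} for the potential difference. The compatibility conditions in \eqref{compatible} are preserved in time by Lemma \ref{A.3}.

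The main obstacle I expect is handling the Poisson coupling in the unbounded setting: the source $\rho^{n+1}-1$ does not carry any zero-mean constraint a priori (relevant for the Neumann case), and the term $\rho^n\nabla\phi^n$ in the momentum equation must be controlled in $H^2$ together with $u^{n+1}$ without loss of regularity on $\phi^n$. The cure is to use the full elliptic gain from Lemma \ref{le2.4}, recover $\nabla\phi\in H^3$ from $\rho\in H^2$, and then treat $\rho\nabla\phi$ as a lower-order perturbation of the parabolic system. For $\rho_+=e^{-\phi}$ the Poisson equation becomes the semilinear $\Delta\phi-\phi=\rho-1+O(\phi^2)$ as in \eqref{phi-20220706}, which is solved by the same Lax--Milgram/elliptic-regularity argument with the extra dissipative term $-\phi$ improving matters, and the small-data assumption absorbs the quadratic remainder.
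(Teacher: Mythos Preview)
Your proposal is in line with the standard linearization/iteration scheme of \cite{Kagei-Kawashima2006}, which is precisely what the paper appeals to: in fact the paper does not supply any proof of Proposition~\ref{th1.1} at all, stating only ``For completeness, we list the local existence and uniqueness of the strong solution of the problem \eqref{1.1}--\eqref{1.2} and omit its proof, cf.~\cite{Kagei-Kawashima2006}.'' Your outline is therefore more detailed than what the paper itself provides, and follows the same intended route.
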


Then the local solution given in Proposition \ref{th1.1} can be extended to the global one by combining the a priori estimates given in Proposition \ref{apriori} with a standard continuous argument, cf. \cite{Matsumura-Nishida1980}. Therefore, we complete the proof of Theorem \ref{th1.2}.

\appendix

\section{Derivation of Models}\label{appendix}

We consider the viscoelastic two-fluids: one is made up of negatively charged particles (called negative fluids for convenience) and the other is made up of positively charged particles (called positive fluids). We want to propose a PDE model to describe the dynamics of this kind of charged two-fluids under the self-consistent electrostatic field on a unbounded domain corresponding to the unbounded problems considered in this paper. We will adopt the so-called energetic variational approach developed in \cite{Forster2013,Giga-Kirshtein-Liu2018}, which is proved to be effectively to derive mathematical models for complex fluid dynamics.

The energetic variational approach is mainly based on energy dissipation laws, Maximum Dissipation Principle, Least Action Principle, and force balance laws. In order to apply the energetic variational approach, we need to calculate the variation of the action functional on the flow map and the variation of the dissipation functional on the velocity. For this reason, we first introduce the flow map $x_\pm(X,t)$ as follows.

For a given velocity field $u_\pm(x_\pm(X,t),t)$, the flow map $x_\pm(X,t)$ is determined by the following initial value problem:
\begin{align}\label{0-20220629}
\begin{cases}
\displaystyle\frac{d}{dt}x_\pm(X_\pm,t)=u_\pm(x_\pm(X,t),t),\quad t>0,\\
x_\pm(X_\pm,0)=X,
\end{cases}
\end{align}
where $X$, $x_\pm\in\Omega\subseteq\r3$ denote the Lagrangian coordinate and Eulerian coordinate of the particle, respectively. Here the subscripts $-$ and $+$ represent the negative and positive particles, respectively.
Note that we first have at hand two mass conservation equations
\begin{align}\label{1-20220629}
\begin{cases}
\pa_t\rho_{-} +\diver (\rho_{-} u_{-})=0,\\
\pa_t\rho_{+} +\diver (\rho_{+} u_{+})=0
\end{cases}
\end{align}
and the Poisson equation by Gauss' law
\begin{align}\label{2-20220629}
\De \phi=\rho_{-}-\rho_{+}.
\end{align}

For viscoelastic electrically conducting fluids occupying unbounded domains $\Omega$, the total energy should contain the kinetic energy and the Helmholtz free energy. So, we start with the following energy dissipation law:
\begin{align}\label{3-20220629}
\frac{d}{dt}E^{\mbox{total}}=-\triangle,
\end{align}
where
\begin{align*}
E^{\mbox{total}}:=\int_{\Omega}\Big[\frac{1}{2}\rho_{+}|u_{+}|^2+\frac12\rho_{-}|u_{-}|^2+\omega_{+}(\rho_{+})+\omega_{-}(\rho_{-})+\frac 12c_{+}^2\rho_{+}|\mathbb{F}_{+}|^2+\frac 12c_{-}^2\rho_{-}|\mathbb{F}_{-}|^2+\frac12|\na\phi|^2\Big]\,dx
\end{align*}
and
\begin{align*}
\triangle:=\int_{\Omega}\Big[\mu_{+}|\na u_{+}|^2+(\mu_{+}+\la_{+})|\diver u_{+}|^2+\al_{+}\rho_{+}|u_{+}|^2+\mu_{-}|\na u_{-}|^2+(\mu_{-}+\la_{-})|\diver u_{-}|^2+\al_{-}\rho_{-}|u_{-}|^2\Big]\,dx.
\end{align*}
The symbols $E^{\mbox{total}}$ and $\triangle$ denote the total energy and the entropy production, respectively. Note that the entropy production is generated by fluid viscosity and skin friction with the fixed boundary $\pa\Omega$, where the terms $\al_{\pm}\rho_{\pm}|u_{\pm}|^2=\al_{\pm}\rho_{\pm}|u_{\pm}-0|^2$ are well understood as the friction induced by the relative motion with the motionless physical boundary $\pa\Omega$.

By the Maximum Dissipation Principle \cite{Forster2013}, taking the variation (for any smooth $\tilde{u}_{-}$ with compact support) with respect to $u_{-}$ yields
\begin{align*}
0&=\left.\frac{d}{d\va}\right|_{\va=0}\frac12\triangle(u_{-}+\va\tilde{u}_{-})\nonumber\\
&=\left.\frac{d}{d\va}\right|_{\va=0}\frac12\int_{\Omega}\Big[\mu_{-}|\na u_{-}+\va\na  \tilde{u}_{-}|^2+(\mu_{-}+\la_{-})|\diver u_{-}+\va\diver \tilde{u}_{-}|^2+\al_{-}\rho_{-}|u_{-}+\va\tilde{u}_{-}|^2\Big]\,dx\nonumber\\
&=\int_{\Omega}[\mu_{-}\na u_{-}:\na\tilde{u}_{-}+(\mu_{-}+\la_{-})\diver u_{-}\diver\tilde{u}+\al_{-}\rho_{-} u_{-}\cdot\tilde{u}_{-}]\,dx\nonumber\\
&=\int_{\Omega}[-\mu_{-}\De u_{-}-(\mu_{-}+\la_{-})\na\diver u_{-}+\al_{-}\rho_{-} u_{-}]\cdot \tilde{u}_{-}\,dx.
\end{align*}
Since $ \tilde{u}_{-}$ is arbitrary, we obtain
\begin{align}\label{4-20220629}
-\mu\De u_{-}-(\mu_{-}+\la_{-})\na\diver u_{-}+\al_{-}\rho_{-} u_{-}&=0,\nonumber\\
\mbox{and}\ F^{-}_{dissipative}&=\frac{\de(\tfrac12\triangle)}{\de u_{-}}=-\mu_{-}\De u_{-}-(\mu_{-}+\la_{-})\na\diver u_{-}+\al_{-}\rho_{-} u_{-},
\end{align}
where $F^-_{dissipative}$ denotes the dissipative force for negative fluids.

Now, we need to find the conservative force from the total energy $E^{\mbox{total}}$. Solving the Poisson equation \eqref{2-20220629} in $\Omega$, we obtain
\begin{align}\label{5-20220629}
\phi(x)=\int_{\Omega}G(x-y)(\rho_{+}-\rho_{-})(y)\,dy,
\end{align}
where $G(\cdot)$ is the Green's kernel. So, by integrating by parts and \eqref{5-20220629}, we have
\begin{align}\label{6-20220629}
\int_{\Omega}\frac12|\na\phi|^2\,dx=\frac12\int_{\Omega}(\rho_{-}-\rho_{+})(x)\int_{\Omega}G(x-y)(\rho_{-}-\rho_{+})(y)\,dydx.
\end{align}
Note that the relation between Eulerian coordinates $x_{-}$ and Lagrangian coordinates $X$, see \cite{Forster2013}, it holds that
\begin{align}\label{7-20220629}
\rho_{-}(x_{-}(X,t),t)=\frac{\rho_{-0}(X)}{\det(\widetilde{\mathbb{F}}_{-})}\quad \mbox{and}\quad \widetilde{\mathbb{F}}_{-}(X,t)=\mathbb{F}_{-}(x_{-}(X,t),t)=\frac{\pa x_{-}(X,t)}{\pa X}.
\end{align}
Given the total energy $E^{\mbox{total}}$ in \eqref{3-20220629}, by \eqref{6-20220629} and \eqref{7-20220629}, we can define the following action functionals:
\begin{align*}
&\mathcal{A}_{-}(x_{-}(X,t)):=\int_0^{t^\ast}\int_{\Omega}\Big[\frac{\rho_{-}}{2}|u_{-}|^2
-\frac{c_{-}^2\rho_{-}}{2}|\mathbb{F}_{-}|^2\Big]\,dx_{-}dt=\int_0^{t^\ast}\int_{\Omega} \Big[\frac12\rho_{-0}(X)|x_{-t}|^2-\frac{c_{-}^2}{2}\rho_{-0}(X)|\widetilde{\mathbb{F}}_{-}|^2\Big]\,dXdt;\\
&\mathcal{B}_{-}(\rho_{-}):=-\int_0^{t^\ast}\int_{\Omega}\omega_{-}(\rho_{-})\,dxdt
-\frac12\int_0^{t^\ast}\iint_{\Omega\times\Omega} G(x-y)(\rho_{-}-\rho_{+})(x)(\rho_{-}-\rho_{+})(y)\,dydxdt.
\end{align*}
Then, the conservative force is
\begin{align}\label{8-20220629}
F^{-}_{conservative}=\frac{\de\mathcal{A}_{-}}{\de x_{-}}+\rho_{-}\na\frac{\de \mathcal{B}_{-}}{\de\rho_{-}},
\end{align}
where $\frac{\de \mathcal{B}_{-}}{\de\rho_{-}}$ can be looked as a coupling potential induced by the pressure and the Coulomb force due to the fact $\frac{\de\mathcal{B}_{-}}{\de x_{-}}=\rho_{-}\na\frac{\de \mathcal{B}_{-}}{\de\rho_{-}}$.

By the Least Action Principle \cite{Forster2013}, taking the variation (for any smooth $y_{-}(X,t)=\tilde{y}_{-}(x_{-}(X,t),t)$ with compact support) with respect to the flow map $x_{-}$ yields
\begin{align*}
0&=\left.\frac{d}{d\va}\right|_{\va=0}\mathcal{A}_{-}(x_{-}(X,t)+\va y_{-}(X,t))\nonumber\\
&=\left.\frac{d}{d\va}\right|_{\va=0}\int_0^{t^\ast}\int_{\Omega}\Big[\frac12\rho_{-0}(X)|x_{-t}(X,t)+\va y_{-t}(X,t)|^2-\frac{c_{-}^2}{2}\rho_{-0}(X)\left|\frac{\pa x_{-}(X,t)}{\pa X}+\va\frac{\pa y_{-}(X,t)}{\pa X}\right|^2\Big]\,dXdt\nonumber\\
&=\int_0^{t^\ast}\int_{\Omega}\big[-(\rho_{-} u_{-})_t-\diver(\rho_{-} u_{-}\otimes u_{-})+c_{-}^2\diver(\rho_{-} \mathbb{F}_{-}\mathbb{F}_{-}^T)\big]\tilde{y}_{-}\,dxdt.
\end{align*}
Since $\tilde{y}_{-}$ is arbitrary, we obtain
\begin{align}\label{9-20220629}
-&(\rho_{-} u_{-})_t-\diver(\rho_{-} u_{-}\otimes u_{-})+c_{-}^2\diver(\rho_{-} \mathbb{F}_{-}\mathbb{F}_{-}^T)=0\nonumber\\
&\Rightarrow\frac{\de\mathcal{A}}{\de x_{-}}=-(\rho_{-} u_{-})_t-\diver(\rho_{-} u_{-}\otimes u_{-})+c_{-}^2\diver(\rho_{-} \mathbb{F}_{-}\mathbb{F}_{-}^T).
\end{align}
By the Least Action Principle \cite{Forster2013} again, taking the variation (for any smooth $\tilde{v}_{-}$ with compact support) with respect to $\rho_{-}$ yields
\begin{align*}
0&=\left.\frac{d}{d\va}\right|_{\va=0}\mathcal{B}_{-}(\rho_{-}+\va\tilde{v}_{-})\nonumber\\
&=\left.\frac{d}{d\va}\right|_{\va=0}\Big[\int_0^{t^\ast}\int_{\Omega} -\omega(\rho_{-}+\va\tilde{v}_{-})\,dxdt\Big]\nonumber\\
&\quad-\left.\frac{d}{d\va}\right|_{\va=0}\Big[\frac{1}{2}\int_0^{t^\ast}\iint_{\Omega\times\Omega} G(x-y)(\rho_{-}+\va \tilde{v}_{-}-\rho_{+})(x)(\rho_{-}+\va \tilde{v}_{-}-\rho_{+})(y)\,dydxdt\Big]\nonumber\\
&=\int_0^{t^\ast}\int_{\Omega}-\omega_{-}'(\rho_{-})\tilde{v}_{-}\,dxdt
-\frac{1}{2}\int_0^{t^\ast}\iint_{\Omega\times\Omega} G(x-y)(\rho_{-}-\rho_{+})(y)\,dy\tilde{v}_{-}(x)\,dxdt\nonumber\\
&\quad-\frac{1}{2}\int_0^{t^\ast}\iint_{\Omega\times\Omega} G(x-y)(\rho_{-}-\rho_{+})(x)\tilde{v}_{-}(y)\,dydxdt\nonumber\\
&=\int_0^{t^\ast}\int_{\Omega}\left(-\omega_{-}'(\rho_{-})+\phi(x)\right)\tilde{v}_{-}(x)\,dxdt.
\end{align*}
Since $\tilde{v}_{-}$ is arbitrary, we obtain
\begin{align}\label{10-20220629}
&-\omega_{-}'(\rho_{-})+\phi(x)=0\Rightarrow\frac{\de\mathcal{B}_{-}}{\de \rho_{-}}=-\omega_{-}'(\rho_{-})+\phi(x)\nonumber\\
&\Rightarrow\rho_{-}\na\frac{\de\mathcal{B}_{-}}{\de\rho_{-}}=-\na p_{-}+\rho_{-}\na\phi,\ p_{-}:=\rho_{-}\omega_{-}'-\omega_{-}.
\end{align}
So, by \eqref{8-20220629}--\eqref{10-20220629}, we obtain
\begin{align}\label{11-20220629}
F_{conservative}^-=-(\rho_{-} u_{-})_t-\diver(\rho_{-} u_{-}\otimes u_{-})+c_{-}^2\diver(\rho_{-} \mathbb{F}_{-}\mathbb{F}_{-}^T)-\na p_{-}+\rho_{-}\na\phi.
\end{align}
By \eqref{4-20220629} and \eqref{11-20220629}, the total force balance gives
\begin{align*}
F_{conservative}^-=F_{dissipative}^-,
\end{align*}
that is,
\begin{align}\label{12-20220629}
&(\rho_{-} u_{-})_t+\diver(\rho_{-} u_{-}\otimes u_{-})-c_{-}^2\diver(\rho_{-} \mathbb{F}_{-}\mathbb{F}_{-}^T)+\na p_{-}-\rho_{-}\na\phi\nonumber\\
&\quad=\mu_{-}\De u_{-}+(\mu_{-}+\la_{-})\na\diver u_{-}-\al_{-}\rho_{-} u_{-},
\end{align}
where $p_{-}:=\rho_{-}\omega_{-}'-\omega_{-}$.

Similarly, we can derive
\begin{align}\label{13-20220629}
&(\rho_{+} u_{+})_t+\diver(\rho_{+} u_{+}\otimes u_{+})-c_{+}^2\diver(\rho_{+} \mathbb{F}_{+}\mathbb{F}_{+}^T)+\na p_{+}+\rho_{+}\na\phi\nonumber\\
&\quad=\mu_{+}\De u_{+}+(\mu_{+}+\la_{+})\na\diver u_{+}-\al_{+}\rho_{+} u_{+},
\end{align}
where $p_{+}:=\rho_{+}\omega_{+}'-\omega_{+}$.

Now, we collect \eqref{1-20220629}, \eqref{2-20220629}, \eqref{12-20220629} and \eqref{13-20220629} to obtain a viscoelastic two-fluid system
\begin{align}\label{14-20220629}
\begin{cases}
\rho_{-t} +\diver (\rho_{-} u_{-})=0,\\
(\rho_{-} u_{-})_t+\diver(\rho_{-} u_{-}\otimes u_{-})+\na p_{-}\\
\quad=\mu_{-}\De u_{-}+(\mu_{-}+\la_{-})\na\diver u_{-}+c_{-}^2\diver(\rho_{-} \mathbb{F}_{-}\mathbb{F}_{-}^T)+\rho_{-}\na\phi-\al_{-}\rho_{-} u_{-},\\
\mathbb{F}_{-t}+u_{-}\cdot\na \mathbb{F}_{-}=\na u_{-}\mathbb{F}_{-},\\
\rho_{+t} +\diver (\rho_{+} u_{+})=0,\\
(\rho_{+} u_{+})_t+\diver(\rho_{+} u_{+}\otimes u_{+})+\na p_{+}\\
\quad=\mu_{+}\De u_{+}+(\mu_{+}+\la_{+})\na\diver u_{+}+c_{+}^2\diver(\rho_{+} \mathbb{F}_{+}\mathbb{F}_{+}^T)-\rho_{+}\na\phi-\al_{+}\rho_{+} u_{+},\\
\mathbb{F}_{+t}+u_{+}\cdot\na \mathbb{F}_{+}=\na u_{+}\mathbb{F}_{+},\\
\De \phi=\rho_{-}-\rho_{+}.
\end{cases}
\end{align}
In fact, we can go back the energy dissipation law \eqref{3-20220629} by multiplying
the first six equations in \eqref{14-20220629} by $\omega_{-}'(\rho_{-}),u_{-},c_{-}^2\rho_{-} \mathbb{F}_{-},\omega_{+}'(\rho_{+}),u_{+},c_{+}^2\rho_{+}\mathbb{F}_{+}$, respectively, summing them up and then integrating over $\Omega$.

Next, we analyze the dynamics of the positive fluid. When the positive fluid becomes a steady state, the equations for the positive fluid become
\begin{align}\label{1-20220706}
\begin{cases}
\diver (\rho_{+} u_{+})=0,\\
\rho_{+} u_{+}\cdot\na u_{+}-c_{+}^2\diver(\rho_{+} \mathbb{F}_{+}\mathbb{F}_{+}^T)+\na p_{+}+\rho_{+}\na\phi=\mu_{+}\De u_{+}+(\mu_{+}+\la_{+})\na\diver u_{+}-\al_{+}\rho_{+} u_{+},\\
u_{+}\cdot\na \mathbb{F}_{+}=\na u_{+}\mathbb{F}_{+}.
\end{cases}
\end{align}
Multiplying Eq. $\eqref{1-20220706}_2$, Eq. $\eqref{1-20220706}_3$ by $u_{+}$, $c_{+}^2\rho_{+}\mathbb{F}_{+}$, respectively, summing them up and integrating over $\Omega$ by parts, we deduce
\begin{align*}
\int_{\Omega}\Big[\mu_{+}|\na u_{+}|^2+(\mu_{+}+\la_{+})|\diver u_{+}|^2+\al_{+}\rho_{+}|u_{+}|^2\Big]\,dx=0,
\end{align*}
which together with \eqref{0-20220629} implies
\begin{align*}
u_{+}=0,\quad \mathbb{F}_{+}=\mathbb{I}.
\end{align*}
Thus the stationary system \eqref{1-20220706} is reduced to a single equation
\begin{align}\label{2-20220706}
\na p_{+}(\rho_{+})=-\rho_{+}\na\phi.
\end{align}
It is easy to check that the equation \eqref{2-20220706} has a constant solution $\rho_{+}\equiv\bar\rho>0$ and $\phi_{+}\equiv0$. Under certain conditions, the equation \eqref{2-20220706} also has a nonconstant solution. Under the simple case of $p_{+}(\rho_{+})=\rho_{+}$, one can derive
\begin{align*}
\rho_{+}=e^{-\phi},
\end{align*}
which gives a Boltzmann distribution for positive charged particles. Thus, the system \eqref{14-20220629} is reduced to
\begin{align*}
\begin{cases}
\rho_{-t} +\diver (\rho_{-} u_{-})=0,\\
(\rho_{-} u_{-})_t+\diver(\rho_{-} u_{-}\otimes u_{-})-c_{-}^2\diver(\rho_{-} \mathbb{F}_{-}\mathbb{F}_{-}^T)+\na p_{-}-\rho_{-}\na\phi=\mu_{-}\De u_{-}+(\mu_{-}+\la_{-})\na\diver u_{-}-\al_{-}\rho_{-} u_{-},\\
\mathbb{F}_{-t}+u_{-}\cdot\na \mathbb{F}_{-}=\na u_{-}\mathbb{F}_{-},\\
\De \phi=\rho_{-}-\rho_{+},\quad \rho_{+}=\bar\rho\ \mbox{or}\ e^{-\phi},
\end{cases}
\end{align*}
which is a closed system. This is exactly the system \eqref{1.1} by removing the subscript $-$.

\section*{Acknowledgements}

This work was partially supported by National Key R\&D Program of China (No. 2021YFA1002900), Guangdong Provincial Pearl River Talents Program (No. 2017GC010407), Guangdong Province Basic and Applied Basic Research Fund (No. 2021A1515010235), Guangzhou City Basic and Applied Basic Research Fund (No. 202102020436) and the NSF of China (Nos. 11701264 and 11971179).

\bigskip

{\bf Data Availability:} The datasets generated during and/or analysed during the current study are available from the corresponding author on reasonable request.

\bibliography{bib}

\end{document}